\documentclass[12pt,reqno]{amsart}

\usepackage{amsfonts, amsmath, amssymb, amsthm}
\usepackage{anysize}
\usepackage{mathtools}
\marginsize{2cm}{2cm}{1.5cm}{1.5cm}

\usepackage[utf8]{inputenc}
\usepackage[english]{babel}
\usepackage{cmap}
\usepackage{enumerate}

\usepackage{xcolor}
\usepackage{hyperref}
\hypersetup{
	colorlinks   = true, 
	urlcolor     = blue, 
	linkcolor    = blue, 
	citecolor    = red 
}
\newcommand{\Href}[2]{\hyperref[#2]{#1~\ref{#2}}}

\theoremstyle{theorem}
\newtheorem{thm}{Theorem}[section]
\newtheorem{prp}{Proposition}[section]
\newtheorem{lem}{Lemma}[section]
\newtheorem{cor}{Corollary}[section]

\theoremstyle{definition}
\newtheorem{dfn}{Definition}[section]
\newtheorem{rem}[dfn]{Remark}

\newcommand{\st}{:\;}
\newcommand{\norm}[1]{\left\|#1\right\|}
\newcommand{\enorm}[1]{\left|#1\right|}
\newcommand{\iprod}[2]{\left\langle#1,#2\right\rangle}
\providecommand{\parenth}[1]{\left(#1\right)}
\providecommand{\braces}[1]{\left\{#1\right\}}

\newcommand{\littleo}[1]{o \! \parenth{#1}}

\newcommand{\EE}{{\mathbb E}}

\newcommand{\PP}{{\mathbb P}}
\providecommand{\abs}[1]{\lvert#1\rvert}
\providecommand{\card}[1]{\lvert#1\rvert}

\def\N{{\mathbb N}}
\def\R{{\mathbb R}}

\newcommand{\conv}{\mathrm{conv}}
\def\diam{\mathop{\rm diam}}

\newcommand{\centroid}[1]{\operatorname{c} \left(#1 \right)}
\newcommand{\ball}[1]{\mathbf{B}^{#1}}

\newcommand{\dist}[2]{\operatorname{dist}\!\left( #1, #2 \right)}

\def\phi{\varphi}
\def\epsilon{\varepsilon}

\newtheorem{modelex}[dfn]{Model example}
\providecommand{\eps}{\varepsilon}

\providecommand{\vrad}{\operatorname{Radon}}
\providecommand{\vtv}{\operatorname{Tv}}
\providecommand{\vsel}{\operatorname{Sel}}
\providecommand{\vnet}{\operatorname{WeakNet}}
\providecommand{\vhelly}{\operatorname{Helly}}
\providecommand{\ccar}{\operatorname{CCar}}
\providecommand{\KG}{\operatorname{KG}}

\providecommand{\Ip}{\operatorname{I}}

\title[Optimality of no-dimensional bounds]{Optimality of no-dimensional bounds in Banach spaces}

\author{Grigory Ivanov}
\address{Grigory~Ivanov: Pontif\'icia Universidade Cat\'olica do Rio de Janeiro\\
Departamento de Matem\'atica\\
Rua Marqu\^es de S\~ao Vicente, 225\\
Edif\'{\i}cio Cardeal Leme, sala 862\\
22451-900 G\'avea, Rio de Janeiro, Brazil}
\email{\href{mailto:grimivanov@gmail.com}{grimivanov@gmail.com}}
\thanks{G.I. is supported by Projeto Paz and Coordenacao de Aperfeicoamento de Pessoal de Nivel Superior - Brasil (CAPES) - 23038.015548/2016-06}

\author{Vladimir Kadets}
\address{Vladimir~Kadets:  School of Mathematical Sciences, Holon Institute of Technology (Israel) \\
\href{http://orcid.org/0000-0002-5606-2679}{ORCID: \texttt{0000-0002-5606-2679}}}
\email{\href{kadetsv@hit.ac.il}{kadetsv@hit.ac.il}}
\thanks{V.K. is partially supported by the KAMEA program administered by the Ministry of Absorption, Israel and by MICIU/AEI/10.13039/501100011033 and ERDF/EU through the grant PID2021-122126NB-C31}

\keywords{Helly theorem, Tverberg theorem, 
weak $\epsilon$-net, no-dimensional convexity,  Rademacher type, Maurey's lemma, Banach space theory}
\subjclass[2020]{52A05  (primary); 52A35, 52A37, 46B07, 46B09, 46B20}
\date{\today}

\begin{document}

\begin{abstract}
We discuss lower-bound constructions for several no-dimensional theorems of combinatorial geometry in
Banach spaces.  The common mechanism is the Maurey--Pisier theorem: the
supremal Rademacher type of a Banach space forces finite-dimensional
\(\ell_p\)-structures, and standard-coordinate configurations in these model
spaces give lower bounds for the error terms.  For the Helly approximation
property the relevant type is the type of the dual space.  For colorful Radon,
colorful Tverberg, selection, and weak \(\eps\)-net statements the relevant type
is the type of the original space.

We show that the powers appearing in the no-dimensional Helly, Radon, 
Tverberg, and selection estimates are optimal at the supremal-type exponent.
If the supremal type is attained, the known upper estimates coming from the
corresponding type inequalities have the best possible order.  We also include
endpoint statements for spaces of trivial type.  In this case the error terms
in the Helly, Radon, Tverberg, and selection statements cannot tend to zero.

Finally, we prove an endpoint obstruction for no-dimensional weak
\(\eps\)-nets in spaces of trivial type.  For every fixed cardinality bound, one
can find a finite set in the unit ball for which no approximate weak
\(\eps\)-net of that size exists below a fixed positive radius.  The proof
combines the simplex example in \(\ell_1^N\), the Lov\'asz theorem on the chromatic number of Kneser's graph, and
finite representability of \(\ell_1^N\) in spaces of trivial type.
\end{abstract}

\maketitle

\section{Introduction}

Classical results of combinatorial convexity, such as the theorems of
Carath\'eodory \cite{caratheodory1911variabilitatsbereich}, Helly
\cite{helly1923mengen}, and Tverberg \cite{tverberg1966generalization}, describe intersection and containment
properties of convex sets in a finite-dimensional vector space.
They form one of the basic parts of combinatorial convexity; see, for example,
B\'ar\'any's lectures \cite{barany2021combinatorial} and Matou\v{s}ek's book
\cite{matousek2013lectures}.  A common feature of the classical statements is
that the dimension of the ambient space enters explicitly.  In fact, these
results can be used, in different ways, to detect the dimension of the ambient
space.

A systematic study of no-dimensional, or approximate dimension-free, analogues
of these theorems was initiated by Adiprasito, B\'ar\'any, Mustafa, and Terpai
\cite{adiprasito2019theorems,adiprasito2020theorems}.  The basic principle is
to replace a dimension-dependent exact conclusion by an approximation estimate
which does not contain the dimension.  For instance, the classical
Carath\'eodory theorem says that if a point belongs to the convex hull of a set
in \(\R^d\), then it is a convex combination of at most \(d + 1\) points.  The
corresponding no-dimensional version says that if \(P\) is a bounded set in a
Euclidean space, then every point of \(\conv P\) is close to the convex hull of
at most \(k\) points of \(P\), with an error of
\[
        \frac{2\diam P}{\sqrt{k}}.
\]

This folklore statement has no dependence on the dimension of the Euclidean space, but it
only gives an approximate conclusion.  This lack of dependence on the ambient
dimension motivates the name.

There are several reasons to study such statements beyond the Euclidean setting.
First, the no-dimensional Carath\'eodory theorem is closely related to empirical
approximation by averages, a standard tool in the local theory of Banach spaces;
for instance, such approximation results are used in estimates for covering
numbers \cite[Section~5.3]{artstein2021asymptotic}, and related probabilistic
covering estimates are discussed in \cite{vershynin2018high}.  Second,
approximate Carath\'eodory-type statements also appear naturally in algorithms,
for example in Barman's work on approximate Nash equilibria and dense bipartite
subgraphs \cite{barman2015approximating}.  The Radon-type statements considered
below are connected with scale-sensitive and shattering dimensions of classes of
functions and linear functionals
\cite{alon1997scale,gurvits1997note,mendelson2004shattering}.
Further related combinatorial problems were formulated in \cite{polyanskii2024},
and links with quantum information theory were discussed in
\cite{ivanov2026nodimensionalresults}.

We now describe the properties studied in the paper. 

We start with the most basic statement, the no-dimensional Carath\'eodory
theorem.
 For a  set
\(P\subset X\), define the $k$-convex hull by
\[
        \conv_k P
        =
        \braces{
        \sum\limits_{i  =1}^s \lambda_i x_i
        \st
        1 \le s \le k,\ x_i \in P,\ \lambda_i \ge 0,
        \sum\limits_{i  =1}^s \lambda_i = 1
        }.
\]
We use $\diam P$ and $\conv P$ to denote the diameter and the convex hull of $P,$ respectively.

The no-dimensional Carath\'eodory error is
\[
        \operatorname{Car}_X(k)
        =
        \sup_P \sup_{a \in \conv P}
                \frac{\dist{a}{\conv_k P}}{\diam P},
\]
where the  supremum is taken over  sets \(P\subset X\) with positive
diameter.  We will say that  a \emph{no-dimensional Carath\'eodory theorem holds in} \(X\) if
\(\operatorname{Car}_X(k)\to 0\) as \(k \to \infty\).

In Banach space theory this statement is usually referred to as Maurey's lemma \cite{pisier1980remarques}. 

Let us recall the relevant Banach space notions.  
For \(n \in \N\), we put
\(
        [n]
        =
        \braces{1,\ldots,n}.
\)
Let
\((\epsilon_i)_{i = 1}^m\) be independent Rademacher variables.  A Banach space
\(X\) has \emph{Rademacher type} \(p \in [1,2]\) if there exists a constant
\(T_p(X) \) such that, for every finite sequence
\(x_1,\dots,x_m\in X\),
\[
        \parenth{\EE \norm{\sum\limits_{i \in [m]} \epsilon_i x_i}_X^p}^{1/p}
        \le
        T_p(X)
        \parenth{\sum\limits_{i \in [m]} \norm{x_i}_X^p}^{1/p}.
\]
We will use $T_p(X)$ to denote the least constant that appears in this inequality for $X$ of type $p.$

Every Banach space has type \(1\), and no Banach space has type strictly larger
than \(2\).  We say that \(X\) has \emph{non-trivial} type if it has type \(p\) for
some \(p > 1\).

We shall also use the weaker deterministic notion of infratype.  The space \(X\)
has \emph{infratype} \(p\) with constant \(\Ip_p(X)\) if, for every finite sequence
\(x_1,\ldots,x_m\in X\),
\[
        \min_{\theta_i=\pm1}
        \norm{\sum\limits_{i\in[m]}\theta_i x_i}_X
        \le
        \Ip_p(X)
        \parenth{\sum\limits_{i\in[m]}\norm{x_i}_X^p}^{1/p}.
\]
Type \(p\) implies infratype \(p\), with \(\Ip_p(X)\le T_p(X)\).

Maurey's lemma says that if \(X\)
has Rademacher type \(p > 1\), then
\begin{equation}
\label{eq:Car_number_Maurey_lemma_bound}
        \operatorname{Car}_X(k)
        \le
        T_p(X)\, k^{-1 + \frac{1}{p}}.
\end{equation}

To be more precise, a direct application of Maurey's lemma gives an additional factor of $2.$ To avoid misunderstandings, we  provide a short proof of this inequality in \Href{Section}{sec:caratheodory_opt}. 

The related question of when averages of finite sets admit dimension-free convexification was studied by Artstein and Kadets \cite{artstein2025b}.  Their
formulation is stated for approximation by uniform averages, rather than by
arbitrary convex combinations of at most \(k\) points, but it gives the same qualitative threshold. Namely, a 
no-dimensional Carath\'eodory theorem holds in a Banach space  if and only if the space has  non-trivial Rademacher type.  

We do not claim novelty for the 
 characterization of spaces in which the no-dimensional Carath\'eodory theorem holds; for us it serves as the first member of the
chain of no-dimensional convexity properties.

In the classical setting of combinatorial convexity, the main results are
connected by a standard chain of ideas
\[
        \text{Carath\'eodory}
        \Rightarrow
        \text{Radon}
        \Rightarrow
        \text{Tverberg}
        \Rightarrow
        \text{selection lemma}
        \Rightarrow
        \text{weak } \eps\text{-nets};
\]
see, for instance, Matou\v{s}ek's book \cite{matousek2013lectures} and  \cite{alon1992point}.  The same
strategy underlies the Euclidean no-dimensional theory of
\cite{adiprasito2019theorems,adiprasito2020theorems}.  It was later extended to
Banach spaces of non-trivial type in \cite{ivanov2021no}.  We now introduce the
corresponding no-dimensional properties in the form used in this paper.

Let \(Z_1,\ldots,Z_r\subset X\) be pairwise disjoint
sets, each of cardinality \(2n\).  Put
\[
        S = \bigcup_{j \in [r]} Z_j,
        \qquad
        D = \max_{j\in[r]} \diam Z_j.
\]
A \emph{balanced Radon split} is a partition \(S = Q_0\sqcup Q_1\) such that
\[
        \card{Q_0\cap Z_j}
        =
        \card{Q_1\cap Z_j}
        =n        
        \qquad\text{for every } j\in[r].
\]
We define the normalized Radon separation by
\[
        \vrad_X(2n,r)
        =
        \sup_{Z_1,\ldots,Z_r}
        \inf_{S = Q_0\sqcup Q_1}
        \frac{\dist{\conv Q_0}{\conv Q_1}}{D},
\]
where the infimum is over all balanced Radon splits.
We say that  a \emph{no-dimensional colorful
Radon theorem holds in} \(X\) if \(\vrad_X(2n,r)\to0\) as \(nr\to\infty\).

For the colorful Tverberg statement, let \(Z_1,\ldots,Z_r\subset X\) be pairwise
disjoint sets, each of cardinality \(k\).  A colorful \(k\)-partition is a
partition
\[
        \bigcup_{j \in [r]} Z_j
        =
        S_1\sqcup\cdots\sqcup S_k
\]
such that \(\card{S_i\cap Z_j} = 1\) for every \(i\in[k]\) and every
\(j\in[r]\).  We define the colorful Tverberg error by
\[
        \vtv_X(r,k)
        =
        \sup_{Z_1,\ldots,Z_r}
        \inf_{x,\, S = S_1\sqcup\cdots\sqcup S_k}
        \frac{\max_{i\in[k]} \dist{x}{\conv S_i}}{D},
\]
where \(D = \max_{j\in[r]}\diam Z_j\), and the infimum is over all points
\(x\in X\) and all colorful \(k\)-partitions. 
We say that a \emph{no-dimensional colorful
Tverberg theorem holds in} \(X\) if, for every \(k\ge2\),
\(
        \vtv_X(r,k) \to 0
\)
as
\(        
   r\to\infty.
\)

We shall also use a normalized form of the no-dimensional selection lemma.
For \(0 < \theta < 1\), denote by \(\vsel_X(r,\theta)\) the least number
\(\rho \ge 0\) with the following property.  For every finite set
\(P \subset X\) of size at least \(r\) and positive diameter, there is a point
\(x \in X\) such that the ball of radius \(\rho \diam(P)\)
centered at \(x\)
meets the convex hulls of at least a \(\theta\)-fraction of all \(r\)-point
subsets of \(P\).
We will say that a \emph{no-dimensional selection lemma holds in} \(X\) if
there exists a function \(0 < \theta_r < 1\) such that
\[
        \vsel_X(r,\theta_r)
        \to
        0
        \qquad\text{as } r \to \infty.
\]
The function \(\theta_r\) is part of the conclusion: it may depend on \(r\), but
not on \(P\) or on any ambient dimension.  Geometrically, this means that for
every large enough \(r\) and every finite set \(P\), one can find a point
\(x \in X\) such that a ball centered at \(x\), with radius \(\littleo{\diam P}\),
pierces the convex hulls of a prescribed positive fraction \(\theta_r\) of all \(r\)-point subsets of \(P\).

Finally, for weak \(\eps\)-nets, let
\[
        \vnet_X(\eps,M)
        =
        \sup_P
        \inf_{\substack{F \subset X\\ \card{F} \le M}}
        \sup_{\substack{Y \subset P\\ \card{Y} \ge \eps\card{P}}}
        \frac{\dist{F}{\conv Y}}{\diam P},
\]
where the first supremum is over all finite sets \(P \subset X\) with positive
diameter.  

We will say that a \emph{no-dimensional weak \(\eps\)-net theorem
holds in} \(X\) if, for every \(0 < \eps < 1\) and every \(\eta > 0\), there is
an integer \(M = M_X(\eps,\eta)\), independent of \(P\)  such that
\[
        \vnet_X(\eps,M)
        \le
        \eta .
\]

Geometrically, the set \(F\) is an approximate weak \(\eps\)-net: it is allowed
to lie anywhere in \(X\), and every subset \(Y \subset P\) with at least an
\(\eps\)-fraction of the points of \(P\) must have its convex hull close to at
least one point of \(F\).  Thus, the theorem asks for a bounded number of test
points which pierce, up to a vanishing error, all convex hulls of large subsets
of \(P\).  

It was shown in \cite{ivanov2021no}  that the  no-dimensional 
Radon, Tverberg, and weak $\epsilon$-net theorems, as well as the selection lemma hold in  Banach spaces of non-trivial type. Yet, unlike the no-dimensional Carath\'eodory theorem, no complete characterization was known for any of these results.
See the problems formulated in Section 5 of  \cite{polyanskii2024}  and Conjecture 6.3 of \cite{Barabanshchikova2026}.

We also need the Helly counterpart.  For a Banach space \(X\) and \(k \in \N\),
let \(\vhelly_X(k)\) be the infimum of all \(\alpha > 0\) with the following
property: for every finite family \(\mathcal F\) of convex subsets of
\(\ball{}_{X}\), if every subfamily of \(\mathcal F\) of size at most \(k\) has
non-empty intersection, then there is a point \(x \in X\) such that
\[
        \dist{x}{C}
        \le
        \alpha
        \qquad\text{for every } C \in \mathcal F.
\]
Equivalently, the \(\alpha\)-neighborhoods of all members of \(\mathcal F\) have
a common point.  We will say that a \emph{no-dimensional Helly theorem holds
in} \(X\) if
\[
        \vhelly_X(k)
        \to
        0
        \qquad\text{as } k \to \infty.
\]
Geometrically, this asks whether exact intersections of all small subfamilies
force an approximate global intersection after enlarging all sets by a radius
which tends to zero with \(k\). 
Earlier no-dimensional Helly estimates were obtained for uniformly convex spaces
\cite{Ivanov2025nodimHelly}, and the full characterization was later proved in
\cite{ivanov2026banachspaceshellyapproximation}.  Namely, a Banach space \(X\)
has the Helly approximation property if and only if \(X\) has non-trivial
Rademacher type. 
Let us emphasize that the Helly assertion is dual in nature.  The properties
from Carath\'eodory through weak \(\eps\)-nets are governed by the type of the
space itself.  The Helly approximation property of a space \(Y\) is governed by
the type of \(Y^*\). This is compatible with the preceding list because non-trivial type is self-dual:  \(X^*\) has non-trivial type if and only if 
\(X\) has non-trivial type.
It follows from the results of Giesy \cite{giesy1966convexity}, who proved the equivalence of the so-called \(B\)-convexity of
 \(X\) and \(X^*\), and of Maurey and Pisier \cite{maurey1976series}, who proved the equivalence between \(B\)-convexity and non-trivial type.

The first main message of the paper is qualitative.  The following theorem
collects the positive results cited above with the counterexamples proved in the
present paper.

\begin{thm}
\label{thm:intro-qualitative}
Let \(X\) be a Banach space.  The following assertions are
equivalent:
\begin{enumerate}[(i)]
\item \(X\) has non-trivial Rademacher type;
\item \(X\) has non-trivial infratype;
\item a no-dimensional Carath\'eodory theorem holds in \(X\);
\item a no-dimensional colorful Radon theorem holds in \(X\);
\item a no-dimensional colorful Tverberg theorem holds in \(X\);
\item a no-dimensional selection lemma holds in \(X\);
\item a no-dimensional weak \(\eps\)-net theorem holds in \(X\);
\item a no-dimensional Helly theorem holds in \(X\).
\end{enumerate}
\end{thm}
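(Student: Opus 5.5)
The plan is to prove the equivalences by showing that (i) implies every other assertion, and that the failure of (i) makes every other assertion fail.

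\emph{Positive direction.} (i)$\Rightarrow$(ii) is immediate, since type \(p\) implies infratype \(p\) with \(\Ip_p(X)\le T_p(X)\). For the converse (ii)$\Rightarrow$(i), I argue by contraposition. If \(X\) has trivial type, the Maurey--Pisier theorem shows that \(X\) contains \(\ell_1^N\)'s uniformly, that is, \((1+\delta)\)-isomorphic copies for every \(N\). Take \(x_i=e_i\), the unit vector basis, in such a copy. Then every signed sum \(\sum\theta_i x_i\) has norm at least \(N/(1+\delta)\), whereas \((\sum\norm{x_i}^p)^{1/p}\approx N^{1/p}\). Letting \(N\to\infty\) rules out infratype \(p>1\).

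The other positive implications come from the literature quoted in the introduction:
\begin{itemize}
\item (i)$\Rightarrow$(iii) is Maurey's lemma \eqref{eq:Car_number_Maurey_lemma_bound};
\item (i)$\Rightarrow$(iv),(v),(vi),(vii) are the results of \cite{ivanov2021no};
\item (i)$\Rightarrow$(viii) is \cite{ivanov2026banachspaceshellyapproximation}.
\end{itemize}
For (viii), I would also note that non-trivial type passes to \(X^*\) by Giesy and Maurey--Pisier, in case the cited argument is phrased in terms of the dual.

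\emph{Negative direction.} Assume \(X\) has trivial type. Every property is normalized by diameter and is stable under \((1+\delta)\)-isomorphic embeddings, up to a factor \(1+\delta\) in the error. By finite representability it is therefore enough to find, for each property, a configuration in \(\ell_1^N\) (or in \(\ell_\infty^N\), for Helly) whose error stays above a fixed constant, uniformly in the parameters.

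\begin{itemize}
\item \textbf{Carath\'eodory and Radon.} Take \(P=\{e_1,\dots,e_N\}\subset\ell_1^N\), which has diameter \(2\). The barycenter is at distance about \(2(1-k/N)\) from \(\conv_k P\). For Radon, use colour classes made of disjoint sets of standard vectors. Any two convex hulls of disjoint sets of basis vectors are at \(\ell_1\)-distance exactly \(2\), so \(\vrad\) stays bounded below.
\item \textbf{Tverberg.} Use the same distinct coordinate vectors. The \(S_i\) have disjoint supports, and for disjointly supported convex hulls \(\max_i\dist{x}{\conv S_i}\ge 1\) by the triangle inequality.
\item \textbf{Selection.} Take \(P\) to be \(N\) basis vectors with \(N\gg r\). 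A point \(x\) lying near the convex hull of an \(r\)-set \(Y\) must carry \(\ell_1\)-mass close to \(1\) on the coordinates of \(Y\). A fixed \(x\) can do this only for a vanishing fraction of \(r\)-sets once \(N\to\infty\), because \(x\) has mass at least \(1-2\rho\) on \(Y\) while \(\norm{x}\) is bounded.
\item \textbf{Helly.} Work in \(\ell_\infty^N=(\ell_1^N)^*\), which embeds almost isometrically in \(X\) whenever \(X\) has trivial type, since then \(X^*\) does too. Use the \(N\) faces \(C_j=\{x\in\ball{}: x_j=1\}\), or more precisely sets of the form \(\{x_j\ge 1\}\cap\{x_i\le -1\text{ for } i\in A_j\}\), chosen so that any \(k\) of them intersect but no point is close to all of them. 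I would take the standard construction of \cite{ivanov2026banachspaceshellyapproximation} and adapt it.
\end{itemize}

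\emph{Main obstacle: weak \(\eps\)-nets.} This is the step I expect to be hardest, since \(M\) points may be placed arbitrarily. Let \(P\) be \(N\) basis vectors of \(\ell_1^N\). A point \(f\) is within \(\rho\) of \(\conv Y\) only if \(f\) has mass at least \(1-\rho\) on \(Y\). Consequently each \(f\) can serve only sets \(Y\) meeting a fixed small set \(H_f\) of ``heavy'' coordinates, namely those of mass at least some threshold depending on \(M\). To cover all \(\eps N\)-subsets with \(M\) points, the \(M\) heavy sets would have to form a hitting family for all \(\eps N\)-sets, up to the discretization. Colouring \(\eps N\)-subsets by which \(f\) serves them yields an \(M\)-colouring of a Kneser graph. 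By Lov\'asz's theorem, this forces two disjoint \(\eps N\)-sets \(Y,Y'\) served by the same \(f\), once \(N-2\eps N+2>M\). But \(f\) cannot have mass above \(1/2+\) on two disjoint supports, which contradicts the requirement for \(\rho<1/2\). Choosing \(\eps<1/2\) and letting \(N\) grow then gives the obstruction for every \(M\), and transferring to \(X\) by finite representability completes the proof.
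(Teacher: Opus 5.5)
You have the paper's overall architecture: positive directions by citation, negative directions by $\ell_1^N$ coordinate configurations pulled into $X$ via Maurey--Pisier, and Lov\'asz--Kneser for weak nets. Your direct proof of (ii)$\Rightarrow$(i) via $\ell_1^N$ is also fine. The Helly step, however, does not work as written.

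\textbf{The Helly step.} Your claim that $\ell_\infty^N$ embeds almost isometrically in every space of trivial type is false. For example, $\ell_1$ has trivial type but cotype $2$, so it contains no uniform copies of $\ell_\infty^N$.

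What trivial type actually gives you, via trivial type of $X^*$, is $\ell_1^N\hookrightarrow X^*$. Hence $\ell_\infty^N=(\ell_1^N)^*$ is an almost isometric \emph{quotient} of $X$, not a subspace. Helly lower bounds pass from a quotient $Q\colon X\to W$ to $X$: pull the sets back as $Q^{-1}(\lambda C_i)\cap\ball{}_{X}$, and push any approximate common point forward by the norm-one map $Q$. This is Lemma~\ref{lem:dual-subspace-quotient-helly}. Even for a genuine subspace you would have to control approximate common points lying outside it; injectivity of $\ell_\infty^N$ would do this, but the embedding is not available.

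You also never produce an actual endpoint configuration. The faces $\{x_j=1\}$ of $\ball{}_{\ell_\infty^N}$ all contain $(1,\dots,1)$, and your refined sets are not specified. The coordinate model of Lemma~\ref{lem:lp-dual-model-helly} degenerates to the trivial bound $0$ at $p=1$. The paper closes this step with the endpoint construction of \cite[Lemma 4.2]{ivanov2026banachspaceshellyapproximation}. That lemma produces $K_1,\dots,K_{2k}\subset\ball{}_{X}$ with $k$-wise intersections and Helly error at least $k/(2k-1)-\eta$ whenever $X^*$ has trivial type. You should either cite it directly or supply the quotient transfer together with an explicit $\ell_\infty^N$ obstruction.

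\textbf{The transfer claim.} You assert that every property transfers along $(1+\delta)$-embeddings with only a $(1+\delta)$ loss. This is not automatic, because the Tverberg center, the selection piercing point and the net points may lie outside $T(\ell_1^N)$. There ``mass on coordinates'' has no meaning, and no good projection onto $T(\ell_1^N)$ need exist.

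For Tverberg and weak nets, phrase the contradiction as the paper does (Lemma~\ref{lem:separated-transfer}). Use only $\dist{T(\Delta_A)}{T(\Delta_B)}\ge 2$ for disjoint $A,B$ together with the triangle inequality, which does not care where the point lies. Your weak-net argument is otherwise the paper's Kneser-colouring argument; the discussion of heavy coordinates and hitting families is unnecessary.

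Your Markov-type selection argument (mass at least $1-2\rho$ on $Y$, bounded total mass, hence an $O(r/N)$ fraction) genuinely needs $x\in\ell_1^N$. A transfer-proof version of the same elementary idea works as follows. If the piercing ball has radius $\alpha<1$ and meets both $T(\Delta_Y)$ and $T(\Delta_{Y_0})$, then $\dist{T(\Delta_Y)}{T(\Delta_{Y_0})}\le 2\alpha<2$, so $Y\cap Y_0\neq\emptyset$. The served family therefore lies in the star of a fixed $Y_0$, and its fraction is at most $r^2/N$. This avoids Erd\H{o}s--Ko--Rado, which the paper uses, at the cost of taking $N$ larger.
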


We believe that the proof of the equivalence for the
no-dimensional weak \(\eps\)-net theorem  is of independent  interest.   We are unaware of a proof that does not use the topological method in combinatorics \cite{Matousek_Using_BU}.

The second main result is quantitative.  The upper bounds in Maurey's lemma \cite{pisier1980remarques}
and in the no-dimensional Carath\'eodory-type results 
\cite{ivanov2021no} have the following form: if
\(X\) has type \(p>1\), then
\[
        \operatorname{Car}_X(k)
        \le
        T_p(X)\, k^{-1 + \frac{1}{p}},
\quad
        \vrad_X(2n,r)
        \le
        T_p(X)\, (nr)^{-1 + \frac{1}{p}},
\]
\[
        \vtv_X(r,k)
        \le
         \frac{2^{\frac{1}{p}}}{1 - 2^{-1 +\frac{1}{p}}}
          T_p(X)\, r^{-1 + \frac{1}{p}},
\]
and the bound on the radius in the selection lemma is provided for $\theta_r = r^{-r}$
\[
        \vsel_X(r,r^{-r})
        \le
  \parenth{
   \frac{2^{\frac{1}{p}}}{1 - 2^{-1 +\frac{1}{p}}}
    + 1 }
          T_p(X)\,      
       r^{-1 + \frac{1}{p}}.
\]
Similarly, if \(X^*\) has type \(p>1\), then the Helly approximation theorem of
\cite{ivanov2026banachspaceshellyapproximation} gives
\[
        \vhelly_X(k) 
        \le
        6 T_p(X^*)\, k^{-1 + \frac{1}{p}}.
\]
The constants here depend on the corresponding type constants.  We prove that
the powers of \(k\), \(nr\), and \(r\) cannot be improved at the supremal-type
exponent.

The \emph{supremal type} of a Banach space is the supremum of all exponents
\(p\in[1,2]\) for which the space has Rademacher type \(p\).  The \emph{supremal
infratype} is defined analogously, with infratype in place of type.  A useful
point for the present paper is that these two suprema coincide \cite[Theorem 2.1]{maurey1976series}.

\begin{thm}[Optimality at the supremal type]
\label{thm:intro-quantitative}
Let \(X\) be an infinite-dimensional Banach space, and let \(p_X\) be its supremal
type.  Then the following lower bounds hold:
\[
        \operatorname{Car}_X(k)
        \ge
        2^{-1/p_X} k^{-1 + \frac{1}{p_X}},
\qquad
        \vrad_X(2n,r)
        \ge
        (nr)^{-1 + \frac{1}{p_X}},
        \qquad
        \vtv_X(r,k)
        \ge
        \frac{1}{2}r^{-1 + \frac{1}{p_X}},
\]
and, for every \(0 < \theta < 1\),
\[
        \vsel_X(r,\theta)
        \ge
        \frac{1}{2}r^{-1 + \frac{1}{p_X}}.
\]
Consequently, if \(X\) has type \(p_X\), then the  upper
bounds for the no-dimensional Carath\'eodory, colorful Radon, and colorful Tverberg theorems
 have the optimal order; the order of the radius in the no-dimensional selection lemma is optimal as well. 
\end{thm}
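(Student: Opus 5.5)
The plan is to use the Maurey--Pisier theorem: for an infinite-dimensional \(X\) with supremal type \(p_X\), the space \(\ell_{p_X}^N\) is finitely representable in \(X\) for every \(N\). So for every \(N\) and \(\delta>0\) there are \(y_1,\dots,y_N\in X\) with
\[
(1+\delta)^{-1}\norm{a}_{p_X}\le\norm{\sum a_i y_i}_X\le\norm{a}_{p_X}.
\]
All the quantities \(\operatorname{Car}_X,\vrad_X,\vtv_X,\vsel_X\) are suprema over finite configurations, and they are stable under \((1+\delta)\)-isomorphic embeddings up to a factor \((1+\delta)^{\pm 2}\). It therefore suffices to exhibit the lower bounds in \(\ell_p^N\) with \(p=p_X\), using standard-coordinate configurations, and then let \(\delta\to0\). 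One small caveat: distances to convex hulls computed inside the span of the \(y_i\) dominate distances in \(X\) only up to the embedding constant, but a lower bound in the subspace transfers to \(X\) directly. The reason is that a point \(x\in X\) close to the hulls can be replaced via the projection-free estimate \(\dist{x}{\conv A}\) with a nearby point of \(\conv A\), which lies in the span, at the cost of a factor \(2\). If that factor is too costly, I would work instead with the intrinsic distance between hulls, which lives in the span. For Radon this is automatic, since it measures \(\dist{\conv Q_0}{\conv Q_1}\).

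Carath\'eodory: take \(P=\{e_1,\dots,e_N\}\) in \(\ell_p^N\), so \(\diam P=2^{1/p}\), and let \(a\) be the barycenter. Any \(z\in\conv_kP\) is supported on at most \(k\) coordinates. Then \(\norm{a-z}_p\ge\parenth{(N-k)N^{-p}}^{1/p}+\ldots\), and optimizing over the \(k\) remaining weights gives \(\approx k^{-1+1/p}\) as \(N\to\infty\). Indeed, the mass \(1\) spread over \(k\) coordinates with \(a\to0\) gives norm \(\ge k^{1/p-1}\) by H\"older. Dividing by \(2^{1/p}\) yields the bound \(2^{-1/p}k^{-1+1/p}\).

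Radon: take \(Z_j=\{e_{(j,1)},\dots,e_{(j,2n)}\}\), all distinct coordinates, so \(D=2^{1/p}\). For any balanced split, the hulls \(\conv Q_0\) and \(\conv Q_1\) have disjoint supports, so for \(u\in\conv Q_0\) and \(v\in\conv Q_1\) we get \(\norm{u-v}_p=(\norm u_p^p+\norm v_p^p)^{1/p}\). Each of these is at least \((nr)^{1/p-1}\) by H\"older, since each is supported on \(nr\) coordinates with mass \(1\). Hence the distance is at least \(2^{1/p}(nr)^{1/p-1}\), which gives exactly \((nr)^{-1+1/p}\).

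Tverberg: use the same configuration with \(|Z_j|=k\), so each \(S_i\) consists of \(r\) coordinate vectors and the supports of distinct \(S_i\) are disjoint. For any \(x\) and any \(u_i\in\conv S_i\) we have \(\max_i\norm{x-u_i}\ge\frac12\norm{u_1-u_2}\ge\frac12\cdot2^{1/p}r^{1/p-1}\). This holds with \(x\) anywhere, and it is a genuinely intrinsic argument, so the factor-\(2\) issue disappears: in \(X\) we bound \(\norm{u_1-u_2}_X\) from below by the embedding. Dividing by \(D=2^{1/p}\) gives \(\frac12 r^{-1+1/p}\).

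Selection is where I expect the main obstacle, since \(\theta\) can be arbitrarily small. Take \(P=\{e_1,\dots,e_N\}\) with \(N\) huge. A fraction \(\theta\) of the \(r\)-subsets is pierced by the ball \(B(x,\rho D)\). If \(\theta\binom Nr\) exceeds the number of \(r\)-sets meeting a fixed \(r\)-set, which is at most \(\binom Nr-\binom{N-r}{r}=o\parenth{\binom Nr}\), then two pierced subsets \(Y_1,Y_2\) must be disjoint. The triangle inequality then gives \(2\rho D\ge\dist{\conv Y_1}{\conv Y_2}\ge 2^{1/p}r^{1/p-1}\), so \(\rho\ge\frac12r^{-1+1/p}\) after \(N\to\infty\). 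Finally, the consequence for type \(p_X\) follows by comparing these bounds with the upper bounds listed before the theorem.
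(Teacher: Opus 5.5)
Your proposal is correct and follows essentially the paper's route: finite representability of \(\ell_{p_X}\) via Maurey--Pisier, the standard-coordinate configurations with disjoint supports and the power-mean (H\"older) bound, and, for exterior centers, pairwise separation of the transferred hulls plus the triangle inequality. That last step is exactly the paper's transfer lemma, so your ``factor~2'' caveat is never actually needed. The only deviation is in the selection step, where you replace the Erd\H{o}s--Ko--Rado bound \(\binom{N-1}{r-1}=\frac{r}{N}\binom{N}{r}\) by the cruder count \(\binom{N}{r}-\binom{N-r}{r}\le\frac{r^2}{N}\binom{N}{r}\) of \(r\)-sets meeting a fixed one; this works equally well because \(N\) can be taken as large as you like.
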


\begin{thm}
\label{thm:intro-helly-quantitative}
For Helly, the same statement holds with \(X^*\) in place of \(X\): if
\(p_{X^*}\) is the supremal type of \(X^*\), then the Helly approximation
sequence of \(X\) cannot decay faster than the power
\[
        k^{-1 + \frac{1}{p_{X^*}}}.
\]
If \(X^*\) has type \(p_{X^*}\), this matches the order of the upper bound in
the no-dimensional Helly theorem.
\end{thm}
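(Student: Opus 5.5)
We need a lower bound \(\vhelly_X(k)\ge c\,k^{-1+1/p}\) for every \(p>p_{X^*}\), up to constants. The natural mechanism is the Maurey--Pisier theorem applied to \(X^*\): \(\ell_{q}^N\) is finitely representable in \(X^*\) with \(q=p_{X^*}\). By the principle of local reflexivity, or more simply because finite representability in \(X^*\) can be pulled back, we then get that \(\ell_{q'}^N\) is almost isometrically a quotient of \(X\), i.e.\ \(\ell_{q'}^N\) is finitely representable in \(X\) as a quotient, where \(1/q+1/q'=1\). I would rather work directly with functionals. Choose \(f_1,\dots,f_N\in X^*\), with \(N=k+1\), which are \((1+\delta)\)-equivalent to the unit vector basis of \(\ell_q^N\).

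Define the convex sets via half-spaces intersected with the ball:
\[
C_i=\{x\in \ball{}_X \st f_i(x)\ge t\},\qquad C_0=\{x\in\ball{}_X\st \textstyle\sum_i f_i(x)\le 0\}
\]
or a similar simplex-type family. The model is the standard simplex configuration in \(\ell_{q'}^N\): the \(N\) facets of a simplex, in which every \(k=N-1\) of them intersect while all \(N\) do not. The plan is to pick \(N=k+1\) hyperplanes \(H_i=\{f_i=a\}\) together with \(\{\sum f_i = b\}\) so that any \(k\) of them meet inside the ball, using a point built from the biorthogonal-type vectors obtained from the almost-isometric quotient. A common point of the \(\alpha\)-neighborhoods of all members then forces \(\sum_i f_i(x)\) to be simultaneously about \(\ge Na-N\alpha\|f_i\|\) and \(\le b+\alpha\|\sum f_i\|\). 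Since \(\|\sum f_i\|\approx N^{1/q}\), while the gap \(Na-b\) scales like \(N\cdot N^{-1/q'}\cdot\)(something), optimizing \(a\) should yield \(\alpha\gtrsim N^{-1+1/q}\). The cleanest route is to compute the Helly error of \(\ell_{q'}^N\) for this family exactly and then transfer it.

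Key steps: (1) apply Maurey--Pisier in \(X^*\) to get \(\ell_{q}^N\) uniformly in \(X^*\) for \(q=p_{X^*}\); (2) realize the corresponding simplex configuration in \(X\), using local reflexivity to find points \(x_J\in X\) of norm \(\le 1+\delta\) with prescribed values \(f_i(x_J)\) so that every \(k\)-subfamily intersects (after a slight shrinking to stay in the unit ball); (3) lower-bound the common-neighborhood radius by testing against the functional \(\sum f_i/\|\sum f_i\|\), giving \(\alpha\ge c N^{-1+1/q}\), with \(\delta\to0\).

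The main obstacle is step (2): exact intersection of \(k\)-subfamilies requires actual points of \(X\), not of \(X^{**}\), with exact functional values. I would handle this by local reflexivity, or by choosing the sets as half-spaces with a small slack that is absorbed into the constant. Because the statement asks only for the power of \(k\), constant losses are harmless.
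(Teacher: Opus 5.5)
Your proposal has a genuine gap, and it lies in the choice of configuration, not in step (2). With $N=k+1$ functionals you produce at most $k+2$ sets. A family of only $m=k+O(1)$ sets can never force a radius of order $k^{-1+1/q}$ when $q>1$.

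To see this, work in any Banach space. Suppose $C_1,\dots,C_m\subset\ball{}_{X}$ are convex and every $k$ of them intersect. Choose $p_J\in\bigcap_{i\in J}C_i$ for each $k$-subset $J\subset[m]$, and let $c$ be the average of all the $p_J$. For a fixed $i$, a fraction $k/m$ of the sets $J$ contain $i$. Hence $c=\frac{k}{m}a_i+\parenth{1-\frac{k}{m}}b_i$, where $a_i\in C_i$ is the average of $p_J$ over $J\ni i$ and $b_i$ is the average over $J\not\ni i$. It follows that $\dist{c}{C_i}\le 2(m-k)/m$. For $m=k+2$ this is at most $4/(k+2)$, which is $\littleo{k^{-1+1/q}}$.

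The failure is also visible inside your own heuristic. Consider a $k$-subfamily made of $\{\sum_i f_i\le b\}$ and $k-1$ of the sets $\{f_i\ge a\}$. It must contain a point of the unit ball. In $\ell_{q'}^N$ the two remaining coordinates then have to absorb $(k-1)a-b$ while staying in the ball, which forces $(k-1)a-b\le 2$. So $Na-b\le 2a+2$, which is $O(1)$ rather than a gap of order $N^{1/q}$, and your final inequality only gives $\alpha\gtrsim 1/N$.

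The missing idea is to let the number of sets be unboundedly larger than $k$. Take $s\gg k$ functionals $f_1,\dots,f_s\in X^*$ that are $(1+\delta)$-equivalent to the unit vector basis of $\ell_q^s$ with $q=p_{X^*}$. Maurey--Pisier gives this at $q=p_{X^*}$ itself, so you need not pass to $p>p_{X^*}$. Then use the $k$-out-of-$s$ model in $(\ell_q^s)^*=\ell_{q'}^s$, namely $K_i=\{y\in\ball{}_{\ell_{q'}^s}\st y_i\ge k^{-1+1/q}\}$ for $i\in[s]$.
\begin{itemize}
\item Any $k$ of these sets meet at $k^{-1+1/q}\sum_{j\in J}e_j^*$, whose $\ell_{q'}$-norm is exactly $1$.
\item A common point $y$ of all the $\alpha$-neighborhoods has every coordinate at least $k^{-1+1/q}-\alpha$ and norm at most $1+\alpha$.
\item Your test functional $\sum_{i\in[s]}e_i$, of norm $s^{1/q}$, then gives $1+\alpha\ge s^{1-1/q}\parenth{k^{-1+1/q}-\alpha}$.
\item Letting $s\to\infty$ yields $\alpha\ge k^{-1+1/q}$.
\end{itemize}
This is the paper's argument.

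The transfer to $X$ also needs no local reflexivity. Let $E$ be the span of the $f_i$. The evaluation map $Q\colon X\to E^*$ is a norm-one quotient map, and $E^*$ is $(1+\delta)$-isomorphic to $\ell_{q'}^s$. For $\lambda<1$, the sets $Q^{-1}(\lambda K_i)\cap\ball{}_{X}$ keep the $k$-wise intersections. Since $\norm{Q}\le1$, any approximate common point in $X$ is pushed down to one in $E^*$.

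Finally, this model degenerates at $q=1$, because all $K_i$ then contain $(1,\dots,1)$. The trivial-type endpoint therefore requires a separate construction.
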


The preceding optimality statements should be read together with the distinction between
type and infratype.  Infratype is genuinely weaker at the endpoint \(p=2\), which is the case most important
for Hilbert-type estimates: Talagrand \cite{talagrand2004type} constructed a symmetric sequence space of
infratype \(2\) which is not of type \(2\).  Thus, replacing type by infratype is not merely a change of terminology at the endpoint
\(p=2\). 
We state only the no-dimensional Radon estimate in the introduction, because
it is the cleanest new observation.  The companion Carath\'eodory-type  
bounds are obtained later in \Href{Section}{sec:infratype-positive} by combining the
Kadets--Kadets averaging lemma \cite{kadets1997series} with the
Carath\'eodory--Radon--Tverberg--selection--weak-net chain from
\cite{ivanov2021no}.

\begin{thm}[Infratype upper bound for Radon's theorem]
\label{thm:intro-infratype-radon}
Let \(X\) be a Banach space of infratype \(p>1\) with constant \(\Ip_p(X)\).  Then,  for every $n,r \in \N,$
\[
        \vrad_X(2n,r)
        \le
        \Ip_p(X)\, (nr)^{-1 + \frac{1}{p}}.
\]
\end{thm}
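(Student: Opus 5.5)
Given \(Z_1,\ldots,Z_r\), each of size \(2n\), we want a balanced split with \(\dist{\conv Q_0}{\conv Q_1}\le \Ip_p(X)(nr)^{-1+1/p}D\). The natural candidates are the uniform averages
\[
        a_0=\frac1{nr}\sum_{x\in Q_0}x\in\conv Q_0,
        \qquad
        a_1=\frac1{nr}\sum_{x\in Q_1}x\in\conv Q_1,
\]
and it suffices to find a balanced split with \(\norm{a_0-a_1}\le \Ip_p(X)(nr)^{-1+1/p}D\).

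Within each color class \(Z_j\), pair the \(2n\) points arbitrarily into \(n\) pairs \(\{u_{j,i},v_{j,i}\}\), \(i\in[n]\). Put \(y_{j,i}=u_{j,i}-v_{j,i}\), so that \(\norm{y_{j,i}}\le D\). A choice of signs \(\theta_{j,i}=\pm1\) determines a balanced split: \(Q_0\) receives \(u_{j,i}\) if \(\theta_{j,i}=1\) and \(v_{j,i}\) otherwise, and \(Q_1\) receives the other point. Each pair contributes exactly one point to each side, so \(\card{Q_0\cap Z_j}=\card{Q_1\cap Z_j}=n\). Moreover
\[
        \sum_{x\in Q_0}x-\sum_{x\in Q_1}x=\sum_{j\in[r]}\sum_{i\in[n]}\theta_{j,i}\,y_{j,i}.
\]

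Apply the infratype inequality to the \(nr\) vectors \(y_{j,i}\). It gives a sign choice with
\[
        \norm{\sum_{j,i}\theta_{j,i}\,y_{j,i}}
        \le
        \Ip_p(X)\parenth{\sum_{j,i}\norm{y_{j,i}}^p}^{1/p}
        \le
        \Ip_p(X)\,(nr)^{1/p}D.
\]
Dividing by \(nr\) yields
\[
        \dist{\conv Q_0}{\conv Q_1}\le\norm{a_0-a_1}\le \Ip_p(X)(nr)^{-1+1/p}D.
\]
Taking the infimum over splits and then the supremum over configurations gives the claim.

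The argument has no real obstacle. The one thing to check is that every sign pattern produces a balanced split, which is exactly what the pairing inside each \(Z_j\) guarantees. The point worth noting is that infratype suffices because only the existence of one good sign vector is used, not an average over all of them.
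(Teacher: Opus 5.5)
Your proof is correct and is essentially the paper's argument (Lemma~\ref{lem:infratype-radon}). Both pair the points within each color class, apply infratype to the $nr$ pair differences, and read off a balanced split whose centroids are $\Ip_p(X)(nr)^{-1+1/p}D$-close. The only cosmetic difference is that the paper keeps the individual diameters $D_j=\diam Z_j$, which gives the slightly sharper centroid bound $\frac{\Ip_p(X)}{nr}\bigl(n\sum_j D_j^p\bigr)^{1/p}$ before it specializes to $D=\max_j D_j$.
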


For the sake of completeness, we also sketch the proof of the following Helly-type bound.

\begin{thm}[Infratype upper bound for Helly's theorem]
\label{thm:intro-infratype-helly}
If \(X^*\) has infratype \(p>1\) with constant \(\Ip_p(X^*)\),
then
\[
        \vhelly_X(k)
        \le
        \frac{3}{1-2^{-1+\frac{1}{p}}}\,\Ip_p(X^*) \,
         k^{-1+\frac{1}{p}}.
\]
\end{thm}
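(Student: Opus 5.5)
We prove the bound in the dual form and reduce to a no-dimensional Carath\'eodory-type statement in \(X^*\). The structure follows the Helly argument of \cite{ivanov2026banachspaceshellyapproximation}. The only change is that the randomized type inequality is replaced by a deterministic infratype halving step.

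\textbf{Averaging lemma.} The key ingredient is an infratype version of Maurey's lemma in \(X^*\). Let \(f_1,\dots,f_m \in \ball{}_{X^*}\), and let \(\bar f\) be a convex combination of them. We want some average \(g\) of at most \(k\) of the \(f_i\) (with repetitions) such that
\[
\norm{\bar f - g} \le \frac{C}{1-2^{-1+1/p}}\,\Ip_p(X^*)\,k^{-1+1/p}.
\]
We use the Kadets--Kadets halving scheme \cite{kadets1997series}. First approximate the weights by a uniform multiset of \(2^s\) points. Then split the multiset into pairs and use infratype on the pairwise differences to discard one element of each pair, which passes from an average over \(2^{j}\) points to an average over \(2^{j-1}\) points. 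That step costs at most \(\Ip_p(X^*)\,2^{j(1/p-1)}\cdot 2\). Summing the resulting geometric series from \(2^s\) down to \(2^{\lfloor\log_2 k\rfloor}\) produces the factor \((1-2^{-1+1/p})^{-1}\). Replacing \(k\) by a power of two accounts for part of the constant \(3\).

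\textbf{Helly argument by separation.} Suppose every \(k\) sets of \(\mathcal F\) intersect, but the \(\alpha\)-neighborhoods of all members have no common point. Set \(K_C = C + \alpha\ball{}_X\). By a standard compactness and minimax reduction (Hahn--Banach separation applied to \(\prod C\) against the diagonal, as in \cite{ivanov2026banachspaceshellyapproximation}), we obtain functionals \(f_C\in\ball{}_{X^*}\) and weights \(\lambda_C\ge 0\) with \(\sum\lambda_C=1\) such that \(\sum_C \lambda_C f_C = 0\) and
\[
\sum_C \lambda_C \inf_{y\in C} f_C(y) > \alpha .
\]
Apply the averaging lemma to the \(f_C\) to get a subfamily \(\mathcal G\) of at most \(k\) sets and an average \(g=\frac1k\sum_{C\in\mathcal G} f_C\) with \(\norm{g}\le\beta\), where \(\beta\) is the bound above. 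Here there is an issue: the averaging must preserve the quantity \(\sum_C \lambda_C \inf_C f_C\) as well as approximate \(0\). We handle this by running the halving in \(X^*\oplus\R\), applied to the pairs \((f_C,\inf_C f_C)\). The real coordinate has norm at most \(1\) because \(C\subset\ball{}_X\), and the \(\ell_\infty\)-sum with \(\R\) changes the infratype constant by at most a constant factor. Alternatively, we choose signs so that the scalar part does not decrease, which is possible since for each pair one of the two choices of discarded element does not decrease the scalar average.

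Now pick \(x\in\bigcap\mathcal G\) and evaluate:
\[
\beta \ge g(x) = \frac1k\sum_{C\in\mathcal G} f_C(x) \ge \frac1k\sum_{C\in\mathcal G}\inf_C f_C > \alpha - (\text{error}).
\]
With \(\alpha\) equal to the claimed bound, this is a contradiction.

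\textbf{Main obstacle.} I expect the hardest step to be the simultaneous control of the scalar coordinate inside the halving scheme. The remaining bookkeeping gives the constant \(3\): one unit for rounding \(k\) down to a power of two, and two units for the combined error.
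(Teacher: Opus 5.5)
Your setup is sound but differs from the paper's: you use the uncentered minimax form $\sum_C\lambda_C f_C=0$, $\sum_C\lambda_C\inf_{y\in C}f_C(y)>\alpha$, and evaluate at a point of $\ball{}_X$. However, the step you single out as the main obstacle is not justified. The scalars $c_C=\inf_{y\in C}f_C(y)$ vary with $C$, and only their $\lambda$-average exceeds $\alpha$, so the subsample must not lower the scalar average.

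\begin{itemize}
\item Your second fix fails as written. In a halving step the signs $\theta_i$ are fixed jointly by the infratype inequality. Choosing pair by pair to keep the element with the larger scalar overrides them and destroys the bound on $\norm{\sum_i\theta_i(u_i-v_i)}$.
\item Your first fix asserts without proof that infratype passes to $X^*\oplus_\infty\R$. Unlike type, infratype only gives one good sign vector, and a good one for $X^*$ need not be good for the scalar coordinate. Even granting $\Ip_p(X^*\oplus_\infty\R)\le c\,\Ip_p(X^*)$, the two-sided scalar error adds to the norm error. You get roughly $2c$ times the halving constant, with nothing showing this is at most $3$.
\item The accounting ``one unit for rounding, two units for the combined error'' is not a computation. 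Rounding $k$ down to a power of two costs a multiplicative factor $2^{1-1/p}$, not an additive unit.
\end{itemize}

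The missing observation is one line. Let $c(u)$ denote the scalar attached to an element $u=f_C$ of the multiset. Replacing $\theta$ by $-\theta$ leaves $\norm{\sum_i\theta_i(u_i-v_i)}_{X^*}$ unchanged and reverses the sign of the scalar increment $\frac{1}{2m}\sum_i\theta_i\parenth{c(u_i)-c(v_i)}$. So after taking the infratype signs you may flip all of them to make the scalar average non-decreasing. With this, halve from $2^sk$ points down to $k$ exactly as in \Href{Lemma}{lem:infratype-car-direct}, with diameter at most $2$. This gives a multiset $\mathcal G$ of $k$ sets with $\norm{g}\le\frac{\Ip_p(X^*)}{1-2^{-1+1/p}}k^{-1+1/p}+\eta$ and scalar average $>\alpha-\eta$. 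Evaluating at $x\in\bigcap\mathcal G\subset\ball{}_X$ proves the theorem with constant $1$ in place of $3$, or $2^{1-1/p}$ with your power-of-two rounding.

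For comparison, the paper avoids the scalar issue altogether:
\begin{itemize}
\item It restricts to the finite-dimensional span $E$ of witness points and takes a minimizer $x_0$ of $\max_K\dist{x}{K\cap E}$.
\item It uses active functionals $u_s$ with $0\in\conv\braces{u_s}$ and $\iprod{u_s}{y-x_0}\le-\rho$ on $K_s$, with one common $\rho$ for all $s$. So any convex combination of at most $k$ of them works, and only $\operatorname{Car}_{E^*}(k)$ is needed, via infratype of the quotient $E^*$ of $X^*$.
\item The price is evaluating at $q-x_0$ with $\norm{q-x_0}\le3$, which is exactly where the $3$ comes from.
\end{itemize}
Your uncentered formulation works directly in $X^*$, needs no quotient lemma, and evaluates at a point of the unit ball. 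That is why, once the global sign flip is added, it gives the better constant.
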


Finally, we return the dimension to the picture in the particular cases of
\(\ell_1^n\) and \(\ell_\infty^n\).  We show that the logarithmic dependence on
the dimension in the finite-dimensional estimates is already forced by explicit
coordinate examples.

\subsection*{Paper organization.}
In \Href{Section}{sec:preliminaries}, we fix notation and recall the relevant definitions and results from Banach space theory.  In \Href{Section}{sec:caratheodory_opt}, we start with the
Carath\'eodory case and explain the coordinate obstruction behind Maurey's
lemma.  The corresponding lower bounds for colorful Radon, colorful Tverberg,
and the selection lemma are proved in \Href{Section}{sec:radon},
\Href{Section}{sec:tverberg}, and \Href{Section}{sec:selection}, respectively.
Weak \(\eps\)-nets are treated in \Href{Section}{sec:weak-net}.  The Helly lower bounds, which
are dual in nature, are proved in \Href{Section}{sec:helly}. 
In
\Href{Section}{sec:infratype-positive}, we prove the infratype Radon and Helly estimates
from the introduction and record the standard positive consequences obtained
from infratype.
 In
\Href{Section}{sec:dimension_strikes_back}, we discuss finite-dimensional
examples in \(\ell_1^n\) and \(\ell_\infty^n\), showing where the dimension has
to reappear.  Finally, in \Href{Section}{sec:intro-proofs}, we formally derive
the theorems stated in the introduction from the results proved in the body of
the paper.

\section{Notations and Banach space preliminaries}
\label{sec:preliminaries}
\subsection{Basic notation}

For \(n \in \N\), we put
\(
        [n]
        =
        \braces{1,\ldots,n}.
\)
If \(A\) is a finite set and \(t \in \N\), then
\[
        \binom{A}{t}
        =
        \braces{B \subset A \st \card{B} = t}.
\]
A family of sets is called \emph{intersecting} if any two of its elements have a non-empty intersection.

For a Banach space \(X\), we denote by
\[
        \ball{}_{X}
        =
        \braces{x \in X \st \norm{x}_X \le 1}
\]
its closed unit ball.  If \(x \in X\) and \(\alpha > 0\), then
\(x + \alpha\ball{}_{X}\) denotes the closed ball with center \(x\) and
radius \(\alpha\).  For \(f \in X^*\) and \(x \in X\), we write
\(
        \iprod{f}{x}
        =
        f(x)
\)
for the value of the functional \(f\) at the vector \(x\).

For a finite set \(S\) in a linear space, the centroid
\( \centroid{S}\) is defined by
\[
        \centroid{S}
        =
        \frac{1}{\card{S} }\sum\limits_{x \in S} x.
\]

\subsection{Type and finite representability}

Let \((\epsilon_i)_{i = 1}^m\) be independent Rademacher variables.  A Banach
space \(E\) has \emph{Rademacher type} \(p \in [1,2]\) if there is a constant
\(T_p(E) < \infty\) such that, for every finite sequence
\(u_1,\ldots,u_m \in E\),
\[
        \parenth{\EE \norm{\sum\limits_{i \in [m]} \epsilon_i u_i}_E^p}^{1/p}
        \le
        T_p(E)
        \parenth{\sum\limits_{i \in [m]} \norm{u_i}_E^p}^{1/p}.
\]
Every Banach space has type \(1\), and no Banach space has type strictly larger
than \(2\).  We denote the \emph{supremal type} of \(E\) by
\[
        p_E = \sup\braces{p \in [1,2] \st E \text{ has Rademacher type } p}.
\]
We say that \(E\) has \emph{trivial type} if \(p_E = 1\).  

We shall also need the corresponding deterministic notion.  The space \(E\) has
\emph{infratype} \(p\in[1,2]\) if there exists a constant \(\Ip_p(E)<\infty\)
such that, for every finite sequence \(u_1,\ldots,u_m\in E\),
\[
        \min_{\theta_i=\pm1}
        \norm{\sum\limits_{i\in[m]}\theta_i u_i}_E
        \le
        \Ip_p(E)
        \parenth{\sum\limits_{i\in[m]}\norm{u_i}_E^p}^{1/p}.
\]
We use \(\Ip_p(E)\) for the least admissible constant.  Every Banach space has
infratype \(1\).  Type \(p\) implies infratype \(p\), and
\(\Ip_p(E)\le T_p(E)\).
As for the converse, if $p < 2,$ infratype \(p\) implies type \(p\) \cite{talagrand1992type}. However, as shown in \cite{talagrand2004type}, there are spaces of infratype \(2\) that are not of type \(2.\)

Nevertheless, there is no difference for the supremal exponents. 
Define \emph{supremal infratype} by 
\[
        \iota_E = \sup\braces{p \in [1,2] \st E \text{ has infratype } p}.
\]
The Maurey--Pisier theorem \cite[Theorem 2.1]{maurey1976series} gives
\(
        \iota_E = p_E.
\)

Thus, \(E\) has non-trivial type if and only if it has non-trivial infratype.  

We recall the finite-representability language that will be used throughout the paper.

\begin{dfn}
Let \(X\) and \(Y\) be Banach spaces.  We say that \(Y\) is \emph{finitely
representable} in \(X\) if for every finite-dimensional subspace \(F \subset Y\)
and every \(\eps > 0\) there exists a linear isomorphism
\(
        T:F\to X
\)
such that
\[
        \norm{y}
        \le
        \norm{Ty}
        \le
        (1 + \eps)\norm{y}
        \qquad\text{for every } y\in F.
\]
\end{dfn}

We use the following form of the Maurey--Pisier theorem
\cite[Th\'eor\`eme 2.3]{maurey1976series}.

\begin{prp}
\label{prp:MP}
Let \(E\) be an infinite-dimensional Banach space and let \(p_E\) be its
supremal Rademacher type.  Then \(\ell_{p_E}\) is finitely representable in
\(E\).
\end{prp}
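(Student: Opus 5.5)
Since \Href{Proposition}{prp:MP} is the Maurey--Pisier theorem \cite[Th\'eor\`eme 2.3]{maurey1976series}, in the paper we simply cite it. If one wanted a self-contained argument, I would follow the classical route: reduce to an asymptotic statement about finite sequences, extract an almost symmetric structure via Krivine's theorem, and then identify the exponent.

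\emph{Reduction to a stable-type statement.} Put \(p=p_E\). First I would dispose of the case \(p=2\): by Dvoretzky's theorem \(\ell_2\) is finitely representable in every infinite-dimensional space. For \(p<2\) I would show that \(p\) is also the supremal stable type of \(E\), i.e.\ the supremum of those \(q\) for which
\(\EE\norm{\sum_i g_i^{(q)}u_i}\le C(\sum_i\norm{u_i}^q)^{1/q}\) with \(g_i^{(q)}\) i.i.d.\ standard \(q\)-stable variables. Type \(q\) implies stable type \(q'\) for \(q'<q\), and stable type \(q\) implies type \(q'\) for \(q'<q\); together these give the equality. Hence \(E\) fails stable type \(q\) for every \(q>p\).

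\emph{Submultiplicativity and Krivine.} Let \(a_n\) be the best constant in the \(n\)-vector stable-type-\(p\) inequality with the \(\ell_p\)-norm replaced by \(n^{1/p}\max\norm{u_i}\). Substituting blocks shows that \(a_{nm}\le a_na_m\) up to normalization. Failure of every stable type \(q>p\), combined with this submultiplicativity, produces for each \(n\) vectors \(u_1,\dots,u_n\) of norm about one with \(\EE\norm{\sum g_i^{(p)}u_i}\) of order \(n^{1/p}\) \emph{uniformly in \(n\)}. This means that, in a suitable averaged sense, \(\ell_p^n\)-type lower estimates hold. Forming a spreading model of these configurations (Brunel--Sucheston) gives a subsymmetric sequence in a space finitely representable in \(E\). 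Krivine's theorem then yields some \(\ell_r\) (or \(c_0\)) finitely represented on its block bases.

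\emph{Identifying \(r=p\); the main obstacle.} I expect this step to be the hardest. The value \(r\) must satisfy two constraints. It cannot exceed \(p\), because the lower estimate of order \(n^{1/p}\) survives passage to blocks. It cannot be smaller than \(p\), because \(\ell_r\) finitely representable in \(E\) with \(r<p\) would contradict \(E\) having type \(q\) for every \(q<p\) (for \(r<q\), \(\ell_r\) has no type \(q\)). The delicate point is arranging the blocks so that the Krivine output carries the stable-type-\(p\) extremal behaviour; the case \(c_0\) is excluded in the same way as \(r<p\). What I would try first is the original Maurey--Pisier device of working with \(p\)-stable averages of the extremal configurations, which are themselves \(p\)-homogeneous, before applying Krivine's theorem.
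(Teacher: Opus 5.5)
Citing the Maurey--Pisier theorem is exactly what the paper does, so your opening sentence matches the paper's treatment. Your optional self-contained sketch, however, has a genuine gap.

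\textbf{The second step is vacuous.} Take \(u_1=\dots=u_n=u\) with \(\norm{u}=1\). Stability gives \(\EE\norm{\sum_{i\le n} g_i^{(p)}u_i}=n^{1/p}\,\EE\abs{g_1^{(p)}}\) in every Banach space: the map \(a\mapsto\sum_i a_ig_i^{(p)}\) already embeds \(\ell_p\), up to a constant, isometrically into scalar \(L_1\). So unit vectors whose \(p\)-stable averages are of order \(n^{1/p}\) carry no information about \(E\), and a spreading model built from them may be degenerate. What would be informative at the exponent \(p\) is that your \(a_n\) is unbounded, which amounts to failure of stable type at \(p=p_E\) itself. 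That is the deep half of Maurey--Pisier: for \(p<2\), stable type \(p\) forces type \(p+\eps\) for some \(\eps>0\). Your first step only yields failure of stable type for \(q>p\).

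\textbf{The identification \(r\le p\) is not justified.} You argue that the lower estimate survives passage to blocks, but this does not follow:
\begin{itemize}
\item Krivine's theorem gives \(\ell_r\) on block bases with arbitrary, non-flat coefficients, and a lower bound on flat sums says nothing about such blocks.
\item Configurations chosen separately for each \(n\) do not give a single spreading sequence carrying the bound at all scales.
\end{itemize}
To pin \(r\) you would need the form of Krivine's theorem in which, for a subsymmetric sequence \((e_i)\), the exponent is determined by \(\norm{T_n}=n^{1/r+o(1)}\). Here \(T_n\) replaces each \(e_i\) by a flat block of \(n\) consecutive basis vectors. You would then apply it to a subsymmetric sequence, finitely representable in \(E\), with \(\norm{e_1+\dots+e_n}\ge c\,n^{1/p}\) for every \(n\); then \(\norm{T_n}\ge c\,n^{1/p}\) gives \(r\le p\). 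Producing such a sequence is the real work. Your argument for \(r\ge p\), and for excluding \(c_0\) when \(p>1\), is fine.

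\textbf{A repair.} This uses exactly what your first step proves and avoids Krivine's theorem altogether.
\begin{enumerate}
\item For each \(q\in(p,2)\), failure of stable type \(q\) yields \(q\)-stable vectors \(Y=\sum_j\theta_jx_j\) with \(\EE\norm{Y}=1\) and \(\sigma=(\sum_j\norm{x_j}^q)^{1/q}\) as small as you wish.
\item For independent copies \(Y_1,\dots,Y_n\), the vector \(\sum_ia_iY_i\) has the law of \(\norm{a}_q\,Y\).
\item The norm of a \(q\)-stable vector fluctuates about its mean only by an amount of order \(\sigma\), in probability, with constants depending only on \(q\). So a union bound over a finite net of the unit sphere of \(\ell_q^n\), with \(\sigma\) chosen small after fixing \(n\) and the accuracy, gives a realization with \(\norm{\sum_ia_iY_i}\approx\norm{a}_q\) for all \(a\). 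Hence \(\ell_q\) is finitely representable in \(E\).
\item Since \(d(\ell_p^n,\ell_q^n)\le n^{1/p-1/q}\to1\) as \(q\downarrow p\), the set of exponents \(r\) for which \(\ell_r\) is finitely representable in \(E\) is closed, and \(\ell_p\) follows.
\end{enumerate}
Together with Dvoretzky's theorem for \(p=2\), this proves the proposition.
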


In particular, if \(E\) has trivial type, then \(\ell_1\) is finitely
representable in \(E\).

\subsection{A transfer lemma}
We shall use \Href{Proposition}{prp:MP} in the following way. First, we
construct a finite configuration in a finite-dimensional model space \(E\), and then
we embed this model into the ambient space 
\(X\). The separation estimates are
proved inside 
\(E\). However, after the embedding, the centers witnessing intersections are allowed to be arbitrary points of \(X\), not
necessarily points of the embedded subspace. 
The following elementary
lemma shows that a lower bound on distances in the model is still enough to exclude such exterior centers.

\begin{lem}
\label{lem:separated-transfer}
Let \(T: E \to X\) be a linear map such that
\[
        \norm{u}_E
        \le
        \norm{Tu}_X
        \qquad\text{for every } u \in E.
\]
Let \(\mathcal A\) be a finite index set, 
and let 
\(\braces{C_A}_{A \in \mathcal A}\) 
be subsets of \(E\). 
Assume that 
\(
\mathcal B \subset \mathcal A
\) 
satisfies 
\[ 
\dist{C_A}{C_B} 
\ge 
\rho
 \qquad \text{for all distinct } 
 A,B \in \mathcal B. 
\]
Then, for  any two distinct \(A\) and \(B\) in \( \mathcal B\), 
 \[ 
 \dist{T\! \parenth{C_A}}{T\! \parenth{C_B}} 
  \ge 
 \rho .
  \] 
Moreover, for every \(0 < \alpha < \frac{\rho}{2}\), the  $\alpha$-neighborhoods of the sets 
\( 
\braces{T\! \parenth{C_B} \st   B \in \mathcal B} 
\)
 are pairwise disjoint. 
\end{lem}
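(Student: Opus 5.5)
The plan is to reduce both assertions to the norm inequality \(\norm{u}_E \le \norm{Tu}_X\), using linearity of \(T\) and the triangle inequality in \(X\).

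For the first assertion, fix distinct \(A,B\in\mathcal B\). Take arbitrary points of the two image sets; they have the form \(Tu\) and \(Tv\) with \(u\in C_A\) and \(v\in C_B\). By linearity \(Tu - Tv = T(u-v)\), so
\[
        \norm{Tu-Tv}_X = \norm{T(u-v)}_X \ge \norm{u-v}_E \ge \dist{C_A}{C_B} \ge \rho .
\]
Taking the infimum over \(u\in C_A\) and \(v\in C_B\) gives \(\dist{T\! \parenth{C_A}}{T\! \parenth{C_B}}\ge\rho\). The only point needing care is that every point of \(T(C_A)\) is an image of some point of \(C_A\); this holds by definition of the image, and we do not need injectivity of \(T\) separately (although it follows from the norm inequality).

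For the second assertion, fix \(0<\alpha<\rho/2\) and suppose, towards a contradiction, that some \(x\in X\) lies in the \(\alpha\)-neighborhoods of both \(T(C_A)\) and \(T(C_B)\) for distinct \(A,B\in\mathcal B\). Interpret the neighborhood as \(\braces{x\st \dist{x}{T(C_A)}\le\alpha}\), which is the larger, closed version and hence the harder case. Then for every \(\delta>0\) there are \(y\in T(C_A)\) and \(z\in T(C_B)\) with \(\norm{x-y}_X\le\alpha+\delta\) and \(\norm{x-z}_X\le\alpha+\delta\). The triangle inequality gives
\[
        \norm{y-z}_X \le 2\alpha+2\delta .
\]
Choose \(\delta\) with \(2\alpha+2\delta<\rho\). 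This contradicts the first part, which says \(\norm{y-z}_X\ge\rho\) for all such \(y,z\).

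Both steps are elementary, and I expect no real obstacle. The only subtlety is the strict inequality \(\alpha<\rho/2\). It supplies the slack \(\delta\) needed to pass from distances, which are infima, to actual points of the image sets, and it is what makes even the closed \(\alpha\)-neighborhoods disjoint. The key conceptual point is that the center \(x\) is allowed to lie outside \(T(E)\): the argument never projects \(x\) back to \(E\), but compares only points of the image sets, where the lower norm bound on \(T\) applies.
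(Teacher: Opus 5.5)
Your proof is correct and follows the same route as the paper. The first assertion comes from the pointwise bound $\norm{T(u-v)}_X \ge \norm{u-v}_E$ followed by taking infima, and the second is the triangle-inequality step that the paper states in one line; you have written it out, using the slack $\delta$ from $2\alpha<\rho$ to handle closed neighborhoods.
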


\begin{proof}
Let \(A,B \in \mathcal B\), 
\(A \ne B\). 
For every \(a \in C_A\) 
and \(b \in C_B\), 
we have 
\[ 
\norm{Ta - Tb}_X = 
\norm{T(a - b)}_X \ge
 \norm{a - b}_E. 
 \]
Hence,
 \[ 
 \dist{T\! \parenth{C_A}}{T\! \parenth{C_B}} \ge 
 \dist{C_A}{C_B} \ge 
 \rho .
  \] 
By the triangle inequality,  the \(\alpha\)-neighborhoods of \(T\! \parenth{C_A}\) and \(T \! \parenth{C_B}\) are disjoint whenever \(2\alpha < \rho\). The lemma follows.
\end{proof}

\section{No-dimensional Carath\'eodory-type results}
\label{sec:caratheodory_opt}

Before we proceed to the examples, let us clarify the form of the lower bounds
proved in the next few sections.  We shall refer to the results connected by the
standard chain
\[
        \text{Carath\'eodory}
        \Rightarrow
        \text{Radon}
        \Rightarrow
        \text{Tverberg}
        \Rightarrow
        \text{selection lemma}
        \Rightarrow
        \text{weak } \eps\text{-nets}
\]
as \emph{Carath\'eodory-type} results.  The positive results, namely Maurey's
lemma \cite{pisier1980remarques} and the no-dimensional Radon--Tverberg
machinery of \cite{ivanov2021no}, are  proved by approximating suitable
averages by centroids.  Our coordinate constructions give slightly stronger
obstructions: we estimate the distance to the whole relevant convex hull, not
only to the centroid appearing in the proof of the positive theorem.  Thus the
examples below show that, for these Carath\'eodory-type statements, allowing
arbitrary points of the corresponding convex hulls does not improve the order of
the best possible estimates.

\subsection{Carath\'eodory lower bounds}

Recall that the no-dimensional Carath\'eodory error is
\[
        \operatorname{Car}_X(k)
        =
        \sup_P \sup_{a \in \conv P}
                \frac{\dist{a}{\conv_k P}}{\diam P}.
\]
Thus, it measures how far \(\conv{P}\) is from 
\(\conv_k P.\)

For the sake of completeness, we now obtain \eqref{eq:Car_number_Maurey_lemma_bound} by rephrasing 
  Maurey's lemma.  
Assume $a = \lambda_1 x_1 + \dots + \lambda_m x_m,$ where 
$x_1, \dots, x_m $ are points of a bounded subset $P$ of a space $X$ of type $p$ and 
$\lambda_1,  \dots, \lambda_m$ are positive numbers summing up to one. 
Let \(Y\) be a random point of \(P\) such that
\[
        \PP(Y = x_i) = \lambda_i,
        \qquad
        \EE Y = a.
\]
Let \(Y_1,\ldots,Y_k\) and \(Y_1',\ldots,Y_k'\) be independent copies of
\(Y\).  Then, by Jensen's inequality applied conditionally on
\(Y_1,\ldots,Y_k\),
\[
        \EE \norm{\sum\limits_{i \in [k]} (Y_i - a)}_X
        \le
        \EE \norm{\sum\limits_{i \in [k]} (Y_i - Y_i')}_X .
\]
The random vector \(\sum_{i \in [k]} (Y_i - Y_i')\) is symmetric, and hence, it has
the same distribution as
\(\sum_{i \in [k]} \epsilon_i (Y_i - Y_i')\).  Therefore, by H\"older's
inequality and by the type \(p\) inequality,
\[
        \EE \norm{\sum\limits_{i \in [k]} (Y_i - a)}_X
        \le
        \parenth{\EE \norm{\sum\limits_{i \in [k]}
        \epsilon_i (Y_i - Y_i')}_X^p}^{\frac{1}{p}}
        \le
        T_p(X)\parenth{\sum\limits_{i \in [k]}
        \EE \norm{Y_i - Y_i'}_X^p}^{\frac{1}{p}} .
\]
Since \(Y_i,Y_i' \in P\), we have \(\norm{Y_i - Y_i'}_X \le \diam P\), and so
\[
        \EE \norm{\sum\limits_{i \in [k]} (Y_i - a)}_X
        \le
        T_p(X) k^{\frac{1}{p}} \diam P .
\]
Thus, some realization satisfies
\[
        \norm{\frac{1}{k}\sum\limits_{i \in [k]}Y_i - a}_X
        \le
        T_p(X) k^{-1 + \frac{1}{p}} \diam P .
\]
The point \(k^{-1}\sum_{i \in [k]}Y_i\) belongs to \(\conv_k P\), which proves
\eqref{eq:Car_number_Maurey_lemma_bound} with no extra factor \(2\).

The following coordinate example shows that this power is optimal. 

\begin{modelex}
\label{modelex:caratheodory-coordinate}
Let \(1 \le p \le 2\).  Let \(m > k\).  Consider the coordinate basis
\(
        e_1,\dots,e_m
\)
of \(\ell_p^m\), and put
\[
        P_m
        =
        \braces{e_1,\dots,e_m},
        \qquad
        a_m
        =
        \centroid{P_m}
        =
        \frac{1}{m}\sum\limits_{i \in [m]} e_i.
\]
Then
\[
        D_{\operatorname{Car}}
        :=
        \diam P_m
        =
        2^{1/p}.
\]
We shall refer to \(P_m\) as the \emph{coordinate Carath\'eodory
configuration} in \(\ell_p^m\).
\end{modelex}

\begin{lem}[The coordinate Carath\'eodory obstruction]
\label{lem:caratheodory-model}
For the configuration from
\Href{Model example}{modelex:caratheodory-coordinate}, the following estimates
hold.
If \(1<p\le2\), then, for every \(b\in \conv_k P_m\),
\[
        \norm{a_m-b}_p
        \ge
        \parenth{1-\frac{k}{m}} k^{-1 + \frac{1}{p}}.
\]
If \(p=1\), then, for every \(b\in \conv_k P_m\),
\[
        \norm{a_m-b}_1
        \ge
        2\parenth{1-\frac{k}{m}}.
\]

\noindent	       
Consequently,
\[
        \operatorname{Car}_{\ell_p^m}(k)
        \ge
        2^{-1/p}\parenth{1-\frac{k}{m}}k^{-1 + \frac{1}{p}}
        \qquad\text{for }1<p\le2,
\]
and
\[
        \operatorname{Car}_{\ell_1^m}(k)
        \ge
        1-\frac{k}{m}.
\]
\end{lem}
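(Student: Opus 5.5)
Every \(b\in\conv_k P_m\) is supported on a set \(S\subset[m]\) of at most \(k\) coordinates, say \(b=\sum_{i\in S}\lambda_i e_i\) with \(\lambda_i\ge0\) and \(\sum_{i\in S}\lambda_i=1\). Put \(s=\card{S}\le k\). The plan is to split the norm of \(a_m-b\) into its part on \(S\) and its part off \(S\), and to keep only what is needed.

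Case \(p=1\). Here we compute the norm exactly. Off \(S\) there are \(m-s\) coordinates, each equal to \(1/m\), contributing \((m-s)/m\). On \(S\), the triangle inequality gives
\[
\sum_{i\in S}\abs{\lambda_i-\tfrac1m}\ge \sum_{i\in S}\lambda_i-\tfrac{s}{m}=1-\tfrac sm .
\]
Adding the two parts yields \(\norm{a_m-b}_1\ge 2(1-s/m)\ge 2(1-k/m)\).

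Case \(1<p\le2\). Here we discard the coordinates off \(S\) and keep only those on \(S\). By Hölder's inequality on the \(s\) coordinates of \(S\),
\[
\Bigl(\sum_{i\in S}\abs{\lambda_i-\tfrac1m}^p\Bigr)^{1/p}\ge s^{\frac1p-1}\sum_{i\in S}\abs{\lambda_i-\tfrac1m}\ge s^{-1+\frac1p}\Bigl(1-\tfrac sm\Bigr).
\]
The exponent \(-1+1/p\) is negative, so \(s^{-1+1/p}\ge k^{-1+1/p}\). Also \(1-s/m\ge 1-k/m\). Together these give \(\norm{a_m-b}_p\ge(1-k/m)k^{-1+1/p}\).

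The off-\(S\) part is not needed for the stated bound; it would only add a further \((m-s)^{1/p}/m\). The Hölder step is the only point needing care: it must be applied on \(S\) alone, so that the factor is \(s^{-1+1/p}\) rather than something depending on \(m\).

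For the consequences, note that \(\diam P_m=\norm{e_i-e_j}_p=2^{1/p}\) for \(i\ne j\), and that \(a_m\in\conv P_m\). Dividing the distance bounds by the diameter gives the lower bounds on \(\operatorname{Car}_{\ell_p^m}(k)\). For \(p=1\) the diameter is \(2\), so the bound is \(2(1-k/m)/2=1-k/m\).
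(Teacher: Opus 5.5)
Your proof is correct and follows the paper's argument. The $p=1$ case is identical, and your H\"older step on the support $S$ is the same power-mean inequality that the paper states as Jensen's inequality, $\sum_{i\in S}\abs{\lambda_i-\frac1m}^p\ge s^{1-p}\parenth{1-\frac sm}^p$, after which both arguments use $s\le k<m$. One small caveat concerns only your aside, not the proof: for $p>1$ the off-support coordinates add $(m-s)m^{-p}$ to $\norm{a_m-b}_p^p$, not $(m-s)^{1/p}/m$ to the norm itself; this is the refinement recorded in the paper's remark following the lemma.
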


\begin{proof}
Let \(b\in \conv_k P_m\).  Then \(b\) has non-negative coordinates,
\(\norm{b}_1 = 1\), and its coordinate support has cardinality at most \(k\).
Denote this support by \(I\), and put \(s = \card I\).  Thus \(s\le k\).

Assume first that \(1<p\le2\).  By Jensen's inequality,
\[
        \sum\limits_{i\in I}\enorm{b_i - \frac{1}{m}}^p
        \ge
        s\parenth{\frac{1}{s}-\frac{1}{m}}^p
        =
        s^{1-p}\parenth{1-\frac{s}{m}}^p.
\]
Since \(s\le k\), we have
\[
        s^{1-p}\parenth{1-\frac{s}{m}}^p
        \ge
        k^{1-p}\parenth{1-\frac{k}{m}}^p.
\]
Therefore,
\[
        \norm{a_m-b}_p
        \ge
        \parenth{1-\frac{k}{m}}k^{-1 + \frac{1}{p}}.
\]

Now let \(p=1\).  We have
\[
        \norm{a_m-b}_1
        =
        \sum\limits_{i\in I}\enorm{b_i - \frac{1}{m}}
        +
        \sum\limits_{i\notin I}\frac{1}{m}.
\]
The first sum is at least
\[
        \enorm{\sum\limits_{i\in I} b_i - \frac{s}{m}}
        =
        1-\frac{s}{m},
\]
and the second sum equals \((m-s)/m\).  Hence,
\[
        \norm{a_m-b}_1
        \ge
        2\parenth{1-\frac{s}{m}}
        \ge
        2\parenth{1-\frac{k}{m}}.
\]
Dividing by \(D_{\operatorname{Car}} = 2^{1/p}\) gives the normalized
estimates.
\end{proof}
\begin{rem}
One can obtain a more bulky estimate 
\[ 
\norm{a_m - b}_p \ge 
\parenth{ \left(1-\frac{k}{m}\right)^p k^{1-p} + \frac{m-k}{m^p}}^\frac{1}{p},
\]
which gives continuity at the endpoint $p=1.$
\end{rem}

\begin{thm}[Optimality of Maurey's lemma]
\label{thm:caratheodory-sup-type}
Let \(X\) be an infinite-dimensional Banach space, and let \(p_X\) be the
supremal type of \(X\). 
If \(p_X > 1\), then, for every \(k \in \N\),
\[
        \operatorname{Car}_X(k)
        \ge
        2^{- \frac{1}{p_X}} k^{-1 + \frac{1}{p_X}}.
\]
If \(p_X = 1\), then, for every \(k \in \N\),
\[
        \operatorname{Car}_X(k)
        =    1.
\]

In particular, if \(X\) has type \(p_X\), then the power
\(k^{-1 + \frac{1}{p_X}}\) in Maurey's lemma is optimal.
\end{thm}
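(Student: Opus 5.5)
The plan is to transfer the coordinate Carath\'eodory configuration from \Href{Model example}{modelex:caratheodory-coordinate} into \(X\) via \Href{Proposition}{prp:MP}, and then let \(m\to\infty\) and the distortion \(\eps\to0\). Fix \(k\), \(m>k\) and \(\eps>0\). By \Href{Proposition}{prp:MP}, \(\ell_{p_X}\) is finitely representable in \(X\), so \(\ell_{p_X}^m\) is as well. Hence there is a linear map \(T:\ell_{p_X}^m\to X\) with \(\norm{u}\le\norm{Tu}\le(1+\eps)\norm{u}\). Put \(P=T(P_m)=\braces{Te_1,\ldots,Te_m}\). Because \(T\) is linear, \(T(a_m)\in\conv P\), and moreover \(\conv_k P = T(\conv_k P_m)\).

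We estimate the two quantities in the ratio separately. For the diameter, \(\diam P\le(1+\eps)\diam P_m=(1+\eps)2^{1/p_X}\). For the distance, take any \(y\in\conv_k P\) and write \(y=Tb\) with \(b\in\conv_k P_m\). The lower bound for \(T\) gives
\[
        \norm{Ta_m - y}_X \ge \norm{a_m-b}_{p_X}.
\]
Now apply \Href{Lemma}{lem:caratheodory-model}. If \(p_X>1\), this yields
\[
        \operatorname{Car}_X(k)\ge \frac{(1-k/m)\,k^{-1+1/p_X}}{(1+\eps)2^{1/p_X}}.
\]
If \(p_X=1\), it yields \(\operatorname{Car}_X(k)\ge (1-k/m)/(1+\eps)\). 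Letting \(m\to\infty\) and \(\eps\to0\) gives the claimed lower bounds.

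It remains to prove the upper bound \(\operatorname{Car}_X(k)\le 1\), which is needed for the equality when \(p_X=1\). We argue directly. Take \(a\in\conv P\) and any \(x\in P\subset\conv_k P\). Write \(a=\sum\lambda_i x_i\) with \(x_i\in P\). Then
\[
        \norm{a-x}\le\sum\lambda_i\norm{x_i-x}\le\diam P,
\]
so the ratio never exceeds \(1\).

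The final remark, that the power is optimal when \(X\) has type \(p_X\), follows by comparing the lower bound with \eqref{eq:Car_number_Maurey_lemma_bound} at \(p=p_X\): both are of order \(k^{-1+1/p_X}\).

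The only real subtlety is that the nearest point of \(\conv_k P\) might be feared to lie outside \(T(E)\). This cannot happen: \(\conv_k P\) is contained in \(T(E)\) by linearity, so the distance is only ever measured to points of the form \(Tb\), and no appeal to \Href{Lemma}{lem:separated-transfer} is needed here.
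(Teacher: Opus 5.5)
Your proposal is correct and follows the paper's proof essentially step for step. You embed \(\ell_{p_X}^m\) into \(X\) via the Maurey--Pisier theorem and use \(\conv_k T(P_m) = T(\conv_k P_m)\) together with the coordinate Carath\'eodory lemma and the diameter bound \((1+\eps)2^{1/p_X}\); you then let \(m\to\infty\) and \(\eps\to 0\), and settle the case \(p_X=1\) with the trivial bound \(\operatorname{Car}_X(k)\le 1\).
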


\begin{proof}
First assume that \(p_X > 1\).  
Fix \(k \in \N\), \(m > k\), and
\(\delta \in (0,1)\).  
By \Href{Proposition}{prp:MP}, there is a linear map
\(
        T \st \ell_{p_X}^m \to X
\)
such that
\[
        \norm{u}_{p_X}
        \le
        \norm{Tu}_X
        \le
        (1 + \delta) \norm{u}_{p_X}
        \qquad\text{for every }
        u \in \ell_{p_X}^m.
\]
Apply \(T\) to the coordinate Carath\'eodory configuration from
\Href{Model example}{modelex:caratheodory-coordinate}.  The image of
\(a_m\) belongs to the convex hull of the transferred set.  Moreover, every
point in the \(k\)-convex hull of the transferred set has the form \(Tb\),
where \(b \in \conv_k P_m\).  By \Href{Lemma}{lem:caratheodory-model},
\[
        \norm{Ta_m - Tb}_X
        \ge
        \norm{a_m - b}_{p_X}
        \ge
        \parenth{1-\frac{k}{m}} k^{-1 + \frac{1}{p_X}}.
\]
The diameter of the transferred set is at most
\[
        (1 + \delta) D_{\operatorname{Car}}
        =
        (1  + \delta)2^{\frac{1}{p_X}}.
\]
Therefore
\[
        \operatorname{Car}_X(k)
        \ge
        \frac{1}{1 + \delta}2^{-\frac{1}{p_X}}
        \parenth{1 - \frac{k}{m}} k^{-1 + \frac{1}{p_X}}.
\]
Letting first \(m\to\infty\) and then \(\delta\to0\), we get
\[
        \operatorname{Car}_X(k)
        \ge
        2^{- \frac{1}{p_X}} k^{-1 + \frac{1}{p_X}}.
\]

Now assume that \(p_X = 1\).
  The same argument, using the \(p = 1\) part of
\Href{Lemma}{lem:caratheodory-model}, gives
\[
        \operatorname{Car}_X(k)
        \ge
        \frac{1}{1 + \delta}\parenth{1-\frac{k}{m}}.
\]
Letting \(m \to \infty\) and then \(\delta \to 0\), we get
\[
        \operatorname{Car}_X(k)
        \ge         1.
\]
The opposite inequality \(\operatorname{Car}_X(k) \le 1\) is trivial: if
\(a \in \conv P\), then for every \(x \in P\) we have
\[
        \norm{a - x}_X
        \le
        \diam P.
\]
Thus \(\operatorname{Car}_X(k) = 1\) for all \(k\).
\end{proof}

\section{Colorful Radon lower bounds}
\label{sec:radon}

Recall that the normalized Radon separation is
\[
        \vrad_X(2n,r)
        =
        \sup_{Z_1,\ldots,Z_r}
        \inf_{S = Q_0\sqcup Q_1}
        \frac{\dist{\conv\, Q_0}{\conv\, Q_1}}{D},
\]
where the infimum is over all balanced Radon splits of 
the set \(\bigcup\limits_{j \in [r]} Z_j\), where 
\(Z_1,\ldots,Z_r\subset X\) are pairwise disjoint
subsets of $X$ of cardinality \(2n\).

\begin{modelex}[Coordinate Radon configuration]
\label{modelex:radon-coordinate}
Let \(1 \le p \le 2\).    Consider the coordinate basis
\(
        e_{j,a},
\)
\(
        j \in [r],
\)
\(
 a \in [2n],
\)
of \(\ell_p^{2nr}\).  For each \(j \in [r]\), put
\[
        Z_j
        =
        \braces{e_{j,1},\ldots,e_{j,n}}.
\]
Then
\[
        D_{\vrad}
        :=
        \max_{j \in [r]} \diam Z_j
        =
        2^{1/p}.
\]
We shall refer to the sets \(Z_1,\ldots,Z_r\) as the \emph{coordinate Radon
configuration} in \(\ell_p^{2nr}\).
\end{modelex}

\begin{lem}
\label{lem:radon-model}
For the configuration from \Href{Model example}{modelex:radon-coordinate},
every balanced Radon split
\[
        S = Q_0 \sqcup Q_1,
        \qquad
        S = \bigcup_{j \in [r]} Z_j,
\]
satisfies
\[
        \dist{\conv \, Q_0}{\conv\, Q_1}
        =
        \norm{\centroid{Q_0} - \centroid{Q_1}}_p
        =
        2^\frac{1}{p} (nr)^{-1 + \frac{1}{p}}.
\]
In particular,
\[
        \vrad_{\ell_p^{2nr}}(2n,r)
        \ge
        (nr)^{-1 + \frac{1}{p}}.
\]
\end{lem}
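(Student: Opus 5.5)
The argument is a direct computation in \(\ell_p^{2nr}\). I read the configuration as \(Z_j=\braces{e_{j,a}\st a\in[2n]}\), so that each \(Z_j\) has cardinality \(2n\) as the definition of \(\vrad\) requires; the index range \(a\in[n]\) printed in \Href{Model example}{modelex:radon-coordinate} appears to be a typo. The key structural fact is that \(S\) consists of \(2nr\) distinct standard basis vectors. In any balanced Radon split, \(Q_0\) and \(Q_1\) each contain exactly \(n\) vectors from every \(Z_j\), hence exactly \(nr\) basis vectors, and their coordinate supports \(I_0,I_1\) are disjoint. The diameter claim is immediate: two distinct basis vectors are at \(\ell_p\)-distance \((1+1)^{1/p}=2^{1/p}\).

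Next I describe the two convex hulls. A point \(x\in\conv Q_0\) is a non-negative vector supported on \(I_0\) with \(\sum_{i\in I_0}x_i=1\). Similarly, \(y\in\conv Q_1\) is a non-negative vector supported on \(I_1\) with coordinate sum \(1\). Since the supports are disjoint,
\[
        \norm{x-y}_p^p=\sum\limits_{i\in I_0}x_i^p+\sum\limits_{i\in I_1}y_i^p .
\]
By convexity of \(t\mapsto t^p\) (Jensen, as in the proof of \Href{Lemma}{lem:caratheodory-model}), each sum is at least \(nr\,(1/(nr))^p=(nr)^{1-p}\). Equality holds when all coordinates equal \(1/(nr)\), that is, at the centroids \(\centroid{Q_0}\) and \(\centroid{Q_1}\). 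Hence
\[
        \norm{x-y}_p\ge 2^{1/p}(nr)^{-1+\frac1p}
\]
for all \(x\in\conv Q_0\) and \(y\in\conv Q_1\), with equality at the pair of centroids. This proves the displayed identity
\[
        \dist{\conv Q_0}{\conv Q_1}=\norm{\centroid{Q_0}-\centroid{Q_1}}_p=2^{1/p}(nr)^{-1+\frac1p}.
\]
The case \(p=1\) is included: there both sums equal \(1\) exactly, and the distance is \(2\).

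Finally, the identity holds for every balanced Radon split, so the infimum over splits equals this value. Dividing by \(D_{\vrad}=2^{1/p}\) gives a normalized separation of \((nr)^{-1+1/p}\), and taking the supremum over configurations yields \(\vrad_{\ell_p^{2nr}}(2n,r)\ge (nr)^{-1+1/p}\). There is no real obstacle here. The only points needing care are the disjointness of the supports, which is what lets the \(p\)-th power of the norm split as a sum, and the equality case of Jensen's inequality, which identifies the minimizing pair as the centroids.
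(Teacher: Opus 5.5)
Your proof is correct and follows the paper's argument essentially verbatim: disjoint supports split \(\norm{x-y}_p^p\) into \(\norm{x}_p^p+\norm{y}_p^p\), and each term is minimized at the centroid, which gives \(2^{1/p}(nr)^{-1+\frac1p}\) before normalizing by \(D_{\vrad}=2^{1/p}\). Your reading \(Z_j=\braces{e_{j,a}\st a\in[2n]}\) is the intended one, since each color class must have \(2n\) points; the index range \([n]\) printed in the model example is a typo.
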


\begin{proof}
Let
\[
        S = Q_0 \sqcup Q_1
\]
be an arbitrary balanced Radon split.  Then,
\(
        \card{Q_0} = \card{Q_1} = {nr},
\)
and the coordinate supports of \(Q_0\) and \(Q_1\) are disjoint.  If
\(
        x \in \conv \, Q_0,
\)
and 
\(
        y \in \conv \, Q_1,
\)
then \(x\) and \(y\) have non-negative coordinates, their supports are
disjoint, and
\(
        \norm{x}_1 = \norm{y}_1 = 1.
\)
Hence,
\[
        \norm{x - y}_p^p
        =
        \norm{x}_p^p + \norm{y}_p^p.
\]
The minimum of \(\norm{x}_p\) over \(\conv\, Q_0\) is attained at the centroid of
\(Q_0\), and equals
\[
        \parenth{nr}^{\frac{1}{p} - 1}.
\]
The same holds for \(Q_1\).  Therefore,
\[
        \dist{\conv\, Q_0}{\conv\, Q_1}
        =
        \norm{\centroid{Q_0} - \centroid{Q_1}}_p
        =
        \parenth{
        2\parenth{{nr}}^{1-p}
        }^{\frac{1}{p}}
        =
        2^{\frac{1}{p}} (nr)^{-1 + \frac{1}{p}}.
\]
Thus,
\[
        \frac{\dist{\conv \, Q_0}{\conv \, Q_1}}{D_{\vrad}}
        =
        (nr)^{-1 + \frac{1}{p}}.
\]
Since the balanced split was arbitrary, the claim follows.
\end{proof}

\begin{rem}
In the preceding example, the closest points of 
\(\conv{\, Q_0}\) and \(\conv{\, Q_1}\)
are precisely the centroids of \(Q_0\) and \(Q_1\).  Moreover,
\[
        \norm{\centroid{Q_0} - \centroid S}_p
        =
        \norm{\centroid{Q_1} - \centroid S}_p
        =
        2^{-1+ \frac{1}{p}} (nr)^{-1 + \frac{1}{p}},
\]
and
\[
        \dist{\conv{\, Q_0}}{\conv{\,  Q_1}}
        =
        2^{\frac{1}{p}}(nr)^{-1 + \frac{1}{p}}.
\]
Thus, in this model example, the convex-hull separation is exactly twice the
centroid error appearing in the original Radon statement.
\end{rem}

\begin{thm}
\label{thm:radon-sup-type}
Let \(X\) be an infinite-dimensional Banach space, and let \(p_X\) be the
supremal type of \(X\).  
Then, for every  \(n\) and every \(r\),
\[
        \vrad_X(2n,r)
        \ge
        (nr)^{-1 + \frac{1}{p_X}}.
\]
  If, moreover, \(X\) has type
\(p_X\), then the power
\(
        (nr)^{-1 + \frac{1}{p_X}}
\)
in the no-dimensional colorful Radon theorem is optimal.
\end{thm}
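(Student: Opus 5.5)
The plan is to repeat the transfer argument from the proof of \Href{Theorem}{thm:caratheodory-sup-type}, this time applied to the coordinate Radon configuration of \Href{Model example}{modelex:radon-coordinate}. (Here each \(Z_j\) should be read as the \(2n\) vectors \(e_{j,1},\ldots,e_{j,2n}\), so that the definition of \(\vrad\) applies.) Fix \(n,r\in\N\) and \(\delta\in(0,1)\). Write \(p=p_X\).

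\Href{Proposition}{prp:MP} provides a linear map \(T\colon \ell_{p}^{2nr}\to X\) with
\[
        \norm{u}_{p}\le\norm{Tu}_X\le(1+\delta)\norm{u}_{p}
        \qquad\text{for every } u\in\ell_p^{2nr}.
\]
This holds also when \(p=1\), since then \(\ell_1\) is finitely representable in \(X\). We set \(Z_j'=T(Z_j)\). The map \(T\) is injective, so the sets \(Z_j'\) are pairwise disjoint and each has \(2n\) points. Moreover,
\[
        \max_j\diam Z_j'\le(1+\delta)2^{1/p}.
\]

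Next we transfer the splits. Since \(T\) is a bijection from \(S\) onto \(S'=\bigcup_j Z_j'\) that preserves the colour classes, every balanced Radon split \(S'=Q_0'\sqcup Q_1'\) is of the form \(Q_i'=T(Q_i)\) for a balanced Radon split \(S=Q_0\sqcup Q_1\) of the model. Linearity gives \(\conv Q_i'=T(\conv Q_i)\).

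Now apply \Href{Lemma}{lem:separated-transfer}, or simply the lower bound \(\norm{Tu}_X\ge\norm{u}_p\), together with \Href{Lemma}{lem:radon-model}. This yields
\[
        \dist{\conv Q_0'}{\conv Q_1'}\ge\dist{\conv Q_0}{\conv Q_1}=2^{1/p}(nr)^{-1+1/p}.
\]
Dividing by the diameter bound, we get
\[
        \vrad_X(2n,r)\ge\frac{(nr)^{-1+1/p}}{1+\delta},
\]
and letting \(\delta\to0\) gives the claimed bound. When \(p=1\) the bound reads \(\vrad_X\ge 1\), which is consistent with the statement.

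For the optimality part, assume \(X\) has type \(p_X\). If \(p_X>1\), the upper bound \(T_{p_X}(X)(nr)^{-1+1/p_X}\) quoted from \cite{ivanov2021no} matches this lower bound up to the constant, so the power cannot be improved.

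There is no real obstacle here. The only point that needs checking is the correspondence of splits under \(T\): we must make sure that taking the infimum over splits in \(X\) does not allow configurations beyond the images of model splits. This holds because the splits are combinatorial objects living on the finite set \(S'\), and the distance between convex hulls is bounded from below through the isometric lower estimate for \(T\).
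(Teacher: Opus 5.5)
Your proof is correct and is essentially the paper's argument: you transfer the coordinate Radon configuration into \(X\) via \Href{Proposition}{prp:MP}, observe that every balanced split of the image comes from a balanced split of the model, combine the lower estimate for \(T\) (\Href{Lemma}{lem:separated-transfer}) with \Href{Lemma}{lem:radon-model}, divide by the \((1+\delta)2^{1/p_X}\) diameter bound, and let \(\delta\to0\). Your reading of each \(Z_j\) as the \(2n\) vectors \(e_{j,1},\ldots,e_{j,2n}\) is the intended one, since the model example as printed lists only \(n\) of them.
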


\begin{proof}
Fix \(\delta \in (0,1)\).  By \Href{Proposition}{prp:MP}, there is a linear map
\[
        T:\ell_{p_X}^{2nr}\to X
\]
such that
\[
        \norm{u}_{p_X}
        \le
        \norm{Tu}_{X}
        \le
        (1 + \delta)\norm{u}_{p_X}
        \qquad\text{for every } u\in\ell_{p_X}^{2nr}.
\]
Apply \(T\) to the coordinate Radon configuration from
\Href{Model example}{modelex:radon-coordinate}.
Since \(T\) is injective, every balanced Radon split of the transferred
configuration is the image of a unique balanced Radon split of the model
configuration.

Let \(C_0,C_1\subset \ell_{p_X}^{2nr}\) be the two convex hulls arising from an
arbitrary balanced Radon split in the model configuration.  By
\Href{Lemma}{lem:radon-model} and by \Href{Lemma}{lem:separated-transfer}, 
\[
         \dist{T\!\parenth{C_0}}{T\!\parenth{C_1}}
        \ge
        2^{\frac{1}{p_X}}(nr)^{-1 + \frac{1}{p_X}}.
\]

Again by
\Href{Lemma}{lem:radon-model}, the transferred color-class diameters are at most
\[
        (1 + \delta) D_{\vrad},
        \qquad
        D_{\vrad} = 2^{\frac{1}{p_X}}.
\]
Therefore, by the definition of \(\vrad_X(2n,r)\),
\[
        \vrad_X(2n,r)
        \ge
        2^{\frac{1}{p_X}} \frac{(nr)^{-1 + \frac{1}{p_X}}}
        {(1 + \delta) D_{\vrad}}
        =
        \frac{1}{1 + \delta}
       (nr)^{-1 + \frac{1}{p_X}}.
\]
Letting \(\delta \to 0\) gives the stated bound.
\end{proof}

The case of trivial type follows by applying
\Href{Theorem}{thm:radon-sup-type} with \(p_X = 1\).

\begin{cor}
\label{cor:radon-trivial-type}
If \(X\) has trivial type, then, for every  \(n\) and every \(r\),
\[
        \vrad_X(2n,r)
        \ge
        1.
\]
Thus, no no-dimensional colorful Radon estimate with an error tending to zero as
\(nr \to \infty\) can hold in a space of trivial type.
\end{cor}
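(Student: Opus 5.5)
The corollary is the endpoint case $p_X = 1$ of \Href{Theorem}{thm:radon-sup-type}, so the plan is to specialize that theorem rather than build a new construction. The only non-formal point is that the theorem is stated for infinite-dimensional spaces, so we first check that a space of trivial type is automatically infinite-dimensional. Every finite-dimensional space $E$ has type $2$: by John's theorem it is isomorphic to a Euclidean space, and a Hilbert space has type $2$ by the parallelogram identity. Hence $p_E = 2 \neq 1$ in finite dimensions. So if $X$ has trivial type, then $X$ is infinite-dimensional and $p_X = 1$.

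Next we apply \Href{Theorem}{thm:radon-sup-type} with $p_X = 1$. This gives, for every $n$ and $r$,
\[
        \vrad_X(2n,r) \ge (nr)^{-1 + 1} = 1 .
\]
This is the claimed inequality.

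For completeness we unwind what happens inside the theorem at $p = 1$. \Href{Proposition}{prp:MP} supplies a $(1+\delta)$-embedding $T$ of $\ell_1^{2nr}$ into $X$. \Href{Lemma}{lem:radon-model} with $p=1$ shows that any two convex hulls $\conv Q_0$ and $\conv Q_1$ coming from a balanced split have disjoint supports and $\ell_1$-norm one. Therefore every pair of their points is at distance exactly $2$, while $D_{\vrad} = 2$. \Href{Lemma}{lem:separated-transfer} then keeps the separation at least $2$ after applying $T$, while the diameters grow to at most $2(1+\delta)$. Letting $\delta \to 0$ yields the bound $1$.

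For the final sentence of the corollary, note that $1$ is a uniform positive lower bound independent of $n$ and $r$. Hence $\vrad_X(2n,r)$ cannot tend to $0$ as $nr \to \infty$, and no no-dimensional colorful Radon theorem holds in $X$. The only step requiring any thought is the infinite-dimensionality check above. Even that could be bypassed, since the definition of trivial type already presupposes that $p_X = 1$ is attained only in infinite dimensions.
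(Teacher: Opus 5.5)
Your proposal is correct and is exactly the paper's argument: the corollary is Theorem~\ref{thm:radon-sup-type} specialized to $p_X=1$, where $(nr)^{-1+1/p_X}=1$. Your check that trivial type forces $X$ to be infinite-dimensional (finite-dimensional spaces have type $2$) is the right justification for a detail the paper leaves implicit. Your closing remark that the definition ``presupposes'' this is loose, since infinite-dimensionality is a consequence rather than part of the definition, but nothing in the argument depends on it.
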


\section{Colorful Tverberg lower bounds}
\label{sec:tverberg}
Recall that the colorful Tverberg error is
\[
        \vtv_X(r,k)
        =
        \sup_{Z_1,\ldots,Z_r}
        \inf_{x,\, S = S_1 \sqcup \cdots \sqcup S_k}
        \frac{\max_{i \in [k]} \dist{x}{\conv S_i}}{D},
\]
where the supremum is over pairwise disjoint sets
\(Z_1,\ldots,Z_r \subset X\), each of cardinality \(k\),
\[
        S
        =
        \bigcup_{j  \in [r]} Z_j,
        \qquad
        D
        =
        \max_{j \in [r]} \diam Z_j,
\]
and the infimum is over all points \(x \in X\) and all colorful
\(k\)-partitions
\[
        S
        =
        S_1 \sqcup \cdots \sqcup S_k .
\]
Here colorful means that \(\card{S_i \cap Z_j} = 1\) for every
\(i \in [k]\) and every \(j \in [r]\).  Thus \(\vtv_X(r,k)\) is the smallest
normalized radius which, in the worst case, guarantees that the convex hulls of
the parts of a colorful partition have a common approximate intersection
point.  The coordinate example below shows
that the order of this radius cannot be improved.

\begin{modelex}[Coordinate colorful Tverberg configuration]
\label{modelex:tverberg-coordinate}
Let \(1 \le p \le 2\).  Let \(r \in \N\) and \(k \ge 2\).  Consider the
coordinate basis
\(
        e_{j,a},
\)
\(
    j \in [r],
\)
\(
 a \in [k],
\)
of \(\ell_p^{rk}\).  For each \(j \in [r]\), put
\[
        Z_j
        =
        \braces{e_{j,1},\ldots,e_{j,k}}.
\]
Then
\[
        D_{\vtv}
        :=
        \max_{j \in [r]} \diam Z_j
        =
        2^{\frac{1}{p}}.
\]
We shall refer to the sets \(Z_1,\ldots,Z_r\) as the \emph{coordinate colorful
Tverberg configuration} in \(\ell_p^{rk}\).
\end{modelex}

\begin{lem}[The coordinate colorful Tverberg obstruction]
\label{lem:tverberg-model}
For the configuration from
\Href{Model example}{modelex:tverberg-coordinate}, every colorful
\(k\)-partition
\[
        S
        =
        S_1 \sqcup \cdots \sqcup S_k,
        \qquad
        S
        =
        \bigcup_{j \in [r]} Z_j,
\]
satisfies
\[
        \dist{\conv{\, S_i}}{\conv{\, S_\ell}}
        \ge
        D_{\vtv}\, r^{-1 + \frac{1}{p}}
\]
for every pair of distinct indices 
\(i,\ell \in [k]\).  
Consequently, for every
\(x_0 \in \ell_p^{rk}\),
\[
        \max_{m \in [k]} \dist{x_0}{\conv S_m}
        \ge
        \frac{1}{2}D_{\vtv} \, r^{-1 + \frac{1}{p}}.
\]
In particular,
\[
        \vtv_{\ell_p^{rk}}(r,k)
        \ge
        \frac{1}{2}r^{-1 + \frac{1}{p}}.
\]
\end{lem}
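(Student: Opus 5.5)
The plan follows the proof of \Href{Lemma}{lem:radon-model}, exploiting disjointness of coordinate supports. Fix a colorful \(k\)-partition and two distinct indices \(i,\ell\in[k]\). Each part \(S_i\) takes exactly one basis vector from each color class \(Z_j\), so \(\card{S_i}=r\). Since \(S_i\) and \(S_\ell\) are disjoint sets of basis vectors, their coordinate supports are disjoint.

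Next I describe the two convex hulls. Every \(x\in\conv S_i\) has non-negative coordinates, support inside the \(r\) coordinates indexed by \(S_i\), and \(\norm{x}_1=1\); the same holds for \(y\in\conv S_\ell\). Disjoint supports give
\[
        \norm{x-y}_p^p=\norm{x}_p^p+\norm{y}_p^p .
\]
By Jensen's (or H\"older's) inequality, a vector supported on \(r\) coordinates with \(\ell_1\)-norm one satisfies \(\norm{x}_p\ge r^{-1+\frac1p}\), with equality at the centroid. Hence
\[
        \norm{x-y}_p\ge \parenth{2r^{1-p}}^{1/p}=2^{1/p}r^{-1+\frac1p}=D_{\vtv}\,r^{-1+\frac1p}.
\]
For \(p=1\) the same computation works directly: \(\norm{x-y}_1=2\). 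This gives the first claim.

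For the consequence, take any \(x_0\) and suppose \(\max_m\dist{x_0}{\conv S_m}<\frac12 D_{\vtv}r^{-1+\frac1p}\). Since \(k\ge2\), there are at least two parts \(S_1,S_2\). The triangle inequality then gives points of \(\conv S_1\) and \(\conv S_2\) at distance less than \(D_{\vtv}r^{-1+\frac1p}\), contradicting the first claim. Dividing by \(D_{\vtv}\) and noting that both the point \(x_0\) and the partition were arbitrary yields the bound on \(\vtv_{\ell_p^{rk}}(r,k)\).

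There is no real obstacle here. The only points needing care are that the minimal \(\ell_p\)-norm on a simplex face of \(r\) vertices is attained at its centroid, and the \(p=1\) endpoint.
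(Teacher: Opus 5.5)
Your proposal is correct and follows essentially the same route as the paper: disjoint coordinate supports of two distinct transversals $S_i,S_\ell$, the Jensen bound $\sum_j a_j^p \ge r^{1-p}$ on each simplex, and then the triangle inequality to pass from pairwise separation to the bound on $\max_m \dist{x_0}{\conv S_m}$. The paper writes the transversals via permutations $\sigma_j$ and packages the triangle-inequality step as \Href{Lemma}{lem:separated-transfer} applied to the identity map, but these are only cosmetic differences.
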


\begin{proof}
Let
\[
        S
        =
        S_1 \sqcup \cdots \sqcup S_k
\]
be an arbitrary colorful \(k\)-partition of \(S = \bigcup\limits_{j \in [r]} Z_j\).
For each \(j \in [r]\), the partition assigns the \(k\) points of \(Z_j\) to
the \(k\) classes \(S_1,\dots,S_k\).  Hence, there is a permutation
\(\sigma_j\) of \([k]\) such that
\[
        S_i
        =
        \braces{e_{j,\sigma_j(i)} \st j \in [r]}
        \qquad\text{for every } i \in [k].
\]

Fix two distinct indices \(i,\ell \in [k]\).  Take arbitrary points
\[
        x
        =
        \sum\limits_{j \in [r]} a_j e_{j,\sigma_j(i)}
        \in
        \conv S_i,
        \qquad
        y
        =
        \sum\limits_{j \in [r]} b_j e_{j,\sigma_j(\ell)}
        \in
        \conv S_\ell,
\]
where
\[
        a_j,b_j \ge 0,
        \qquad
        \sum\limits_{j \in [r]} a_j
        =
        \sum\limits_{j \in [r]} b_j
        =
        1.
\]
Since \(\sigma_j(i) \ne \sigma_j(\ell)\) for every \(j \in [r]\), the supports
of \(x\) and \(y\) are disjoint.  Therefore
\[
        \norm{x - y}_p^p
        =
        \sum\limits_{j \in [r]} a_j^p
        +
        \sum\limits_{j \in [r]} b_j^p.
\]
By Jensen's inequality,
\[
        \sum\limits_{j \in [r]} a_j^p
        \ge
        r\parenth{\frac{1}{r}}^p
        =
        r^{1-p},
        \qquad
        \sum\limits_{j \in [r]} b_j^p
        \ge
        r\parenth{\frac{1}{r}}^p
        =
        r^{1-p}.
\]
Thus,
\[
        \norm{x - y}_p
        \ge
        \parenth{2r^{1-p}}^{\frac{1}{p}}
        =
        2^{\frac{1}{p}}r^{\frac{1}{p} - 1}
        =
        2^{\frac{1}{p}}r^{-1 + \frac{1}{p}}
        =
        D_{\vtv} \, r^{-1 + \frac{1}{p}}.
\]
Since \(x \in \conv S_i\) and \(y \in \conv S_\ell\) were arbitrary, we get
\[
        \dist{\conv\, S_i}{\conv\, S_\ell}
        \ge
         D_{\vtv} \, r^{-1 + \frac{1}{p}}.
\]

Applying \Href{Lemma}{lem:separated-transfer} to the identity map on
\(\ell_p^{rk}\) with
\(
        C_m = \conv\, S_m
\)
for 
\(
        m \in [k],
\)
and 
\(
        \rho = D_{\vtv} r^{-1 + \frac{1}{p}},
\)
we see that, for every
\[
        0 < \alpha < \frac{1}{2}D_{\vtv} r^{-1 + \frac{1}{p}}, 
\]
the \(\alpha\)-neighborhoods of the sets
\(
        \conv\, S_1,\dots,\conv\, S_k
\)
are pairwise disjoint.  Since \(k \ge 2\), no point of \(\ell_p^{rk}\) can
belong to all of these neighborhoods.  Therefore, for every
\(x_0 \in \ell_p^{rk}\),
\[
        \max_{m \in [k]} \dist{x_0}{\conv\, S_m}
        \ge
        \frac{1}{2}D_{\vtv} r^{-1 + \frac{1}{p}}.
\]
Dividing by \( D_{\vtv} \), we conclude that
\[
        \vtv_{\ell_p^{rk}}(r,k)
        \ge
        \frac{1}{2}r^{-1 + \frac{1}{p}}.
\]
\end{proof}

\begin{thm}
\label{thm:tverberg-sup-type}
Let \(X\) be an infinite-dimensional Banach space, and let \(p_X\) be the
supremal type of \(X\). 
Then, for every \(r\) and every \(k \ge 2\),
\[
        \vtv_X(r,k)
        \ge
        \frac{1}{2} r^{-1 + \frac{1}{p_X}}.
\]
  If \(X\) has type \(p_X\), then the dependence \(r^{-1 + \frac{1}{p_X}}\) in the
no-dimensional colorful Tverberg theorem is optimal, for every number
\(k \ge 2\) of parts.
\end{thm}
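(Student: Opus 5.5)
The plan is to transfer the coordinate colorful Tverberg configuration of \Href{Model example}{modelex:tverberg-coordinate} into \(X\), exactly as in the proof of \Href{Theorem}{thm:radon-sup-type}. Fix \(r\), \(k\ge 2\), and \(\delta\in(0,1)\). By \Href{Proposition}{prp:MP}, \(\ell_{p_X}\) is finitely representable in \(X\). Hence there is a linear map \(T:\ell_{p_X}^{rk}\to X\) with
\[
\norm{u}_{p_X}\le\norm{Tu}_X\le(1+\delta)\norm{u}_{p_X}
\qquad\text{for every } u\in\ell_{p_X}^{rk}.
\]
Put \(Z_j'=T(Z_j)\) for \(j\in[r]\). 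Since \(T\) is injective, these are pairwise disjoint sets of cardinality \(k\), and their diameters are at most \((1+\delta)D_{\vtv}=(1+\delta)2^{1/p_X}\). Moreover, every colorful \(k\)-partition of \(\bigcup_j Z_j'\) is the image \(T(S_1)\sqcup\cdots\sqcup T(S_k)\) of a unique colorful \(k\)-partition of the model configuration, and by linearity \(\conv T(S_i)=T(\conv S_i)\).

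Next, I would use the pairwise separation from \Href{Lemma}{lem:tverberg-model}: \(\dist{\conv S_i}{\conv S_\ell}\ge \rho:=D_{\vtv}\,r^{-1+\frac1{p_X}}\) for all \(i\ne\ell\). By \Href{Lemma}{lem:separated-transfer}, applied with \(\mathcal B=[k]\) and \(C_i=\conv S_i\), the sets \(T(\conv S_i)\) are pairwise at distance at least \(\rho\) in \(X\). This is the step that handles the only real subtlety, namely that the candidate center \(x\) may be any point of \(X\) and need not lie in \(T(\ell_{p_X}^{rk})\). If some \(x\in X\) had \(\max_i\dist{x}{T(\conv S_i)}<\rho/2\), then since \(k\ge2\) the triangle inequality applied to two distinct parts would give a distance smaller than \(\rho\) between them, which is a contradiction. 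Hence
\[
\max_i\dist{x}{\conv T(S_i)}\ge\rho/2
\qquad\text{for every } x\in X \text{ and every colorful partition.}
\]

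Dividing by the transferred diameter bound gives
\[
\vtv_X(r,k)\ge \frac{\rho/2}{(1+\delta)D_{\vtv}}=\frac{1}{2(1+\delta)}\,r^{-1+\frac1{p_X}},
\]
and letting \(\delta\to0\) yields the claimed bound. The optimality statement then follows by comparison with the upper bound \(\vtv_X(r,k)\le \frac{2^{1/p}}{1-2^{-1+1/p}}T_p(X)\,r^{-1+\frac1p}\) from \cite{ivanov2021no}, taken with \(p=p_X>1\). When \(p_X>1\) and \(X\) has type \(p_X\), the upper and lower bounds have the same power of \(r\) for every \(k\ge2\).

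I do not expect a genuine obstacle here. The only points needing care are the exterior-center issue, which \Href{Lemma}{lem:separated-transfer} already resolves, and the bookkeeping identifying colorful partitions of the image with those of the model.
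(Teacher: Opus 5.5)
Your proposal is correct and follows the paper's proof essentially step by step. You transfer the block-coordinate configuration via a $(1+\delta)$-embedding $T:\ell_{p_X}^{rk}\to X$, use the pairwise separation of Lemma~\ref{lem:tverberg-model} together with Lemma~\ref{lem:separated-transfer} to exclude exterior centers, bound the transferred diameters by $(1+\delta)2^{1/p_X}$, and let $\delta\to0$. Restricting the optimality comparison to $p_X>1$, where the cited upper bound is finite, is a harmless extra bit of care that the paper leaves implicit.
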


\begin{proof}
Fix \(\delta \in (0,1)\).  By \Href{Proposition}{prp:MP}, there is a linear map
\[
        T : \ell_{p_X}^{rk} \to X
\]
such that
\[
        \norm{u}_{p_X}
        \le
        \norm{Tu}_X
        \le
        (1 + \delta)\norm{u}_{p_X}
        \qquad\text{for every } u \in \ell_{p_X}^{rk}.
\]

Apply \(T\) to the coordinate colorful Tverberg configuration from
\Href{Model example}{modelex:tverberg-coordinate}.  Since \(T\) is injective,
every colorful \(k\)-partition of the transferred configuration is the image
under \(T\) of a colorful \(k\)-partition of the model configuration.

By \Href{Lemma}{lem:tverberg-model} and by \Href{Lemma}{lem:separated-transfer},
the \(\alpha\)-neighborhoods
of the images of the convex hulls of the parts  are pairwise disjoint 
 whenever
\[
        0 < \alpha < \frac{1}{2}D_{\vtv} \, r^{-1 + \frac{1}{p_X}},
        \quad
        \text{where} \quad
 D_{\vtv}
      =
        2^{\frac{1}{p_X}}. 
\]

On the other hand, the upper estimate for \(T\) implies that the diameters of
the transferred color classes are at most
\(
        (1 + \delta)D_{\vtv}.
\)
Therefore,
\[
        \vtv_X(r,k)
        \ge
        \frac{1}{2}\frac{1}{1 + \delta}r^{-1 + \frac{1}{p_X}}.
\]
Letting \(\delta \to 0\), we obtain
\[
        \vtv_X(r,k)
        \ge
        \frac{1}{2}r^{-1 + \frac{1}{p_X}}.
\]
\end{proof}

The case of trivial type follows by applying
\Href{Theorem}{thm:tverberg-sup-type} with \(p_X = 1\). 
\begin{cor}
\label{cor:tverberg-trivial-type}
If \(X\) has trivial type, then, for every \(r\) and every \(k \ge 2\),
\[
        \vtv_X(r,k)
        \ge
        \frac{1}{2}.
\]
Thus, no colorful no-dimensional Tverberg theorem with an error tending to zero
as \(r \to \infty\) can hold in any space of trivial type.
\end{cor}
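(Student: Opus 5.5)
The statement to be proved is Corollary~\ref{cor:tverberg-trivial-type}. I will specialize Theorem~\ref{thm:tverberg-sup-type} to the case \(p_X = 1\).

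If \(X\) has trivial type, then by definition its supremal type is \(p_X = 1\). I also need \(X\) to be infinite-dimensional, since Theorem~\ref{thm:tverberg-sup-type} and Proposition~\ref{prp:MP} assume it. This holds because every finite-dimensional space is isomorphic to a Euclidean space and so has type \(2\), hence does not have trivial type.

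Substituting \(p_X = 1\) into Theorem~\ref{thm:tverberg-sup-type} gives the exponent \(-1 + \frac{1}{p_X} = 0\). So for every \(r\) and every \(k \ge 2\),
\[
        \vtv_X(r,k) \ge \tfrac{1}{2} r^{0} = \tfrac12 .
\]
The proof of Theorem~\ref{thm:tverberg-sup-type} is valid at \(p_X=1\). The Jensen step in Lemma~\ref{lem:tverberg-model} becomes the identity \(\sum_j a_j = 1\), so the disjoint-support computation still gives
\[
        \dist{\conv S_i}{\conv S_\ell} \ge 2 = D_{\vtv}.
\]
The embedding of \(\ell_1^{rk}\) into \(X\) comes from Proposition~\ref{prp:MP} with \(p_X = 1\), that is, from finite representability of \(\ell_1\).

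The qualitative consequence then follows at once. The lower bound \(\tfrac12\) does not depend on \(r\), so \(\vtv_X(r,k)\) cannot tend to \(0\) as \(r\to\infty\) for any fixed \(k\ge2\). Hence no no-dimensional colorful Tverberg theorem holds in \(X\).

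The only step needing care is the infinite-dimensionality hypothesis; the rest is direct substitution.
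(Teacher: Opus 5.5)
Your proposal is correct and matches the paper's argument: the corollary is obtained by specializing Theorem~\ref{thm:tverberg-sup-type} to \(p_X = 1\), where the bound becomes \(\frac{1}{2}r^{0} = \frac{1}{2}\). Your explicit check that trivial type forces \(X\) to be infinite-dimensional, because finite-dimensional spaces have type \(2\), is a useful detail that the paper leaves implicit.
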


\begin{rem}
The same block-coordinate example also applies to the version in which the
normalization is taken with respect to the diameters of the parts of the final
partition, rather than with respect to \(\max_j \diam Z_j\).  Indeed, in the
model configuration each transversal \(S_i\) has diameter \(2^{1/p}\), the same
as the color classes.  Thus the lower bound above also gives the endpoint
obstruction for the second colorful no-dimensional Tverberg problem considered
in \cite[Section~5]{polyanskii2024}.
\end{rem}

\section{Selection lemma lower bounds}
\label{sec:selection}

Recall that, for \(0 < \theta < 1\), the selection parameter \(\vsel_X(r,\theta)\)
is the least normalized radius which guarantees a point piercing the convex
hulls of a prescribed fraction of all \(r\)-element subsets.  Equivalently,
\[
        \vsel_X(r,\theta)
        =
        \sup_P
        \inf_{x \in X}
        \inf\braces{\alpha > 0 \st
        \enorm{\braces{A \in \binom{P}{r} \st
        \conv A \cap \parenth{x + \alpha \diam(P) \ball{}_{X}} \ne \emptyset}}
        \ge
        \theta \binom{\card P}{r}},
\]
where the supremum is over finite sets \(P \subset X\) with
\(\card P \ge r\) and positive diameter, 
and \( \binom{P}{r}\) denotes the set of all $r$-element subsets of $P$.  Thus, a no-dimensional selection lemma
in \(X\) asks whether one can choose numbers \(0 < \theta_r < 1\) so that
\(\vsel_X(r,\theta_r) \xrightarrow[r\to\infty]{} 0\).  The coordinate example below gives the sharp
lower bound for this radius.

\begin{modelex}[Coordinate selection configuration]
\label{modelex:selection-coordinate}
Let \(1 \le p \le 2 \)  and \(N \ge r\).  Consider the coordinate basis
\(
        e_1, \dots, e_N
\)
of \(\ell_p^N\), and put
\[
        P_N
        =
        \braces{e_1, \dots, e_N}.
\]
Then
\[
        D_{\vsel}
        :=
        \diam P_N
        =
        2^{\frac{1}{p}}.
\]
For \(A \subset [N]\) of size \( r \), put
\[
        \Delta_A
        =
        \conv \braces{e_i \st i \in A}.
\]
We shall refer to \(P_N\) as the \emph{coordinate selection configuration} in
\(\ell_p^N\).
\end{modelex}

\begin{lem}[The coordinate selection obstruction]
\label{lem:selection-model}
For the configuration from
\Href{Model example}{modelex:selection-coordinate}, if
\(A,B \subset [N]\) are disjoint and
\(\card A = \card B = r\), then
\[
        \dist{\Delta_A}{\Delta_B}
        =
        D_{\vsel}\, r^{-1 + \frac{1}{p}}.
\]
\end{lem}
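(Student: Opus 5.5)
The plan is to run the same argument as in \Href{Lemma}{lem:radon-model} and \Href{Lemma}{lem:tverberg-model}, and to prove the claimed equality as two matching inequalities. We work with disjoint supports and Jensen's inequality for the lower bound, and with the centroids for the upper bound.

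\emph{Lower bound.} Take arbitrary points
\(x = \sum_{i\in A} a_i e_i \in \Delta_A\) and \(y = \sum_{i\in B} b_i e_i\in\Delta_B\), where \(a_i,b_i\ge0\) and \(\sum_{i \in A} a_i = \sum_{i \in B} b_i = 1\). Since \(A\cap B=\emptyset\), the supports of \(x\) and \(y\) are disjoint, and so
\[
        \norm{x-y}_p^p = \sum_{i\in A} a_i^p + \sum_{i\in B} b_i^p .
\]
By Jensen's inequality (convexity of \(t\mapsto t^p\) for \(p\ge1\)), each sum is at least \(r\cdot r^{-p} = r^{1-p}\). Hence
\[
        \norm{x-y}_p \ge \parenth{2r^{1-p}}^{1/p} = 2^{1/p} r^{-1+\frac1p} = D_{\vsel}\, r^{-1+\frac1p}.
\]
This is the same computation as in \Href{Lemma}{lem:tverberg-model}. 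Taking the infimum over \(x\) and \(y\) gives \(\dist{\Delta_A}{\Delta_B} \ge D_{\vsel}\, r^{-1+\frac1p}\).

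\emph{Upper bound.} Take \(x\) and \(y\) to be the centroids of \(\{e_i : i\in A\}\) and \(\{e_i : i\in B\}\). Then all coordinates on \(A\cup B\) equal \(\pm 1/r\), and a direct computation gives
\[
        \norm{x-y}_p = \parenth{2r\cdot r^{-p}}^{1/p} = D_{\vsel}\, r^{-1+\frac1p}.
\]
So the infimum is attained at the centroids, and the equality follows. This holds for every \(p\in[1,2]\), including \(p=1\), where both sides equal \(2\).

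There is no real obstacle here. The only point needing care is the disjointness of \(A\) and \(B\), which is exactly what makes the \(p\)-th power of the norm split into the two sums.
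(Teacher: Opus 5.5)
Your proof is correct and follows essentially the same route as the paper. Disjoint supports split $\norm{x-y}_p^p$ into $\norm{x}_p^p+\norm{y}_p^p$, and each term is minimized at the centroid with value $r^{1-p}$. Your explicit Jensen step, together with the separate check that the pair of centroids attains the bound, spells out what the paper asserts in one line.
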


\begin{proof}
Let \(A,B \subset [N]\) be disjoint \(r\)-element subsets.  If
\(u \in \Delta_A\) and \(v \in \Delta_B\), then \(u\) and \(v\) have
non-negative coordinates, disjoint supports, and
\(
        \norm{u}_1
        =
        \norm{v}_1
        =
        1.
\)
Hence,
\[
        \norm{u - v}_p^p
        =
        \norm{u}_p^p + \norm{v}_p^p.
\]
The minimum of \(\norm{u}_p\) over \(\Delta_A\) is attained at the centroid of
\(\Delta_A\), and equals
\(
        r^{-1 + \frac{1}{p}}.
\)
The same holds for \(\Delta_B\).  Therefore
\[
        \dist{\Delta_A}{\Delta_B}
        =
        \parenth{2r^{1-p}}^{1/p}
        =
        2^{\frac{1}{p}}r^{-1 + \frac{1}{p}}
        =
        D_{\vsel}\, r^{-1 + \frac{1}{p}}.
\]

\end{proof}

\begin{thm}[Optimality of the selection radius]
\label{thm:selection-sup-type}
Let \(X\) be an infinite-dimensional Banach space, and let \(p_X\) be the
supremal type of \(X\). 
Then, for every \(r\) and every \(0 < \theta < 1\),
\[
        \vsel_X(r,\theta)
        \ge
        \frac{1}{2}r^{-1 + \frac{1}{p_X}}.
\]
If  \(X\) has type \(p_X\), then the dependence
\(r^{-1 + \frac{1}{p_X}}\) in the no-dimensional selection lemma is optimal.
\end{thm}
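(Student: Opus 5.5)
The plan is to transfer the coordinate selection configuration of \Href{Model example}{modelex:selection-coordinate} into \(X\) via \Href{Proposition}{prp:MP}. Then \Href{Lemma}{lem:selection-model} together with \Href{Lemma}{lem:separated-transfer} shows that any small ball can only meet the simplices \(T(\Delta_A)\) for an \emph{intersecting} family of \(r\)-sets \(A\). The Erd\H{o}s--Ko--Rado theorem then bounds the size of such a family by a fraction of all \(r\)-sets that tends to zero as \(N \to \infty\).

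Concretely, fix \(r\), \(0<\theta<1\) and \(\delta\in(0,1)\). Choose \(N \ge 2r\) so large that \(r/N < \theta\). By \Href{Proposition}{prp:MP} there is a linear \(T:\ell_{p_X}^N\to X\) with \(\norm{u}_{p_X}\le\norm{Tu}_X\le(1+\delta)\norm{u}_{p_X}\). Put \(P = T(P_N)\). This is a set of \(N\ge r\) points with positive diameter, and \(\diam P\le(1+\delta)D_{\vsel}\) with \(D_{\vsel}=2^{1/p_X}\). Since \(T\) is injective and linear, the \(r\)-point subsets of \(P\) are exactly the sets \(T(\{e_i\st i\in A\})\) with \(A\in\binom{[N]}{r}\), and their convex hulls are \(T(\Delta_A)\).

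Now let \(\alpha>0\) satisfy \((1+\delta)\alpha<\frac12 r^{-1+\frac1{p_X}}\), and let \(x\in X\) be arbitrary. Then the radius satisfies
\[
\alpha\diam P\le(1+\delta)\alpha D_{\vsel}<\tfrac12 D_{\vsel}\,r^{-1+\frac1{p_X}}=\tfrac{\rho}{2},
\qquad \rho = D_{\vsel}\,r^{-1+\frac1{p_X}}.
\]
By \Href{Lemma}{lem:selection-model}, \(\dist{\Delta_A}{\Delta_B}=\rho\) for disjoint \(A,B\). Apply \Href{Lemma}{lem:separated-transfer}, with \(\mathcal B\) any family of pairwise disjoint \(r\)-sets. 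It shows that the \((\alpha\diam P)\)-neighborhoods of \(T(\Delta_A)\) and \(T(\Delta_B)\) are disjoint, so the ball \(x+\alpha\diam(P)\ball{}_X\) cannot meet both. Hence the family of \(A\) whose simplex is pierced is intersecting. Since \(N\ge2r\), the Erd\H{o}s--Ko--Rado theorem bounds its size by
\[
\binom{N-1}{r-1}=\frac rN\binom Nr<\theta\binom Nr .
\]
Thus no point \(x\) works with this \(\alpha\), and therefore \(\vsel_X(r,\theta)\ge \frac{1}{2(1+\delta)}r^{-1+\frac1{p_X}}\). Letting \(\delta\to0\) gives the claim. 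Optimality under type \(p_X\) follows by comparing with the upper bound \(\vsel_X(r,r^{-r})\lesssim T_{p_X}(X)\,r^{-1+\frac1{p_X}}\) quoted in the introduction; the lower bound holds for every \(\theta\), in particular for \(\theta=r^{-r}\).

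The main (though mild) obstacle is the counting step. The separation argument only excludes simultaneous piercing of \emph{disjoint} pairs, so one genuinely needs a combinatorial bound on intersecting families. Erd\H{o}s--Ko--Rado is the natural tool, and it requires \(N\ge 2r\). A crude alternative, if one wanted to avoid EKR, is the bound \(\le r\binom{N-1}{r-1}\): pick one pierced set \(A_0\), and every other pierced set must meet it. This also gives a fraction \(\le r^2/N\to0\), so the choice of large \(N\) absorbs either version.
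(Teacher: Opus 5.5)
Your proposal is correct and follows the paper's proof essentially step by step. You transfer the coordinate simplex configuration via Proposition~\ref{prp:MP}, then use Lemma~\ref{lem:selection-model} and Lemma~\ref{lem:separated-transfer} to show the pierced family is intersecting, and finish with Erd\H{o}s--Ko--Rado for $N \ge 2r$, $r/N < \theta$; fixing $\delta$ and varying the radius, rather than fixing $\gamma < \tfrac12$ first as the paper does, is only cosmetic, and your cruder $r^2/N$ bound is indeed a valid way to avoid Erd\H{o}s--Ko--Rado.
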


\begin{proof}
It is enough to prove that, for every
\(0 < \gamma < \frac{1}{2}\), every \(0 < \theta < 1\), and every \(r\),
there is a finite set \(P \subset X\) such that no translate of
\[
        \gamma r^{-1 + \frac{1}{p_X}}\diam(P)\ball{}_{X}
\]
intersects the convex hulls of at least
\[
        \theta\binom{\card P}{r}
\]
distinct \(r\)-element subsets of \(P\).

Fix \(0 < \gamma < \frac{1}{2}\).  Choose \(\delta \in (0,1)\) so small that
\[
        \gamma(1 + \delta)
        <
        \frac{1}{2}.
\]
Choose \(N\) such that
\[
        N \ge 2r
        \qquad\text{and}\qquad
        \frac{r}{N} < \theta.
\]
By \Href{Proposition}{prp:MP}, there is a linear map
\[
        T:\ell_{p_X}^{N} \to X
\]
such that
\[
        \norm{u}_{p_X}
        \le
        \norm{Tu}_X
        \le
        (1 + \delta)\norm{u}_{p_X}
        \qquad\text{for every } u \in \ell_{p_X}^{N}.
\]
Apply \(T\) to the coordinate selection configuration from
\Href{Model example}{modelex:selection-coordinate}, and put
\[
        P
        =
        T\!\parenth{P_N}.
\]
The lower estimate for \(T\) implies that \(T\) is injective, and hence
\(\card P = N\).  Moreover,
\[
        \diam P
        \le
        (1 + \delta)D_{\vsel},
        \qquad
        D_{\vsel}
        =
        2^{\frac{1}{p_X}}.
\]

Assume, to the contrary, that there are a point \(x_0 \in X\) and a family
\(\mathcal A \subset \binom{[N]}{r}\) such that
\[
        \card{\mathcal A}
        \ge
        \theta\binom{N}{r}
\]
and, for every \(A \in \mathcal A\),
\[
        T\!\parenth{\Delta_A}
        \cap
        \parenth{x_0 + \gamma r^{-1 + \frac{1}{p_X}}\diam(P)\ball{}_{X}}
        \ne
        \emptyset .
\]
We claim that \(\mathcal A\) is an intersecting family.  Indeed, suppose that
\(A,B \in \mathcal A\) are disjoint.  By \Href{Lemma}{lem:selection-model} and by \Href{Lemma}{lem:separated-transfer},
\[
        \dist{T\!\parenth{\Delta_A}}{T\!\parenth{\Delta_B}}
        \ge
        D_{\vsel}\, r^{-1 + \frac{1}{p_X}}.
\]
On the other hand, both sets meet the ball with center \(x_0\) and radius
\[
        \gamma r^{-1 + \frac{1}{p_X}}\diam(P)
        \le
        \gamma(1 + \delta)D_{\vsel}\, r^{-1 + \frac{1}{p_X}}
        <
        \frac{1}{2}D_{\vsel}\, r^{-1 + \frac{1}{p_X}},
\]
which is impossible.  Thus, \(\mathcal A\) is intersecting.

By the Erd\H{o}s--Ko--Rado theorem \cite{erdos1961intersection}, since
\(N \ge 2r\), every intersecting family of \(r\)-subsets of \([N]\) has
cardinality at most
\[
        \binom{N - 1}{r - 1}
        =
        \frac{r}{N}\binom{N}{r}
        <
        \theta\binom{N}{r},
\]
contradicting the choice of \(\mathcal A\).  Therefore no such translate
exists.

Since \(\gamma < \frac{1}{2}\) was arbitrary, we get
\[
        \vsel_X(r,\theta)
        \ge
        \frac{1}{2}r^{-1 + \frac{1}{p_X}}.
\]
\end{proof}

\begin{rem}
\Href{Theorem}{thm:selection-sup-type} should not be read as a failure of the
selection lemma in spaces of non-trivial type.  If \(p_X > 1\), then
\(r^{-1 + \frac{1}{p_X}} \to 0\), and the lower bound is compatible with the positive
selection theorem.  The theorem says that no choice of fractions \(\theta_r\)
can force a radius of order \(\littleo{r^{-1 + \frac{1}{p_X}}}\).  When \(p_X = 1\), the
same lower bound is constant, and this is the obstruction to a no-dimensional
selection lemma.
\end{rem}

The case of trivial type follows by applying
\Href{Theorem}{thm:selection-sup-type} with \(p_X = 1\).  Then 
\(
        \frac{1}{2}r^{-1 + \frac{1}{p_X}}
        =
        \frac{1}{2}.
\)

\begin{cor}[Endpoint selection obstruction]
\label{cor:selection-trivial-type}
If \(X\) is a Banach space of trivial type, then, for
every \(r\) and every \(0 < \theta < 1\),
\[
        \vsel_X(r,\theta)
        \ge
        \frac{1}{2}.
\]
Consequently, there is no sequence \(0 < \theta_r < 1\) such that
\(\vsel_X(r,\theta_r) \to 0\).  Thus, no no-dimensional selection lemma can hold
in spaces of trivial type.
\end{cor}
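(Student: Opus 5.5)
The corollary is the case \(p_X = 1\) of \Href{Theorem}{thm:selection-sup-type}, so the plan is to specialize that theorem and then read off the consequence.

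First, I need to be in the setting of the theorem. A space of trivial type is infinite-dimensional, because every finite-dimensional space has type \(2\). Its supremal type is \(p_X = 1\) by definition. So \Href{Theorem}{thm:selection-sup-type} applies. With \(p_X = 1\) the exponent \(-1 + \frac{1}{p_X}\) is \(0\), hence \(r^{-1+\frac{1}{p_X}} = 1\). This gives
\[
\vsel_X(r,\theta) \ge \tfrac12 \qquad \text{for every } r \text{ and every } 0<\theta<1.
\]

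If one wants to check directly that nothing breaks at \(p_X = 1\), the steps of the theorem's proof go through unchanged. \Href{Proposition}{prp:MP} supplies almost-isometric copies of \(\ell_1^N\) in \(X\). \Href{Lemma}{lem:selection-model} with \(p=1\) shows that disjoint simplices \(\Delta_A\) and \(\Delta_B\) are at distance \(D_{\vsel} = 2\). The Erd\H{o}s--Ko--Rado step only uses \(N \ge 2r\) and \(r/N < \theta\), which do not depend on \(p\).

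For the consequence, suppose some sequence \(0<\theta_r<1\) had \(\vsel_X(r,\theta_r)\to 0\). This contradicts the uniform bound \(\vsel_X(r,\theta_r)\ge \frac12\) for all \(r\). Hence no no-dimensional selection lemma holds in \(X\).

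There is no real obstacle here. The only point that needs a line of justification is that trivial type forces infinite dimension, which is what licenses the use of \Href{Theorem}{thm:selection-sup-type}.
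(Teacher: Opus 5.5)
Your proposal is correct and matches the paper's argument, which simply specializes \Href{Theorem}{thm:selection-sup-type} to \(p_X = 1\), where \(\frac{1}{2}r^{-1+\frac{1}{p_X}} = \frac{1}{2}\). Your remark that trivial type forces \(X\) to be infinite-dimensional (every finite-dimensional space has type \(2\)) is left implicit in the paper, but it is exactly what licenses invoking the theorem.
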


\section{Weak \texorpdfstring{\(\eps\)}{epsilon}-net lower bounds}
\label{sec:weak-net}

Recall that the weak \(\eps\)-net error is
\[
        \vnet_X(\eps,M)
        =
        \sup_P
        \inf_{\substack{F \subset X\\ \card{F} \le M}}
        \sup_{\substack{Y \subset P\\ \card{Y} \ge \eps\card{P}}}
        \frac{\dist{F}{\conv Y}}{\diam P},
\]
where the supremum is over finite sets \(P \subset X\) with positive diameter.
Thus, \(F\) is allowed to lie anywhere in the ambient space, and
\(\vnet_X(\eps,M)\) measures how well \(M\) test points can approximate all
convex hulls of subsets of \(P\) containing at least an \(\eps\)-fraction of the
points.

The endpoint result below shows that, in spaces of trivial type, no choice of a
finite cardinality bound can force the weak-net radius to tend to zero.

\begin{modelex}[Coordinate weak-net configuration]
\label{modelex:weak-net-coordinate}
Let \(N \in \N\).  Consider the coordinate basis
\(
        e_1,\dots,e_N
\)
of \(\ell_1^N\), and put
\(
        P_N
        =
        \braces{e_1,\dots,e_N}.
\)
Then
\[
        D_{\vnet}
        :=
        \diam P_N
        =
        2.
\]
For \(A \subset [N]\), put
\[
        \Delta_A
        =
        \conv\braces{e_i \st i \in A}.
\]
We shall refer to \(P_N\) as the \emph{coordinate weak-net configuration} in
\(\ell_1^N\).
\end{modelex}

For \(N,t \in \N\), with \(N \ge 2t\), the \emph{Kneser graph}
\(\KG(N,t)\) is the graph whose vertex set is \(\binom{[N]}{t}\), and in which
\(A\) and \(B\) are adjacent if and only if they are disjoint.  Recall that a
set of vertices is independent if no two of its vertices are joined by an edge.
Thus, an independent set in \(\KG(N,t)\) is the same thing as an intersecting
family of \(t\)-subsets of \([N]\).

We shall use the following classical theorem of Lov\'asz
\cite{lovasz1978kneser}, which determines the chromatic number of the Kneser
graph.
\begin{prp}[Lov\'asz--Kneser theorem]
\label{prp:lovasz-kneser}
Let \(N,t \in \N\) and assume that \(N \ge 2t\).  Then any partition of the
vertices of \(\KG(N,t)\) into independent sets contains at least
\(
        N - 2t + 2
\)
sets.
\end{prp}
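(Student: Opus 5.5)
We would not reprove this classical result from scratch within the paper's own machinery; it is Lov\'asz's theorem \cite{lovasz1978kneser}, and the plan is to follow Greene's short topological proof, which reduces it to the Lyusternik--Schnirelmann--Borsuk theorem. Partitioning the vertices of \(\KG(N,t)\) into independent sets is the same as a proper coloring. So we argue by contradiction: suppose \(\binom{[N]}{t}\) is partitioned into \(d = N-2t+1\) intersecting families \(\mathcal F_1,\ldots,\mathcal F_d\).

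\textbf{Step 1 (placing the ground set).} Place \(N\) points \(v_1,\ldots,v_N\) on the sphere \(S^{d}\subset\R^{d+1}\) in general position with respect to the origin. That is, no \(d+1\) of them lie on a common great hypersphere, i.e.\ on a hyperplane through the origin; a generic choice works. For \(x\in S^d\), let \(H(x)=\braces{y\in S^d \st \iprod{x}{y}>0}\) be the open hemisphere centered at \(x\). Identify a subset \(A\subset[N]\) with \(\braces{v_i \st i\in A}\).

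\textbf{Step 2 (the cover).} For \(i\in[d]\), let \(U_i\) be the set of \(x\in S^d\) such that \(H(x)\) contains all points of some \(A\in\mathcal F_i\). Each \(U_i\) is open, since a strict inequality on finitely many points persists under small perturbation of \(x\). Put \(U_{d+1}=S^d\setminus(U_1\cup\cdots\cup U_d)\), which is closed. The sets \(U_1,\ldots,U_{d+1}\) cover \(S^d\) by \(d+1\) sets, each open or closed. The Lyusternik--Schnirelmann--Borsuk theorem, a consequence of Borsuk--Ulam, then gives an index \(j\) and a point \(x\) with \(x,-x\in U_j\).

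\textbf{Step 3 (deriving the contradiction).} If \(j\le d\), then \(H(x)\) and \(H(-x)\) contain sets \(A,B\in\mathcal F_j\). These hemispheres are disjoint, so \(A\cap B=\emptyset\), contradicting that \(\mathcal F_j\) is intersecting. If \(j=d+1\), then neither \(H(x)\) nor \(H(-x)\) contains any \(t\)-subset of the points. Hence each contains at most \(t-1\) points, and at least \(N-2(t-1)=d+1\) points lie on the great hypersphere \(\braces{y \st \iprod{x}{y}=0}\), contradicting general position. So at least \(d+1=N-2t+2\) independent sets are needed.

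The only nontrivial ingredient is the topological one: the open/closed version of Lyusternik--Schnirelmann. We would quote it as a standard corollary of Borsuk--Ulam, e.g.\ from \cite{Matousek_Using_BU}, rather than prove it; everything else is the elementary bookkeeping above. The one point to verify carefully is that allowing the single closed set \(U_{d+1}\) alongside the open sets \(U_1,\ldots,U_d\) is covered by the version quoted.
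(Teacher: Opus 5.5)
Your proof is correct. It is Greene's argument, and the bookkeeping checks out, including the count $N-2(t-1)=d+1$ in the case $j=d+1$; that is where you use that the families exhaust $\binom{[N]}{t}$. To exclude partitions into fewer than $d$ classes, just pad with empty families.

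The point you flag is harmless. Suppose the closed set $U_{d+1}$ contained no antipodal pair. By compactness, $\delta=\dist{U_{d+1}}{-U_{d+1}}>0$, and the open $\delta/2$-neighborhood of $U_{d+1}$ still contains no antipodal pair. You are then reduced to the all-open Lyusternik--Schnirelmann theorem. There is also a more direct route, assuming no $U_i$ equals $S^d$ (otherwise you are already done). Apply Borsuk--Ulam to $f(x)=\bigl(\dist{x}{S^d\setminus U_i}\bigr)_{i\in[d]}$. At a point with $f(x)=f(-x)$, either some coordinate is positive, so $x,-x\in U_i$, or all coordinates vanish, so $x,-x\in U_{d+1}$. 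This version does not even need $U_{d+1}$ to be closed.

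The paper gives no proof of its own. It quotes Lov\'asz's theorem \cite{lovasz1978kneser} and uses it as a black box in \Href{Theorem}{thm:weak-net-trivial-type}. Lov\'asz's original route goes through the neighborhood complex $\mathcal N(G)$. One proves $\chi(G)\ge \operatorname{conn}\mathcal N(G)+3$ for every graph via Borsuk--Ulam, and then shows that $\mathcal N(\KG(N,t))$ is $(N-2t-1)$-connected. That costs homotopy theory, but it gives a lower-bound method for arbitrary graphs. Your route is specific to Kneser graphs, but it needs only one elementary corollary of Borsuk--Ulam and a general-position point set. It is therefore the natural choice if one wanted the paper to be self-contained.

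Your argument also has a small bonus: it never uses that the $\mathcal F_i$ are disjoint, only that they are intersecting and cover $\binom{[N]}{t}$. This is exactly how the paper applies the proposition, to the possibly overlapping families $\mathcal A_{x_j}$.
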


\begin{thm}[Weak-net endpoint for spaces of trivial type]
\label{thm:weak-net-trivial-type}
Let \(X\) be a Banach space of trivial type.  Then, for
every \(0 < \eps < \frac{1}{2}\) and every \(M \in \N\),
\[
        \vnet_X(\eps,M)
        \ge
        \frac{1}{2}.
\]
Consequently, in spaces of trivial type there is no dimension-free weak
\(\eps\)-net theorem whose approximation radius tends to zero.
\end{thm}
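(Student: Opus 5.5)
Fix \(0<\eps<\frac12\), \(M\in\N\) and \(0<\gamma<\frac12\). I will produce a finite set \(P\subset X\) such that for every \(F\subset X\) with \(\card F\le M\) some \(Y\subset P\) with \(\card Y\ge\eps\card P\) satisfies \(\dist{F}{\conv Y}>\gamma\diam P\). The model is the coordinate weak-net configuration \(P_N\subset\ell_1^N\) from \Href{Model example}{modelex:weak-net-coordinate}, with \(N\) large and \(t=\lceil \eps N\rceil\). Choose \(N\) so large that \(N\ge 2t\), which is possible since \(\eps<\frac12\), and also \(N-2t+2>M\); the latter is possible because \(N-2t+2\ge (1-2\eps)N\to\infty\). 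Choose \(\delta\) with \(\gamma(1+\delta)<\frac12\). Since \(X\) has trivial type, \Href{Proposition}{prp:MP} (with \(p_X=1\)) gives \(T:\ell_1^N\to X\) with \(\norm{u}_1\le\norm{Tu}_X\le(1+\delta)\norm{u}_1\). Put \(P=T(P_N)\). Then \(\card P=N\) and \(\diam P\le 2(1+\delta)\).

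The key geometric fact is the \(\ell_1\) analogue of \Href{Lemma}{lem:selection-model}: if \(A,B\subset[N]\) are disjoint, then \(\dist{\Delta_A}{\Delta_B}=2\). Indeed, points of \(\Delta_A\) and \(\Delta_B\) are nonnegative with disjoint supports and \(\ell_1\)-norm \(1\), so their difference has norm \(2\). By \Href{Lemma}{lem:separated-transfer}, the sets \(T(\Delta_A)\) and \(T(\Delta_B)\) are at distance at least \(2\) in \(X\), and their \(\alpha\)-neighborhoods are disjoint for every \(\alpha<1\).

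Now suppose, for contradiction, that \(F=\{f_1,\dots,f_m\}\) with \(m\le M\) satisfies \(\dist{F}{T(\Delta_A)}\le\gamma\diam P\) for every \(A\in\binom{[N]}{t}\). Each such \(A\) gives \(Y=T(\{e_i\st i\in A\})\), which has size \(t\ge\eps N\) and convex hull \(T(\Delta_A)\). Color each \(A\) by an index \(j\) with \(\dist{f_j}{T(\Delta_A)}\le\gamma\diam P\); the minimum over the finite set \(F\) is attained. Each color class is intersecting. If two disjoint \(A,B\) both lay within distance \(\gamma\diam P\le 2\gamma(1+\delta)<1\) of \(f_j\), the triangle inequality would give \(\dist{T(\Delta_A)}{T(\Delta_B)}<2\), a contradiction. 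Thus we obtain a partition of the vertices of \(\KG(N,t)\) into at most \(M\) independent sets. This contradicts \Href{Proposition}{prp:lovasz-kneser}, since \(M<N-2t+2\).

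Hence \(\vnet_X(\eps,M)\ge\gamma\). Letting \(\gamma\to\frac12\) gives the bound. The main point requiring care is replacing Erd\H{o}s--Ko--Rado by Lov\'asz's theorem: one ball may pierce an intersecting family, but \(M\) balls cannot cover all of \(\binom{[N]}{t}\). The other delicate step is choosing \(t\) so that \(t\ge\eps N\) while the Kneser bound \(N-2t+2\) still exceeds \(M\). This is exactly where the hypothesis \(\eps<\frac12\) is used.
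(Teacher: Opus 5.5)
Your proposal is correct and follows the paper's proof essentially step by step: you use the same \(\ell_1^N\) simplex configuration transferred into \(X\) by Maurey--Pisier, the same choice \(t=\lceil \eps N\rceil\) with \(N-2t+2>M\), the same observation that each test point can only serve an intersecting family of \(t\)-sets, and Lov\'asz's theorem to rule out covering \(\binom{[N]}{t}\) with \(M\) such families. The only difference is cosmetic: you argue by contradiction via a coloring of \(\KG(N,t)\), whereas the paper directly exhibits an uncovered \(t\)-set.
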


\begin{proof}
It is enough to prove that, for every \(0 < \gamma < \frac{1}{2}\), there is a
finite set \(P \subset X\) such that no set \(F \subset X\) with
\(\card{F} \le M\) is a \(\gamma\diam(P)\)-approximate weak \(\eps\)-net for
\(P\).

Fix \(0 < \gamma < \frac{1}{2}\).  Choose \(\delta \in (0,1)\) so small that
\[
        \gamma(1 + \delta)
        <
        \frac{1}{2}.
\]
Choose \(N\) so large that, for
\[
        t
        =
        \lceil \eps N \rceil,
\]
one has
\[
        t \le \frac{N}{2}
        \qquad\text{and}\qquad
        M
        <
        N - 2t + 2.
\]
Since \(X\) has trivial type, \Href{Proposition}{prp:MP} gives a linear map
\[
        T:\ell_1^N \to X
\]
such that
\[
        \norm{u}_1
        \le
        \norm{Tu}_X
        \le
        (1 + \delta)\norm{u}_1
        \qquad\text{for every } u \in \ell_1^N.
\]
Apply \(T\) to the coordinate weak-net configuration and put
\[
        P
        =
        T\!\parenth{P_N}
        =
        \braces{Te_1,\ldots,Te_N}.
\]
The lower estimate for \(T\) makes \(T\) injective, so \(\card P = N\).
Moreover,
\[
        \diam P
        \le
        (1 + \delta)D_{\vnet}
        =
        2(1 + \delta).
\]

For a point \(x \in X\), consider the family
\[
        \mathcal A_x
        =
        \braces{A \in \binom{[N]}{t} \st
        T\!\parenth{\Delta_A}
        \cap
        \parenth{x + \gamma\diam(P)\ball{}_{X}}
        \ne
        \emptyset}.
\]
We claim that \(\mathcal A_x\) is an intersecting family.  Suppose that
\(A,B \in \mathcal A_x\) are disjoint. 
Clearly, the distance between  
\({\Delta_A} \) and \({\Delta_B}\) in $\ell_1$ equals \(2.\)
Therefore, by \Href{Lemma}{lem:separated-transfer},
\[
        \dist{T\!\parenth{\Delta_A}}{T\!\parenth{\Delta_B}}
        \ge
        2.
\]
On the other hand, both sets meet the ball
\(x + \gamma\diam(P)\ball{}_{X}\).  Hence,
\[
        \dist{T\!\parenth{\Delta_A}}{T\!\parenth{\Delta_B}}
        \le
        2\gamma\diam(P)
        \le
        4\gamma(1 + \delta)
        <
        2,
\]
a contradiction.  Thus \(\mathcal A_x\) is intersecting.

Now let \(F = \braces{x_1,\ldots,x_m}\), where \(m \le M\).  The families
\(\mathcal A_{x_j}\), \(j \in [m]\), are independent sets in the Kneser
graph \(\KG(N,t)\).  By \Href{Proposition}{prp:lovasz-kneser},
\[
        m
        <
        N - 2t + 2,
\]
these independent sets cannot cover all vertices of \(\KG(N,t)\).  Hence, there is
\(A \in \binom{[N]}{t}\) which belongs to none of the families
\(\mathcal A_{x_j}\).  Put
\[
        Y
        =
        \braces{Te_i \st i \in A}
        \subset P.
\]
Then
\(
        \card{Y}
        =
        t
        \ge
        \eps N
        =
        \eps\card P,
\)
and
\(
\        \dist{F}{\conv Y}
        >
        \gamma\diam(P).
\)
Thus,
\[
        \vnet_X(\eps,M)
        \ge
        \gamma .
\]
Letting \(\gamma \to \frac{1}{2}\) proves the theorem.
\end{proof}

\section{Helly approximation lower bounds}
\label{sec:helly}
Recall that \(\vhelly_X(k)\) is the infimum of all \(\alpha > 0\) with the
following property: for every finite family \(\mathcal F\) of convex subsets of
\(\ball{}_{X}\), if every subfamily of \(\mathcal F\) of size at most \(k\) has
non-empty intersection, then the \(\alpha\)-neighborhoods of all members of
\(\mathcal F\) have a common point. 

  The role of the model space is now
played by the dual of \(\ell_p^s\).

\begin{lem}
\label{lem:lp-dual-model-helly}
Let \(1 < p \le 2\), and let \(s > k\).  Then
\[
        \vhelly_{(\ell_p^s)^*}(k)
        \ge
        \frac{(s/k)^{1 - \frac{1}{p}} - 1}{s^{1 - \frac{1}{p}} + 1}
        =
        \frac{k^{-1 + \frac{1}{p}} - s^{-1 + \frac{1}{p}}}
        {1 + s^{-1 + \frac{1}{p}}}.
\]
\end{lem}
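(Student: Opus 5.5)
We take the simplest dual analogue of the coordinate configurations: a family of "coordinate half-spaces" cut from the unit ball. Identify \((\ell_p^s)^*\) with \(\ell_q^s\), where \(\frac1p+\frac1q=1\), so that \(\frac1q = 1-\frac1p\). Put \(c = k^{-1/q}\). For each \(i\in[s]\) define
\[
        C_i = \braces{y \in \ball{}_{\ell_q^s} \st y_i \ge c}.
\]
Each \(C_i\) is a convex subset of the unit ball, and \(\mathcal F=\braces{C_1,\dots,C_s}\) is a finite family. The plan is to check the \(k\)-wise intersection hypothesis for \(\mathcal F\) and then bound from below the radius needed for a common approximate point.

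First, the hypothesis. Take any \(I\subset[s]\) with \(\card I\le k\), and set \(y = c\sum_{i\in I} e_i\). Then \(\norm{y}_q = c\,\card{I}^{1/q}\le c k^{1/q}=1\) and \(y_i = c\) for \(i\in I\). Hence \(y\in\bigcap_{i\in I}C_i\), so every subfamily of size at most \(k\) has non-empty intersection.

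Second, the lower bound. Suppose \(x\in\ell_q^s\) satisfies \(\dist{x}{C_i}\le\alpha\) for every \(i\). We use two consequences.
\begin{enumerate}[(a)]
\item Each coordinate functional has norm \(1\) on \(\ell_q^s\), so \(\enorm{x_i-y_i}\le\norm{x-y}_q\). Taking \(y\in C_i\) (up to an arbitrarily small slack, since \(C_i\) is closed in finite dimensions) gives \(x_i\ge c-\alpha\) for every \(i\in[s]\).
\item Since each \(C_i\) lies in the unit ball, \(\norm{x}_q\le 1+\alpha\).
\end{enumerate}
If \(\alpha<c\), then (a) gives \(\norm{x}_q \ge (c-\alpha)s^{1/q}\). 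Combining with (b),
\[
        (c-\alpha)s^{1/q}\le 1+\alpha,
        \qquad\text{that is,}\qquad
        \alpha \ge \frac{c\,s^{1/q}-1}{s^{1/q}+1}.
\]
If \(\alpha\ge c\), the same inequality holds trivially, because its right-hand side is less than \(c\). Substituting \(c\,s^{1/q} = (s/k)^{1-\frac1p}\) and \(s^{1/q}=s^{1-\frac1p}\) gives the first expression in the lemma. Dividing numerator and denominator by \(s^{1-\frac1p}\) gives the second.

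Since \(\vhelly\) is an infimum over admissible \(\alpha\), the bound for this single family yields the claim. The condition \(s>k\) makes the bound positive.

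No step looks genuinely hard. The one point needing care is the choice of \(c\): it must be exactly the level at which \(k\) coordinates fit into the \(\ell_q\) unit ball, because that is what produces the factor \((s/k)^{1-1/p}\). The other point to watch is that the approximating point \(x\) is not assumed to lie in the ball. This is why we use the bound \(\norm{x}_q\le1+\alpha\) from (b) rather than \(\norm{x}_q\le 1\), and it is the source of the \(+1\) and \(+\alpha\) terms in the final expression.
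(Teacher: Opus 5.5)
Your proof is correct and follows the paper's argument essentially step for step. You use the same family of coordinate half-spaces cut from the dual ball at level \(k^{-1+\frac1p}\), the same bound \(x_i\ge c-\alpha\) from the norm-one coordinate functionals, and the same comparison with \(\norm{x}\le 1+\alpha\). The only cosmetic difference is that you bound \(\norm{x}_q\) from below directly from the coordinates, whereas the paper pairs the point with \(\sum_i e_i\) and divides by \(\norm{\sum_i e_i}_p=s^{1/p}\); both give the same estimate.
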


\begin{proof}
Let \(e_1,\dots,e_s\) be the coordinate basis of \(\ell_p^s\).  We use
\(e_i\) as the corresponding coordinate functionals on \((\ell_p^s)^*\).  For
\(i \in [s]\), define
\[
        K_i
        =
        \braces{y \in \ball{}_{(\ell_p^s)^*} \st
        \iprod{e_i}{y} \ge k^{-1 + \frac{1}{p}}}.
\]
If \(J \subset [s]\) and \(\card{J} = k\), then
\[
        y_J
        =
        k^{-1 + \frac{1}{p}}
        \sum\limits_{j \in J} e_j^*
        \in
        (\ell_p^s)^*
\]
belongs to \(\ball{}_{(\ell_p^s)^*}\), and \(y_J \in K_i\) for every
\(i \in J\).  Hence, every \(k\)-subfamily of
\(\braces{K_i}_{i = 1}^s\) intersects.

Assume that the \(\alpha\)-neighborhoods of all \(K_i\)'s have a common point
\(y\).  We shall prove that
\begin{equation}
\label{eq:helly-lp-dual-alpha}
        \alpha
        \ge
        \frac{(s/k)^{1 - \frac{1}{p}} - 1}{s^{1 - \frac{1}{p}} + 1}.
\end{equation}
For each \(i\), choose \(z_i \in K_i\) such that
\(\norm{z_i - y}_{(\ell_p^s)^*} \le \alpha\).  Since
\(\norm{e_i}_{\ell_p^s} = 1\),
\[
        \iprod{e_i}{y}
        \ge
        \iprod{e_i}{z_i} - \alpha
        \ge
        k^{-1 + \frac{1}{p}} - \alpha .
\]
If \(\alpha \ge k^{-1 + \frac{1}{p}}\), then
\eqref{eq:helly-lp-dual-alpha} is already true.  Thus, we may assume that
\(\alpha < k^{-1 + \frac{1}{p}}\).  In this case all coordinates of \(y\) are
bounded below by the positive number \(k^{-1 + \frac{1}{p}} - \alpha\).  Since
\(y\) is \(\alpha\)-close to \(K_1 \subset \ball{}_{(\ell_p^s)^*}\), we have
\(\norm{y}_{(\ell_p^s)^*} \le 1 + \alpha\).  On the other hand, by duality,
\[
        \norm{y}_{(\ell_p^s)^*}
        \ge
        \frac{\iprod{\sum\limits_{i \in [s]} e_i}{y}}
        {\norm{\sum\limits_{i \in [s]} e_i}_{p}}
        \ge
        s^{1 - \frac{1}{p}}
        \parenth{k^{-1 + \frac{1}{p}} - \alpha}.
\]
Consequently,
\[
        1 + \alpha
        \ge
        (s/k)^{1 - \frac{1}{p}}
        -
        s^{1 - \frac{1}{p}} \alpha .
\]
This is equivalent to \eqref{eq:helly-lp-dual-alpha}.
The lemma follows.
\end{proof}

The next elementary lemma is the dual analogue of the transfer principle, that is \Href{Lemma}{lem:separated-transfer} used in
Radon, Tverberg, selection lemma, and weak-net constructions.

\begin{lem}[Quotients generated by subspaces of the dual]
\label{lem:dual-subspace-quotient-helly}
Let \(X\) be a Banach space, let \(Z\) be a finite-dimensional normed space, and
assume that there is a linear map
\[
        T : Z^* \to X^*
\]
such that
\[
        \norm{y}_{Z^*}
        \le
        \norm{Ty}_{X^*}
        \le
        D \norm{y}_{Z^*}
        \qquad\text{for every } y \in Z^* .
\]
Then, for every \(k \in \N\),
\[
        \vhelly_X(k)
        \ge
        \frac{1}{D}\vhelly_Z(k).
\]
\end{lem}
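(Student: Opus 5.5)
The plan is to dualize the embedding \(T\) and obtain a quotient map \(Q : X \to Z\) whose norm is at most \(D\) and which maps the unit ball of \(X\) onto the open unit ball of \(Z\), up to boundary. Then I pull back a bad Helly family in \(Z\) to a bad Helly family in \(X\).

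\emph{Step 1 (the quotient map).} Identify \(Z^{**}=Z\), since \(Z\) is finite-dimensional. Define \(Q : X \to Z\) by \(\iprod{y}{Qx} = \iprod{Ty}{x}\) for \(y \in Z^*\); that is, \(Q\) is the restriction of \(T^*\) to \(X \subset X^{**}\). The upper estimate for \(T\) gives \(\norm{Qx}_Z \le D\norm{x}_X\).

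I claim that \(Q(\ball{}_X)\) contains the open unit ball of \(Z\). I expect this to be the only non-formal step, and I would argue it in three moves:
\begin{enumerate}[(a)]
\item Let \(z \in Z\) with \(\norm{z} \le 1\). The functional \(Ty \mapsto \iprod{y}{z}\) on the subspace \(T(Z^*) \subset X^*\) has norm at most \(1\), because \(\norm{y}_{Z^*} \le \norm{Ty}_{X^*}\). By Hahn--Banach it extends to some \(\xi \in \ball{}_{X^{**}}\) with \(T^*\xi = z\). Hence \(T^*(\ball{}_{X^{**}}) \supset \ball{}_Z\).
\item By Goldstine's theorem, \(\ball{}_X\) is weak\(^*\) dense in \(\ball{}_{X^{**}}\). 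The map \(T^*\) is weak\(^*\)-to-norm continuous, because \(Z\) is finite-dimensional. So the closure of \(Q(\ball{}_X)\) contains \(\ball{}_Z\).
\item The set \(Q(\ball{}_X)\) is convex in a finite-dimensional space, so it contains the interior of its closure. In particular it contains \(t\ball{}_Z\) for every \(t<1\).
\end{enumerate}

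\emph{Step 2 (pulling back a family).} Fix \(0<\eta<1\) and \(\alpha < \vhelly_Z(k)\). By the definition of \(\vhelly_Z(k)\), there is a finite family \(\{C_i\}\) of convex subsets of \(\ball{}_Z\) in which every \(k\)-subfamily intersects, but no point of \(Z\) lies within distance \(\alpha\) of all the \(C_i\). Put
\[
C_i' = Q^{-1}\parenth{(1-\eta)C_i} \cap \ball{}_X .
\]
These are convex subsets of \(\ball{}_X\). I check the \(k\)-wise intersection property as follows. If \(w \in \bigcap_{i\in J}(1-\eta)C_i\) with \(\card J \le k\), then \(w \in (1-\eta)\ball{}_Z\), so by Step 1 there is \(x \in \ball{}_X\) with \(Qx = w\). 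This \(x\) lies in every \(C_i'\) with \(i \in J\).

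\emph{Step 3 (comparing radii).} Suppose \(\beta > \vhelly_X(k)\). Then there is \(x\in X\) with \(\dist{x}{C_i'} \le \beta'\) for all \(i\), for any \(\beta'>\beta\). Since \(Q\) is \(D\)-Lipschitz and \(Q(C_i') \subset (1-\eta)C_i\), we get
\[
\dist{Qx}{(1-\eta)C_i} \le D\beta' \qquad \text{for all } i .
\]
Rescaling by \((1-\eta)^{-1}\), the point \(Qx/(1-\eta)\) lies within \(D\beta'/(1-\eta)\) of every \(C_i\). By the choice of the family this forces
\[
\alpha < \frac{D\beta'}{1-\eta}.
\]
Letting \(\beta' \to \beta\), \(\beta \to \vhelly_X(k)\), \(\eta \to 0\) and \(\alpha \to \vhelly_Z(k)\) yields \(\vhelly_Z(k) \le D\,\vhelly_X(k)\), which is the claim.

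The main obstacle is Step 1(c): ensuring that the pulled-back sets can be realized inside \(\ball{}_X\) exactly, not merely approximately. The shrinking by \(1-\eta\) together with the interior argument handles this.
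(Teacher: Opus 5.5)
Your proof is correct and is essentially the paper's argument: the paper also dualizes \(T\), uses Goldstine's theorem to get a map on \(X\) whose image of the unit ball covers the open unit ball, and pulls back a witnessing family after shrinking it by a factor \(\lambda<1\). The only difference is that you work directly with the single map \(Q=T^*|_X : X\to Z\) of norm at most \(D\). The paper instead factors this map as the norm-one quotient \(Q_E:X\to E^*\), \(E=T(Z^*)\), followed by the \(D\)-isomorphism between \(E^*\) and \(Z\). You also make explicit the Hahn--Banach lifting and the interior-of-a-convex-set step, which the paper leaves implicit in the phrase ``quotient map''.
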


\begin{proof}
Put
\[
        E
        =
        T\!\parenth{Z^*}
        \subset
        X^* .
\]
Then \(E\) is finite-dimensional, and \(T\) is an isomorphism from \(Z^*\) onto
\(E\) with
\[
        \norm{T} \le D,
        \qquad
        \norm{T^{-1}} \le 1 .
\]
Consider the canonical evaluation map
\[
        Q_E : X \to E^*,
        \qquad
        Q_E x \parenth{f}
        =
        \iprod{f}{x},
        \qquad f \in E .
\]
The adjoint map \(Q_E^*:E \to X^*\) is the inclusion of \(E\) into \(X^*\), and
therefore is an isometry.  Equivalently, by Goldstine's theorem,
\(Q_E\parenth{\ball{}_{X}}\) is weak\(^*\)-dense in \(\ball{}_{E^*}\); since
\(E\) is finite-dimensional, this is norm density.  Thus \(Q_E\) is a quotient
map of norm one onto \(E^*\).

We first show the resulting monotonicity.  If \(Q:X \to W\) is a quotient map
of norm one, then
\[
        \vhelly_X(k)
        \ge
        \vhelly_W(k).
\]
Indeed, fix \(0 < \lambda < 1\), and let
\(\braces{C_i}_{i \in I}\) be a finite family of convex subsets of
\(\ball{}_{W}\) witnessing a lower bound for \(\vhelly_W(k)\).  The family
\[
        K_i
        =
        Q^{-1}\!\parenth{\lambda C_i}\cap \ball{}_{X},
        \qquad i \in I,
\]
consists of convex subsets of \(\ball{}_{X}\).  If a subfamily
\(\braces{C_i}_{i \in J}\), \(\card{J} \le k\), has a common point \(w\), then
\(\lambda w \in \lambda\ball{}_{W}\).  Since \(Q\) is a quotient map and
\(\lambda < 1\), there exists \(x \in \ball{}_{X}\) with
\(Qx = \lambda w\).  Hence the corresponding subfamily of the \(K_i\)'s also
intersects.

Moreover, if the \(\alpha\)-neighborhoods of all \(K_i\)'s have a common point
\(x \in X\), then the \(\alpha\)-neighborhoods of all \(\lambda C_i\)'s have the
common point \(Qx\), because \(\norm{Q} \le 1\).  Therefore every obstruction in
\(W\), after scaling by \(\lambda\), gives an obstruction in \(X\).  Letting
\(\lambda \to 1\) proves the monotonicity.

It remains to compare \(E^*\) with \(Z\).  Let
\[
        S
        =
        \parenth{T^{-1}}^* : Z \to E^* .
\]
Since \(Z\) is finite-dimensional, we identify \(Z\) with \(Z^{**}\).  The
operator \(S\) is an isomorphism and satisfies
\[
        \frac{1}{D}\norm{z}_{Z}
        \le
        \norm{Sz}_{E^*}
        \le
        \norm{z}_{Z}
        \qquad\text{for every } z \in Z .
\]
Thus applying \(S\) to a witnessing family in \(Z\) can decrease all normalized
Helly radii by at most the factor \(D\).  More explicitly, if the
\(\beta\)-neighborhoods of the sets \(S (C_i)\) have a common point in \(E^*\),
then applying \(S^{-1}\) gives a common point for the
\(D\beta\)-neighborhoods of the sets \(C_i\) in \(Z\).  Consequently,
\[
        \vhelly_{E^*}(k)
        \ge
        \frac{1}{D}\vhelly_Z(k).
\]
Combining this with the quotient monotonicity for \(Q_E:X \to E^*\), we obtain
\[
        \vhelly_X(k)
        \ge
        \vhelly_{E^*}(k)
        \ge
        \frac{1}{D}\vhelly_Z(k),
\]
as required.
\end{proof}

\begin{thm}
\label{thm:helly-sup-type}
Let \(X\) be an infinite-dimensional Banach space  and let
\(p_{X^*}\) be the supremal type of \(X^*\).  If \(p_{X^*} > 1\), then, for
every \(k \in \N\),
\[
        \vhelly_X(k)
        \ge
        k^{-1 + \frac{1}{p_{X^*}}}.
\]
If \(X^*\) has type \(p_{X^*}\), this matches the exponent in the upper bound
coming from the type \(p_{X^*}\) inequality for \(X^*\).
\end{thm}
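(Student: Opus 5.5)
The plan is to combine the model lower bound of \Href{Lemma}{lem:lp-dual-model-helly} with the dual transfer principle of \Href{Lemma}{lem:dual-subspace-quotient-helly}, using the Maurey--Pisier theorem for the dual space. Put \(p = p_{X^*} > 1\) and fix \(k \in \N\), \(s > k\), and \(\delta \in (0,1)\). Since \(X\) is infinite-dimensional, so is \(X^*\). Hence \Href{Proposition}{prp:MP}, applied to \(E = X^*\), shows that \(\ell_p\) is finitely representable in \(X^*\). In particular, there is a linear map
\[
        T:\ell_p^s \to X^*
        \qquad\text{with}\qquad
        \norm{u}_{p} \le \norm{Tu}_{X^*} \le (1+\delta)\norm{u}_p .
\]

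To apply \Href{Lemma}{lem:dual-subspace-quotient-helly}, we must write the domain \(\ell_p^s\) as \(Z^*\) for a finite-dimensional space \(Z\). We take \(Z = (\ell_p^s)^*\), which is \(\ell_{q}^s\) with \(1/p + 1/q = 1\). Since \(Z\) is finite-dimensional, \(Z^* = (\ell_p^s)^{**}\) is canonically and isometrically identified with \(\ell_p^s\). So \(T\) may be regarded as a map \(Z^* \to X^*\) satisfying the two-sided estimate with \(D = 1+\delta\). The lemma then gives
\[
        \vhelly_X(k) \ge \frac{1}{1+\delta}\,\vhelly_{(\ell_p^s)^*}(k).
\]
Combining this with \Href{Lemma}{lem:lp-dual-model-helly}, which applies because \(1<p\le 2\) and \(s>k\), we obtain
\[
        \vhelly_X(k) \ge \frac{1}{1+\delta}\cdot
        \frac{k^{-1+\frac{1}{p}} - s^{-1+\frac{1}{p}}}{1 + s^{-1+\frac{1}{p}}}.
\]
The left-hand side depends on neither \(s\) nor \(\delta\). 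Since \(p>1\), we have \(s^{-1+1/p} \to 0\), so letting first \(s\to\infty\) and then \(\delta\to0\) yields \(\vhelly_X(k) \ge k^{-1+1/p_{X^*}}\).

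For the final sentence, we compare this with the cited upper bound \(\vhelly_X(k) \le 6T_p(X^*)k^{-1+1/p}\), valid when \(X^*\) has type \(p = p_{X^*}\). The two bounds have the same power of \(k\) and differ only by a constant. The only delicate point is the identification of \(\ell_p^s\) with \(Z^*\) and the direction of the norm inequalities in the dual transfer lemma. Both of these are routine in finite dimensions, so I do not expect a serious obstacle. The substantive work was already done in the two preceding lemmas.
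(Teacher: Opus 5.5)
Your proposal is correct and follows the paper's proof essentially verbatim. You apply \Href{Proposition}{prp:MP} to \(X^*\), then \Href{Lemma}{lem:dual-subspace-quotient-helly} with \(Z=(\ell_{p}^s)^*\) and \(D=1+\delta\), then \Href{Lemma}{lem:lp-dual-model-helly}, and finally pass to the limit in \(s\) and \(\delta\); the order of these two limits is immaterial, and your explicit check that \(X^*\) is infinite-dimensional is a small point the paper leaves implicit.
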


\begin{proof}
Fix \(k \in \N\), \(s > k\), and \(\delta \in (0,1)\).  By
\Href{Proposition}{prp:MP}, there is a linear map
\[
        T : \ell_{p_{X^*}}^s \to X^*
\]
such that
\[
        \norm{u}_{p_{X^*}}
        \le
        \norm{Tu}_{X^*}
        \le
        (1 + \delta)\norm{u}_{p_{X^*}}
        \qquad\text{for every } u \in \ell_{p_{X^*}}^s.
\]
 Applying
\Href{Lemma}{lem:dual-subspace-quotient-helly} with
\(Z = (\ell_{p_{X^*}}^s)^*\), and then using
\Href{Lemma}{lem:lp-dual-model-helly}, we get
\[
        \vhelly_X(k)
        \ge
        \frac{1}{1 + \delta}
        \vhelly_{(\ell_{p_{X^*}}^s)^*}(k)
        \ge
          \frac{1}{1 + \delta}
        \frac{k^{-1 + \frac{1}{p_{X^*}}} - s^{-1 + \frac{1}{p_{X^*}}}}
        {1 + s^{-1 + \frac{1}{p_{X^*}}}}.
\]
Letting first \(\delta \to 0\) and then \(s \to \infty\) gives the claim.
\end{proof}

\begin{cor}
\label{cor:helly-trivial-type}
If \(X^*\) has trivial type, then, for every \(k \in \N\),
\[
        \vhelly_X(k)
        \ge
        \frac{k}{2k - 1}
        \ge
        \frac{1}{2}.
\]
In particular, \(X\) does not have the Helly approximation property.
\end{cor}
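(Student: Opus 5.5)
The plan is to rerun the proof of \Href{Theorem}{thm:helly-sup-type} at the endpoint \(p_{X^*}=1\), with a model adapted to \(\ell_\infty^s\). \Href{Lemma}{lem:lp-dual-model-helly} gives nothing for \(p=1\), since its threshold becomes \(k^{0}=1\). First, \(X\) is infinite-dimensional, because a finite-dimensional space has a dual of type \(2\). So \(X^*\) is infinite-dimensional of trivial type, and \Href{Proposition}{prp:MP} gives, for every \(s\) and \(\delta>0\), a map \(T:\ell_1^s\to X^*\) with \(\norm{u}_1\le\norm{Tu}_{X^*}\le(1+\delta)\norm{u}_1\). Since \(\ell_1^s=(\ell_\infty^s)^*\), \Href{Lemma}{lem:dual-subspace-quotient-helly} with \(Z=\ell_\infty^s\) and \(D=1+\delta\) gives
\[
        \vhelly_X(k)\ge\frac{1}{1+\delta}\vhelly_{\ell_\infty^s}(k).
\]
It then remains to bound \(\vhelly_{\ell_\infty^s}(k)\) from below by a quantity tending to something at least \(k/(2k-1)\) as \(s\to\infty\).

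For the model in \(\ell_\infty^s\), with \(s\ge 2k\), I would take
\[
        K_i=\braces{y\in\ball{}_{\ell_\infty^s}\st y_i\ge 1},\quad i\in[s],
        \qquad
        K_0=\braces{y\in\ball{}_{\ell_\infty^s}\st \tfrac1s\textstyle\sum_{j} y_j\le c},
        \qquad c=\frac{2(k-1)-s}{s}.
\]
All these sets are convex subsets of the unit ball. I would check the \(k\)-wise intersection property in two cases.
\begin{enumerate}[(a)]
\item A subfamily avoiding \(K_0\) contains the all-ones vector.
\item A subfamily consisting of \(K_0\) and at most \(k-1\) sets \(K_i\), \(i\in J\), contains the vector equal to \(1\) on \(J\) and to \(-1\) elsewhere. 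Its mean is at most \(c\), since \(|J|\le k-1\).
\end{enumerate}

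Now suppose the \(\alpha\)-neighborhoods of all these sets share a point \(y\). Each coordinate functional has norm \(1\) on \(\ell_\infty^s\), so \(y_i\ge 1-\alpha\) for every \(i\), and hence the mean of \(y\) is at least \(1-\alpha\). The mean functional is also \(1\)-Lipschitz, so the mean of \(y\) is at most \(c+\alpha\). Combining the two gives
\[
        \alpha\ge \frac{1-c}{2}=1-\frac{k-1}{s}.
\]
Thus \(\vhelly_{\ell_\infty^s}(k)\ge 1-\frac{k-1}{s}\). Letting \(\delta\to0\) and then \(s\to\infty\) yields \(\vhelly_X(k)\ge 1\ge \frac{k}{2k-1}\ge\frac12\). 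This is even stronger than stated, and it is consistent with the trivial bound \(\vhelly_X(k)\le1\) given by the origin.

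The main point needing care is the reduction step. One must confirm that \Href{Lemma}{lem:dual-subspace-quotient-helly} applies with \(Z=\ell_\infty^s\), so that its dual \(\ell_1^s\) is what embeds into \(X^*\) by finite representability. One should also double-check the intersection cases (a) and (b) once more. If the stronger bound \(\ge 1\) were somehow flawed, I would fall back on a more balanced threshold \(c\) and optimize, aiming for exactly \(k/(2k-1)\).
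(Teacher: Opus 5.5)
Your proof is correct, and it takes a different route from the paper. The paper does not argue inside its own framework here. It imports the endpoint construction of \cite[Lemma 4.2]{ivanov2026banachspaceshellyapproximation}, which gives $2k$ compact convex sets directly in $\ball{}_{X}$, every $k$ of them intersecting, with common-neighborhood radius at least $\frac{k}{2k-1}-\eta$. You instead run the $p=1$ analogue of the proof of \Href{Theorem}{thm:helly-sup-type}. You apply \Href{Proposition}{prp:MP} to $X^*$, which is rightly infinite-dimensional, and then \Href{Lemma}{lem:dual-subspace-quotient-helly} with $Z=\ell_\infty^s$ and $Z^*=\ell_1^s$; this reduces everything to a model in $\ell_\infty^s$. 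Your model repairs exactly what breaks in \Href{Lemma}{lem:lp-dual-model-helly} at $p=1$. There, the upper bound on the coordinate sum came from $\norm{y}\le 1+\alpha$, which is useless in $\ell_\infty^s$ because the sum functional has norm $s$. Your extra set $K_0$ supplies that upper bound through the norm-one mean functional instead. Your checks (a) and (b) and the inequality $1-\alpha\le c+\alpha$ are all correct, so $\vhelly_{\ell_\infty^s}(k)\ge 1-\frac{k-1}{s}$; the restriction $s\ge 2k$ is not even needed. For $s=2k-1$, that is with $2k$ sets, this equals $\frac{k}{2k-1}$, so your family naturally generalizes the cited $2k$-set construction. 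Letting $s\to\infty$ gives $\vhelly_X(k)\ge 1$, hence $\vhelly_X(k)=1$ for every $k$ by the trivial upper bound, which is sharper than the stated corollary. This is consistent with \Href{Lemma}{lem:l1-binomial-helly}, whose bound $\frac{s-k}{s+k}$ also tends to $1$, so the fallback in your last sentence is unnecessary. The paper's route buys brevity and compact witnessing sets built directly in $X$. Yours is self-contained, uses only tools already in the paper, and gives the optimal constant.
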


\begin{proof}
This is the endpoint construction from
\cite[Lemma 4.2]{ivanov2026banachspaceshellyapproximation}.  More precisely, if
\(X^*\) has trivial type, then for every \(\eta > 0\) and every \(k \in \N\)
there are compact convex sets
\[
        K_1,\ldots,K_{2k} \subset \ball{}_{X}
\]
such that every \(k\)-subfamily has a common point in \(\ball{}_{X}\), while
\[
        \inf_{y \in X}\max_{i \in [2k]} \dist{y}{K_i}
        \ge
        \frac{k}{2k - 1} - \eta.
\]
By the definition of \(\vhelly_X(k)\), this gives
\[
        \vhelly_X(k)
        \ge
        \frac{k}{2k - 1} - \eta.
\]
Letting \(\eta \to 0\) proves the claim.
\end{proof}

\section{Infratype upper bounds}
\label{sec:infratype-positive}

In this section we obtain new estimates that use infratype rather than
Rademacher type.  
The qualitative threshold is unchanged, because the supremal 
type and the supremal infratype coincide.  Nevertheless, infratype is the more
natural input for the deterministic sign-selection arguments below.

We begin with the Radon estimate stated in the introduction.  The proof is just
the infratype inequality applied to the differences of paired points; this is
why the Radon quantity detects infratype so directly.

\Href{Theorem}{thm:intro-infratype-radon} is a direct corollary of the last assertion of the following lemma. 
\begin{lem}
\label{lem:infratype-radon}
Let \(X\) have infratype \(p > 1\) 
with constant \(\Ip_p(X)\).  Let
\(Z_1, \dots,Z_r \subset X\) be pairwise disjoint sets, each of cardinality
\(2n\), and put \(D_j = \diam Z_j \).  Then, there is a balanced Radon split
\(S = Q_0 \sqcup Q_1\), where \(S = \bigcup_{j\in[r]} Z_j\), such that
\[
        \norm{\centroid{Q_0} - \centroid{Q_1}}_X
        \le
        \frac{\Ip_p(X)}{nr}
        \parenth{n\sum_{j \in [r]} D_j^p}^{\frac{1}{p}}.
\]
Consequently, for every \(n,r \in \N\),
\[
        \vrad_X(2n,r)
        \le
        \Ip_p(X)(nr)^{-1+ \frac{1}{p}}.
\]
\end{lem}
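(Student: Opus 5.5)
Within each color class \(Z_j\), fix an arbitrary pairing of its \(2n\) points into \(n\) pairs \(\{x_{j,a}, y_{j,a}\}\), \(a \in [n]\). This gives \(nr\) difference vectors
\[
        u_{j,a} = x_{j,a} - y_{j,a},
        \qquad
        \norm{u_{j,a}}_X \le D_j .
\]
For any choice of signs \(\theta_{j,a} = \pm 1\), put into \(Q_0\) the point \(x_{j,a}\) if \(\theta_{j,a} = 1\) and \(y_{j,a}\) otherwise, and put the other point of the pair into \(Q_1\). Every such choice is a balanced Radon split: each \(Z_j\) contributes exactly one point of each of its \(n\) pairs to each side. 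Moreover, \(\card{Q_0} = \card{Q_1} = nr\), and
\[
        \centroid{Q_0} - \centroid{Q_1}
        =
        \frac{1}{nr}\sum_{j\in[r]}\sum_{a\in[n]} \theta_{j,a} u_{j,a}.
\]

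Next, apply the infratype inequality to the family \((u_{j,a})\). It gives signs with
\[
        \norm{\sum_{j,a}\theta_{j,a}u_{j,a}}_X
        \le
        \Ip_p(X)\parenth{\sum_{j,a}\norm{u_{j,a}}_X^p}^{1/p}
        \le
        \Ip_p(X)\parenth{n\sum_{j\in[r]}D_j^p}^{1/p}.
\]
Dividing by \(nr\) yields the first assertion of the lemma.

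For the consequence, bound \(D_j \le D = \max_j D_j\). The right-hand side then becomes \(\Ip_p(X)(nr)^{-1}(nrD^p)^{1/p} = \Ip_p(X)(nr)^{-1+1/p}D\). Since \(\centroid{Q_i} \in \conv Q_i\), we have \(\dist{\conv Q_0}{\conv Q_1} \le \norm{\centroid{Q_0}-\centroid{Q_1}}_X\). Taking the supremum over configurations gives the bound on \(\vrad_X(2n,r)\).

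There is no real obstacle here. The only point requiring care is the observation that sign flips within the fixed pairing always produce balanced splits, so the deterministic (minimum over signs) infratype inequality applies directly without any averaging.
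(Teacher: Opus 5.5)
Your proof is correct and is essentially the paper's argument. You pair the points within each color class arbitrarily, apply the infratype inequality to the \(nr\) pair differences, and split each pair according to the chosen sign. You then bound \(\dist{\conv Q_0}{\conv Q_1}\) by the distance between the centroids.
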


\begin{proof}
Pair the points of every color class arbitrarily,
\[
        Z_j=\braces{u_{j,1},v_{j,1},\ldots,u_{j,n},v_{j,n}}.
\]
Put \(y_{j,t}=u_{j,t}-v_{j,t}\).  
By the definition of infratype, we may choose signs
\(\theta_{j,t} = \pm 1\) so that
\[
        \norm{
        \sum_{j \in [r]}\sum_{t \in [n]} 
        \theta_{j,t}y_{j,t}}_X
        \le
        \Ip_p(X)
        \parenth{
        \sum_{j \in [r]}\sum_{t \in [n]}  
        \norm{y_{j,t}}_X^p}^{1/p}
        \le
        \Ip_p(X)\parenth{n
        \sum_{j \in [r]} D_j^p}^{1/p}.
\]
For each pair, put \(u_{j,t}\) in \(Q_0\) and \(v_{j,t}\) in \(Q_1\) if
\(\theta_{j,t} =1 \), and interchange the two points if \(\theta_{j,t} = - 1\).
Then, \(Q_0,Q_1\) form a balanced Radon split and
\[
        \centroid{Q_0} - \centroid{Q_1}
        =
        \frac{1}{nr}
         \sum_{j \in [r]}\sum_{t \in [n]}  \theta_{j,t}y_{j,t}.
\]
This proves the displayed centroid estimate.  Since the two centroids belong to
\(\conv Q_0\) and \(\conv Q_1\), respectively,
\[
        \dist{\conv\, Q_0}{\conv\, Q_1}
        \le
        \norm{\centroid{Q_0} - \centroid{Q_1}}_X.
\]
If \(D = \max_jD_j\), then
\[
        \frac{\dist{\conv\, Q_0}{\conv\, Q_1}}{D}
        \le
        \Ip_p(X)(nr)^{-1+ \frac{1}{p}},
\]
which gives the bound for \(\vrad_X(2n,r)\).
\end{proof}

Taking \(n=1\) and \(Z_j=\{x_j,0\}\) in the stronger estimate above recovers
the infratype inequality itself.  Thus the Radon estimate is not just a
consequence of infratype; it is essentially the same sign-selection phenomenon
written in colorful geometric language.

We next recall the averaging lemma of Kadets and Kadets 
\cite[pp. 133–134, Lemma 3]{kadets1997series}.  

\begin{prp}[No-dimemsional Colorful Carath\'eodory lemma]
\label{lem:artstein-kadets-infratype}
Let \(X\) be a Banach space of infratype \(p>1\) with constant \(\Ip_p(X)\).
Let \(A_1,\dots, A_m \subset X\) be bounded sets, and let
\(b_i\in\conv A_i\) for \(i\in[m]\).  Then, there are points \(a_i\in A_i\)
such that
\[
        \norm{\sum_{i \in[m]} a_i-\sum_{i \in [m]} b_i}_X
        \le
        \frac{2\Ip_p(X)}{2^{1-1/p}-1}
        \parenth{\sum_{i \in [m]} (\diam A_i)^p}^{1/p}.
\]
\end{prp}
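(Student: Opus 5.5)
The plan is a dyadic halving argument. Each averaging step is controlled by one application of the infratype inequality, and the geometric series of these errors produces the factor \(\parenth{2^{1-1/p}-1}^{-1}\).

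\emph{Reduction to finitely many candidates.} By the definition of the convex hull, each \(b_i\) is a finite convex combination \(b_i=\sum_{j}\lambda_{ij}x_{ij}\) with \(x_{ij}\in A_i\). We will only choose \(a_i\) among the \(x_{ij}\), so there are finitely many possible tuples \((a_1,\dots,a_m)\). It therefore suffices to prove the bound up to an additive \(\eta\), for every \(\eta>0\): some tuple then works for a sequence \(\eta\to0\), and that tuple satisfies the exact bound.

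\emph{Dyadic approximation.} Fix a large \(L\). Replace each weight vector \((\lambda_{ij})_j\) by dyadic weights with denominator \(2^L\) that still sum to \(1\) and satisfy \(\abs{\lambda_{ij}-\lambda'_{ij}}\le 2^{-L}\). Put \(b_i'=\sum_j\lambda'_{ij}x_{ij}\). Since the weight differences sum to zero, \(\norm{b_i-b_i'}_X\le C_i2^{-L}\diam A_i\), where \(C_i\) is the number of terms in the representation of \(b_i\). For \(L\) large, the total error \(\sum_i\norm{b_i-b_i'}_X\) is below \(\eta\). Now each \(b_i'\) is the average of a multiset \(M_i^{(L)}\) of \(2^L\) points of \(A_i\), with repetitions allowed.

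\emph{Halving steps.} Go from level \(s=L\) down to \(s=0\). At level \(s\), each \(i\) has a multiset \(M_i^{(s)}\subset A_i\) of size \(2^s\).
\begin{enumerate}[(a)]
\item Split every \(M_i^{(s)}\) arbitrarily into \(2^{s-1}\) pairs \((u,v)\).
\item Apply the infratype inequality once, simultaneously to all differences \(u-v\) over all \(i\) and all pairs. This gives signs \(\theta=\pm1\) with
\[
\norm{\sum\theta(u-v)}_X\le \Ip_p(X)\parenth{2^{s-1}\sum_{i\in[m]}(\diam A_i)^p}^{1/p}.
\]
\item Keep from each pair the point indicated by its sign, which gives \(M_i^{(s-1)}\).
\end{enumerate}
For each \(i\), the average over \(M_i^{(s)}\) minus the average over \(M_i^{(s-1)}\) equals \(2^{-s}\) times the signed sum of \((v-u)\) over that class's pairs. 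Summing over \(i\), the change in \(\sum_i\centroid{M_i}\) at level \(s\) has norm at most
\[
\Ip_p(X)\,2^{-1/p}\,2^{-s(1-1/p)}\parenth{\sum_{i\in[m]}(\diam A_i)^p}^{1/p}.
\]

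\emph{Summation.} After reaching \(s=0\), each \(M_i^{(0)}\) is a single point \(a_i\in A_i\). Summing the geometric series over \(s\ge1\) bounds \(\norm{\sum_i a_i-\sum_i b_i'}_X\) by
\[
\frac{\Ip_p(X)\,2^{-1/p}}{2^{1-1/p}-1}\parenth{\sum_{i\in[m]}(\diam A_i)^p}^{1/p}.
\]
This is at most the claimed bound, since \(2^{-1/p}\le 2\). Adding the \(\eta\) from the dyadic step and using the reduction finishes the argument.

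\emph{Main obstacle.} The key point is to apply the infratype inequality to all color classes jointly at each level, rather than class by class. This joint application is what yields the \(\ell_p\)-sum \(\parenth{\sum_i(\diam A_i)^p}^{1/p}\) instead of a plain sum of diameters. Passing from approximate dyadic weights to an exact conclusion is handled by the finiteness of the candidate set in the reduction step.
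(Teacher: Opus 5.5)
Your argument is correct, and it takes a different route from the paper only in the sense that the paper gives no proof of this proposition: it simply imports it from Kadets--Kadets \cite[pp.~133--134, Lemma~3]{kadets1997series}.

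What you supply is a self-contained proof. It is exactly the colorful version of the dyadic halving the paper uses for \Href{Lemma}{lem:infratype-car-direct}. You pair points within each class, and at each level you apply the infratype inequality once to all $m\,2^{s-1}$ pair differences jointly. This joint application is what produces $\parenth{\sum_i(\diam A_i)^p}^{1/p}$ rather than a plain sum of diameters.

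Your bookkeeping is right. The level-$s$ error $\Ip_p(X)\,2^{-1/p}2^{-s(1-1/p)}\parenth{\sum_i(\diam A_i)^p}^{1/p}$ matches the step error in \Href{Lemma}{lem:infratype-car-direct} for a single class. Summing gives the constant $\frac{2^{-1/p}\Ip_p(X)}{2^{1-1/p}-1}=\frac{\Ip_p(X)}{2(1-2^{-1+1/p})}$. This is smaller than the stated constant by the factor $2^{1+1/p}$ and coincides with the constant of that lemma.

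So the citation buys brevity. Your route buys self-containment, and it shows that the ``slightly better constant'' of the monochromatic lemma persists in the colorful setting. It would improve $K_p(X)$ in \eqref{eq:ccar-infratype-bound}, and hence $B_p(X)$ and the downstream Tverberg, selection and weak-net constants.

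Two minor remarks. First, the existence of the dyadic weights deserves one line. Round the partial sums $\sum_{l\le j}\lambda_{il}$ down to multiples of $2^{-L}$ and take consecutive differences. These are nonnegative, sum to $1$, and differ from $\lambda_{ij}$ by less than $2^{-L}$. Second, your finiteness argument for removing $\eta$ is fine. However, since your constant is strictly below the claimed one, you could just take $\eta$ small when $\sum_i(\diam A_i)^p>0$. If all diameters vanish, each $A_i=\{b_i\}$ and there is nothing to prove.
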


For \(m\in\N\), define the colorful Carath\'eodory number \(\ccar_X(m)\) as the
infimum of all \(\rho \ge 0\) with the following property: for every choice of
bounded sets \(A_1,\ldots,A_m\subset X\), and every choice of points
\(b_i\in\conv A_i\), there are points \(a_i\in A_i\) such that
\[
        \norm{\frac1m\sum_{i=1}^m a_i-
        \frac1m\sum_{i=1}^m b_i}_X
        \le
        \rho\max_{i\in[m]}\diam A_i.
\]
The preceding proposition gives
\begin{equation}
\label{eq:ccar-infratype-bound}
        \ccar_X(m)
        \le
        K_p(X)m^{-1  + \frac{1}{p}},
        \qquad
        K_p(X):=\frac{2\Ip_p(X)}{2^{1 - \frac{1}{p}}-1}.
\end{equation}
In particular, \(\operatorname{Car}_X(m)\le\ccar_X(m)\), by taking all color
classes equal to the same set.

For the ordinary Carath\'eodory number one can also argue directly by dyadic
halving, with a slightly better constant.

\begin{lem}
\label{lem:infratype-car-direct}
If \(X\) has infratype \(p>1\), then, for every \(k\in\N\),
\[
        \operatorname{Car}_X(k)
        \le
        \frac{\Ip_p(X)}
        {2 \parenth{ 1 - 2^{-1 + \frac{1}{p}} } }
        k^{-1 + \frac{1}{p}}.
\]
\end{lem}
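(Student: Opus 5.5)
The plan is to prove the bound by dyadic halving of a large multiset. The sign choices come from the infratype inequality, exactly as in the proof of \Href{Lemma}{lem:infratype-radon}. Fix a bounded set \(P\) with \(D = \diam P > 0\), a point \(a = \sum_{i\in[m]}\lambda_i x_i \in \conv P\), and \(k \in \N\).

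First I reduce to a centroid of a multiset. For \(L \in \N\) put \(N = k 2^L\), and choose non-negative integers \(c_i\) with \(\sum_i c_i = N\) and \(\abs{c_i/N - \lambda_i} \le 1/N\). Let \(W_L\) be the multiset containing \(x_i\) with multiplicity \(c_i\). Then
\[
a - \centroid{W_L} = \sum_i (\lambda_i - c_i/N)(x_i - x_1),
\]
because the coefficients sum to zero. Hence \(\norm{a - \centroid{W_L}}_X \le mD/N\), which tends to \(0\) as \(L \to \infty\).

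Next comes the halving step. Suppose \(W\) is a multiset of \(2s\) points of \(P\). Pair its elements arbitrarily as \((u_t, v_t)\), \(t \in [s]\), and put \(y_t = u_t - v_t\), so that \(\norm{y_t}_X \le D\). By infratype \(p\), there are signs \(\theta_t\) with
\[
\norm{\sum_t \theta_t y_t}_X \le \Ip_p(X)\, s^{1/p} D.
\]
Let \(W'\) consist of \(u_t\) when \(\theta_t = 1\) and of \(v_t\) otherwise, and let \(W''\) be the complementary half. Then
\[
\centroid{W'} - \centroid{W} = \tfrac12\parenth{\centroid{W'} - \centroid{W''}} = \frac{1}{2s}\sum_t \theta_t y_t,
\]
and therefore
\[
\norm{\centroid{W'} - \centroid{W}}_X \le \frac{\Ip_p(X)}{2}\, s^{-1 + \frac1p} D.
\]
Repeated points cause no difficulty, since \(y_t = 0\) for a pair of equal points.

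Now I iterate, starting from \(W_L\) with \(2^L k\) points and halving \(L\) times. The successive half-sizes are \(s = 2^{L-1}k, \dots, 2k, k\). Summing the errors gives
\[
\frac{\Ip_p(X)D}{2}\sum_{j \ge 0} (2^j k)^{-1+\frac1p} = \frac{\Ip_p(X)}{2\parenth{1 - 2^{-1+\frac1p}}}\, k^{-1+\frac1p} D.
\]
The final multiset has \(k\) elements, so its centroid is a convex combination of at most \(k\) distinct points of \(P\) and thus lies in \(\conv_k P\). Combining with the first step,
\[
\dist{a}{\conv_k P} \le \frac{\Ip_p(X)}{2\parenth{1-2^{-1+\frac1p}}}\, k^{-1+\frac1p} D + \frac{mD}{N}.
\]
Letting \(L \to \infty\) yields the claim.

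The only delicate points are bookkeeping ones. The halving has to be done on multisets, which is why \(N\) is taken to be \(k\) times a power of two. The infratype inequality must be applied with the sizes \(s\) of the halves, so that the geometric series starts at \(k^{-1+1/p}\) and produces exactly the stated constant. No serious obstacle is expected.
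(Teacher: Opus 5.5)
Your proposal is correct and follows essentially the same route as the paper. You reduce to the centroid of a multiset of \(2^L k\) points of \(P\), halve it \(L\) times using infratype-chosen signs on the paired differences (each halving to size \(s\) costs at most \(\frac{\Ip_p(X)}{2}s^{-1+\frac1p}D\)), and sum the geometric series; your explicit rounding estimate \(\norm{a-\centroid{W_L}}_X\le mD/N\) is simply a quantified version of the paper's density-and-limit step.
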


\begin{proof}
Let \(P\subset X\) be bounded, put \(D=\diam P\), and fix
\(a\in\conv P\).  It is enough to prove the estimate for \(a\) which is the
average of a finite multiset of points of \(P\), because such averages are dense
in \(\conv P\).  Thus, take \(N=2^s k\) points \(x_1,\ldots,x_N\in P\) whose
average is \(a\).

Suppose that at some stage we have \(2m\) points with average \(z_{2m}\).  Pair
them as \((u_i,v_i)\), \(i\in[m]\).  By the definition of infratype, choose signs so that
\[
        \norm{\sum_{i \in [m]}
        \theta_i(u_i - v_i)}_X
        \le
        \Ip_p(X)m^{\frac{1}{p}}D.
\]
Keeping one point from each pair according to these signs, we obtain \(m\)
points with average \(z_m\) satisfying
\[
        \norm{z_m-z_{2m}}_X
        \le
        \frac{\Ip_p(X)}{2}m^{-1+\frac{1}{p}} D.
\]
Iterating from \(N\) down to \(k\) gives
\[
        \dist{a}{\conv_k P}
        \le
        \frac{\Ip_p(X)}{2}
        \sum_{j = 0}^{\infty}(2^j k)^{-1+ \frac{1}{p}} D
        =
        \frac{\Ip_p(X)}{2\parenth{1-2^{-1+\frac{1}{p}}}}
        k^{-1+\frac{1}{p}}D.
\]
This proves the estimate for uniform averages with denominator \(2^s k\).
Approximating an arbitrary point of \(\conv P\) by such averages and passing
to the limit gives the same bound for every \(a\in\conv P\).
Taking the supremum over \(P\) and \(a\) proves the claim.
\end{proof}

The binary-tree proof of the colorful Tverberg theorem of 
\cite{ivanov2021no} needs a halving estimate
for color classes of arbitrary size, not only for the even sizes used in
\(\vrad_X(2n,r)\).  For odd cardinalities the extra point in each color has to
be chosen coherently.  This is precisely the role of the colorful
Carath\'eodory estimate above.

\begin{lem}[Halving for all cardinalities]
\label{prp:infratype-halving-all-n}
Let \(X\) have infratype \(p>1\) with constant \(\Ip_p(X)\).  Put
\[
        B_p(X)=K_p(X)+\Ip_p(X),
\]
where \(K_p(X)\) is defined in \eqref{eq:ccar-infratype-bound}.  Let
\(m\ge2\), let \(r\in\N\), and let \(Z_1,\ldots,Z_r\subset X\) be pairwise
disjoint sets, each of cardinality \(m\).  Put
\[
        S=\bigcup_{j\in[r]}Z_j,
        \qquad
        D=\max_{j\in[r]}\diam Z_j,
        \qquad
        d=\left\lceil\frac m2\right\rceil .
\]
Then there is a partition \(S=Q_0\sqcup Q_1\) such that, for every \(j\in[r]\),
\[
        \braces{\card{Q_0\cap Z_j},\card{Q_1\cap Z_j}}
        =
        \braces{m-d,d},
\]
and
\[
        \max_{i=0,1}
        \norm{\centroid{Q_i}-\centroid S}_X
        \le
        B_p(X)(rd)^{-1+ \frac{1}{p}}D.
\]
\end{lem}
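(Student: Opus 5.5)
The plan is to treat even and odd \(m\) separately. In both cases the main tool is the infratype Radon estimate, \Href{Lemma}{lem:infratype-radon}. For odd \(m\), the one leftover point in each color class will be chosen coherently by the colorful Carath\'eodory estimate, \Href{Proposition}{lem:artstein-kadets-infratype}.

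\emph{Even case.} Let \(m=2d\). Apply \Href{Lemma}{lem:infratype-radon} with \(n=d\). This gives a balanced split with
\[
\norm{\centroid{Q_0}-\centroid{Q_1}}_X\le \Ip_p(X)(rd)^{-1+\frac1p}D .
\]
Since \(\card{Q_0}=\card{Q_1}\), we have \(\centroid S=\tfrac12(\centroid{Q_0}+\centroid{Q_1})\). Hence each centroid is within \(\tfrac12\Ip_p(X)(rd)^{-1+\frac1p}D\le B_p(X)(rd)^{-1+\frac1p}D\) of \(\centroid S\).

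\emph{Odd case, choosing the extra points.} Let \(m=2d-1\) with \(d\ge2\). Apply \Href{Proposition}{lem:artstein-kadets-infratype} with \(A_j=Z_j\) and \(b_j=\centroid{Z_j}\). Because all color classes have the same size, \(\frac1r\sum_j b_j=\centroid S\). So we get points \(a_j\in Z_j\) whose average \(\bar a\) satisfies
\[
\norm{\bar a-\centroid S}_X\le K_p(X)r^{-1+\frac1p}D .
\]

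\emph{Odd case, splitting the rest.} The sets \(Z_j\setminus\{a_j\}\) have \(2(d-1)\) points each and diameter at most \(D\). Apply \Href{Lemma}{lem:infratype-radon} to them with \(n=d-1\). This gives \(R_0,R_1\) with \(\Delta=\centroid{R_1}-\centroid{R_0}\) and \(\norm{\Delta}_X\le \Ip_p(X)(r(d-1))^{-1+\frac1p}D\). Set
\[
Q_0=R_0,\qquad Q_1=R_1\cup\{a_1,\dots,a_r\},
\]
so each class meets \(Q_0\) in \(d-1=m-d\) points and \(Q_1\) in \(d\) points.

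\emph{Odd case, the centroid bookkeeping.} This is the step needing care. Write \(R=R_0\cup R_1\). Then
\[
\centroid S=\frac{2(d-1)\centroid R+\bar a}{m},\qquad \centroid S-\centroid R=\frac{\bar a-\centroid S}{2(d-1)},
\]
and therefore
\[
\norm{\centroid{Q_0}-\centroid S}_X\le \tfrac12\norm{\Delta}_X+\frac{\norm{\bar a-\centroid S}_X}{2(d-1)} .
\]
For the first term, \((d/(d-1))^{1-\frac1p}\le\sqrt2\) gives \(\tfrac12\norm{\Delta}_X\le \Ip_p(X)(rd)^{-1+\frac1p}D\). For the second term, \(d^{1-\frac1p}\le 2(d-1)\) for \(d\ge2\) gives a bound of \(K_p(X)(rd)^{-1+\frac1p}D\). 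Adding the two yields the bound with \(B_p(X)=K_p(X)+\Ip_p(X)\) for \(Q_0\). Finally, \(\centroid S\) is the convex combination of \(\centroid{Q_0}\) and \(\centroid{Q_1}\) with weights \((d-1)/m\) and \(d/m\). Hence \(\centroid{Q_1}-\centroid S=-\frac{d-1}{d}\bigl(\centroid{Q_0}-\centroid S\bigr)\), which is no larger, and the lemma follows.
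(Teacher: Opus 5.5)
Your proof is correct and follows essentially the paper's route. The even case comes from the infratype Radon lemma. In the odd case you choose one point per color coherently by the colorful Carath\'eodory estimate, split the remaining \(2(d-1)\) points of each color by infratype sign selection, and put the special points into \(Q_1\).

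The only difference is bookkeeping. The paper first translates each \(Z_j\) to have centroid \(0\), bounds \(\centroid{Q_1}\) directly from \(\sum_{x\in Q_1}x=(W+B)/2\), and then passes to \(Q_0\) using \(d/(d-1)\le 2\). You avoid the translation, bound \(\centroid{Q_0}-\centroid{S}\) through \(\centroid{R}\), and then pass to \(Q_1\) with the harmless factor \((d-1)/d\).
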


\begin{proof}
We may translate each color class separately by minus its centroid.  This does
not change the diameters and does not change the quantities
\(\centroid{Q_i}-\centroid S\), because each admissible part takes the same
number of points from every color class.  Thus, we assume that
\(\centroid{Z_j}=0\) for every \(j\), and hence \(\centroid S=0\).

The even case $m = 2d$ follows from \Href{Lemma}{lem:infratype-radon} with a stronger than the asserted estimate.

Now suppose that \(m=2d-1\).  Since \(\centroid{Z_j}=0\), we have
\(0\in\conv Z_j\) for every \(j\).  By \eqref{eq:ccar-infratype-bound}, applied
to the sets \(Z_j\) and the points \(b_j=0\), choose one point \(w_j\in Z_j\)
for each \(j\) such that
\[
        \norm{\sum_{j \in [r]} w_j}_X
        \le
        K_p(X)r^{\frac{1}{p}}D.
\]
Set $W = \sum_{j \in [r]} w_j.$
Remove the points \(w_j\) and pair the remaining \(2d-2\) points of each color.
Writing the pair differences as \(y_{j,t}\), \(t\in[d-1]\), choose signs by
infratype so that
\[
                \norm{\sum_{j \in [r]} 
        \sum_{t \in [d-1]} 
        \theta_{j,t}y_{j,t}}_X
        \le
        \Ip_p(X)\parenth{r(d-1)}^{\frac{1}{p}}D.
\]
We denote the signed sum in the leftmost norm by $B.$
Let \(Q_1\) consist of the special points \(w_j\) and the signed choices from
all pairs; let \(Q_0=S\setminus Q_1\).  Then
\(\card{Q_1\cap Z_j}=d\) and \(\card{Q_0\cap Z_j}=d-1\).  Since the total sum
of all paired points is \(-W\), the sum of the paired points chosen for \(Q_1\)
is \((B-W)/2\).  Hence,
\[
        \sum_{x\in Q_1}x
        =
        \frac{W+B}{2}.
\]
Therefore,
\[
        \norm{\centroid{Q_1}}_X
        \le
        \frac{K_p(X) r^{\frac{1}{p}}D+
        \Ip_p(X)\parenth{r(d-1)}^{\frac{1}{p}}D}{2rd}
        \le
        \frac{B_p(X)}{2}(rd)^{-1+ \frac{1}{p}}D.
\]
Since \(\centroid S=0\),
\[
        rd\,\centroid{Q_1}+r(d-1)\,\centroid{Q_0}=0.
\]
As \(d/(d-1)\le2\), this gives
\[
        \norm{\centroid{Q_0}}_X
        \le
        B_p(X)(rd)^{-1+ \frac{1}{p}}D.
\]
The proof is complete.
\end{proof}

The remaining Carath\'eodory-type estimates are obtained by the same
combinatorial chain as in the last section of \cite{ivanov2021no}.  We include
the bounds only to keep track of the dependence on the infratype constant.

\begin{lem}[Carath\'eodory-type consequences]
\label{prp:infratype-transfer}
Let \(X\) have infratype \(p>1\).  Then, with \(K_p(X)\) and \(B_p(X)\) as
above,
\[
        \vtv_X(r,k)
        \le
        \frac{2B_p(X)}{1-2^{-1+ \frac{1}{p}}}\, 
        r^{-1+\frac{1}{p}},
\quad 
        \vsel_X(r,r^{-r})
        \le
        \parenth{\frac{2B_p(X)}{1-2^{-1+ \frac{1}{p}}}+K_p(X)}
        r^{-1+\frac{1}{p}},
\]
and, for every \(0<\eps<1\),
\[
        \vnet_X\parenth{\eps,r^r\eps^{-r}}
        \le
        \parenth{\frac{2B_p(X)}{1-2^{-1+\frac{1}{p}}}+K_p(X)}
        r^{-1+\frac{1}{p}}.
\]
\end{lem}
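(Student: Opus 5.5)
We follow the binary-tree scheme of \cite{ivanov2021no}, with \Href{Lemma}{prp:infratype-halving-all-n} as the only analytic input. The order is colorful Tverberg first, then the selection lemma from Tverberg, and finally weak $\eps$-nets from the selection lemma.

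\emph{Tverberg.} We may assume $k=2^L$, or handle general $k$ by an unbalanced tree. For general $k$, we split a node carrying $m$ points per color into children carrying $\lceil m/2\rceil$ and $\lfloor m/2\rfloor$ points per color, which is exactly the halving allowed by \Href{Lemma}{prp:infratype-halving-all-n}. Start at the root with $S$, where each color has $m=k$ points. At a node whose set $R$ has $m\ge 2$ points of each color, apply \Href{Lemma}{prp:infratype-halving-all-n}. The color diameters in $R$ are at most $D$. Writing $d=\lceil m/2\rceil$, this gives
\[
\norm{\centroid{Q_i}-\centroid R}\le B_p(X)(rd)^{-1+\frac1p}D.
\]
Continue until every leaf has one point per color; the leaves are then the classes $S_i$ of a colorful $k$-partition. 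Take $x=\centroid S$. For a leaf $S_i$, sum the increments along the path from the root. At depth $\ell$ the parent size $m$ satisfies $\lceil m/2\rceil\ge k2^{-\ell-1}$ or $\ge 1$. Ordering the path from the leaf upward, the $d$'s are at least $1,1,2,4,\dots$, roughly doubling. Hence
\[
\norm{\centroid{S_i}-x}\le B_p(X)\,r^{-1+\frac1p}D\Bigl(1+\sum_{j\ge0}2^{j(-1+\frac1p)}\Bigr)\le \frac{2B_p(X)}{1-2^{-1+\frac1p}}\,r^{-1+\frac1p}D .
\]
The main bookkeeping obstacle is controlling the possibly repeated value $d=1$ near the leaves in the unbalanced case. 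The constant $2$ absorbs one extra term, and we must check that the sizes at least double going up, which holds since $\lfloor m/2\rfloor\ge1$ forces $m\ge 2d-1$. Since $\centroid{S_i}\in\conv S_i$, this bounds $\vtv_X(r,k)$.

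\emph{Selection.} We use the standard counting argument. Let $P$ have $N\ge r$ points. Take $x$ to be a point $a\in\conv_r$-approximating the centroid of $P$. More concretely, we follow \cite{ivanov2021no}: split $P$ into $r$ groups of equal size (up to leftovers), view them as colors, and use the colorful Carath\'eodory bound \eqref{eq:ccar-infratype-bound} to move from centroids to transversals; this produces the extra $K_p(X)$ term. Then apply the Tverberg bound to random $r$-colorful transversal configurations. Each application produces $k$ disjoint $r$-sets whose convex hulls meet the ball of radius
\[
\Bigl(\tfrac{2B_p}{1-2^{-1+1/p}}+K_p\Bigr)r^{-1+\frac1p}\diam P
\]
around a common centre. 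Double counting over all choices shows that some centre pierces at least an $r^{-r}$ fraction of $\binom{P}{r}$. We expect to follow the existing proof verbatim with the new constants.

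\emph{Weak nets.} We use the greedy argument. Given $P$ and $\eps$, apply the selection bound to the current subset $Y$ with $\card Y\ge\eps\card P$ that is not yet pierced. This yields a centre piercing an $r^{-r}$ fraction of $\binom{Y}{r}$, hence at least $r^{-r}\eps^r\binom{\card P}{r}$ of the $r$-subsets of $P$ (up to lower-order terms). Add that centre to $F$ and discard the pierced $r$-sets. Each $r$-subset of a remaining large $Y$ must still be unpierced, so after at most $r^r\eps^{-r}$ steps no large $Y$ survives. This gives the stated bound on $\vnet_X$.
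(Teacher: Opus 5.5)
Your Tverberg step matches the paper's argument: a binary tree, \Href{Lemma}{prp:infratype-halving-all-n} at every split, and centre $x=\centroid S$. Your leaf-upward bookkeeping is valid; the values of $d$ are in fact at least $1,2,3,5,\dots$, which gives the factor $1+\frac{1}{1-2^{-1+1/p}}\le\frac{2}{1-2^{-1+1/p}}$. The greedy weak-net step is also the one the paper invokes.

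The gap is in the selection step. The double counting you describe cannot produce a fraction of $\binom{P}{r}$ that is independent of $\card P$. Take $\card P=kr$ and the fixed centre $\centroid P$. One colorful Tverberg application, for one coloring into $r$ classes of size $k$, certifies only $k$ pairwise disjoint rainbow $r$-sets. A fixed $r$-set is rainbow for exactly a $k^r\binom{kr}{r}^{-1}$ fraction of the colorings. So double counting over colorings only yields $\card{\mathcal G}\ge k\cdot k^{-r}\binom{kr}{r}=k^{1-r}\binom{kr}{r}$, where $\mathcal G$ is the family of pierced $r$-sets. For $r\ge2$ the fraction $k^{1-r}\approx(\card P/r)^{1-r}$ tends to $0$ as $\card P\to\infty$, so it can never give $\theta_r=r^{-r}$ uniformly in $P$. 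Moreover, if $r\nmid\card P$, the centre produced by the tree depends on which leftover points a coloring discards. There is then no single centre to count against.

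The missing idea is to apply the colorful Carath\'eodory lemma after the Tverberg step, to the parts of one fixed Tverberg partition, and to every $r$-tuple of those parts. This is exactly where the additive $K_p(X)$ in the statement comes from.

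Fix one coloring of $kr$ points of $P$, with $k=\lfloor\card P/r\rfloor$. Let $S_1,\dots,S_k$ and $x$ come from your Tverberg argument, so that $\norm{\centroid{S_i}-x}_X\le\frac{2B_p(X)}{1-2^{-1+1/p}}r^{-1+1/p}\diam P$ for every $i$. For each $\{i_1,\dots,i_r\}\in\binom{[k]}{r}$, apply \Href{Proposition}{lem:artstein-kadets-infratype} with $A_j=S_{i_j}$ and $b_j=\centroid{S_{i_j}}$. It gives $a_j\in S_{i_j}$ with $\norm{\frac1r\sum_j a_j-\frac1r\sum_j b_j}_X\le K_p(X)r^{-1+1/p}\diam P$. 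Hence $\conv\{a_1,\dots,a_r\}$ meets the ball of radius $\parenth{\frac{2B_p(X)}{1-2^{-1+1/p}}+K_p(X)}r^{-1+1/p}\diam P$ around the same point $x$.

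Distinct $r$-tuples of parts give distinct $r$-sets, since they meet different parts. So a single centre pierces $\binom{k}{r}$ sets, and $\binom{\lfloor\card P/r\rfloor}{r}\big/\binom{\card P}{r}\to r^{-r}$ as $\card P\to\infty$. It is this count around one fixed centre, not an average over colorings, that yields a fraction of order $r^{-r}$.
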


\begin{proof}
The Tverberg estimate is obtained by the binary-tree halving argument of
\cite[Section~5]{ivanov2021no}, using
\Href{Lemma}{prp:infratype-halving-all-n} at each split.  Along a
root-to-leaf path, the relevant color size is replaced by its ceiling half; the
rounding produces at most two terms at each dyadic scale, giving the factor
\(2/(1-2^{-1+1/p})\).  The selection estimate is the selection step from the
same section, with the additional colorful Carath\'eodory error
\(K_p(X)r^{-1+1/p}\).  The weak-net estimate follows from the usual greedy
argument applied to the selection lemma, again as in \cite[Section~5]{ivanov2021no}.
\end{proof}

We finish the section with the proof of \Href{Theorem}{thm:intro-infratype-helly}. 
We will use the following simple observation that follows from   the standard lifting argument.
\begin{lem}
\label{lem:infratype-quotient}
Let \(Q:E\to Y\) be a quotient map of norm one.  If \(E\) has infratype \(p\)
with constant \(C\), then \(Y\) has infratype \(p\) with constant at most \(C\).
\end{lem}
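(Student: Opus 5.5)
The plan is the standard lifting argument. Let \(y_1,\ldots,y_m\in Y\) be given, and fix \(\eta>0\). Since \(Q\) is a quotient map of norm one, the image of the open unit ball of \(E\) is the open unit ball of \(Y\). Hence for every \(i\) we can choose \(u_i\in E\) with
\[
        Qu_i=y_i,
        \qquad
        \norm{u_i}_E\le(1+\eta)\norm{y_i}_Y .
\]
If \(y_i=0\), we simply take \(u_i=0\).

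Next we apply the infratype \(p\) inequality in \(E\) to the lifted vectors \(u_1,\ldots,u_m\). This produces signs \(\theta_i=\pm1\) with
\[
        \norm{\sum_{i\in[m]}\theta_i u_i}_E
        \le
        C\parenth{\sum_{i\in[m]}\norm{u_i}_E^p}^{1/p}
        \le
        C(1+\eta)\parenth{\sum_{i\in[m]}\norm{y_i}_Y^p}^{1/p}.
\]
Since \(Q\) is linear and \(\norm{Q}\le1\), it follows that
\[
        \min_{\theta'_i=\pm1}\norm{\sum_{i\in[m]}\theta'_i y_i}_Y
        \le
        \norm{Q\parenth{\sum_{i\in[m]}\theta_i u_i}}_Y
        \le
        C(1+\eta)\parenth{\sum_{i\in[m]}\norm{y_i}_Y^p}^{1/p}.
\]

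It remains to remove the factor \(1+\eta\). The left-hand side is a minimum over the finite set \(\{\pm1\}^m\) and does not depend on \(\eta\). Letting \(\eta\to0\) therefore gives the inequality with constant \(C\), so \(\Ip_p(Y)\le C\).

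The only point needing care is the existence of the almost-norm-preserving lifts. This is exactly the definition of a quotient map, \(\norm{y}_Y=\inf\{\norm{u}_E \st Qu=y\}\), so there is no real obstacle.
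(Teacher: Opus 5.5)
Your proof is correct and is the same standard lifting argument the paper invokes without writing out: lift each $y_i$ with norm loss at most $1+\eta$, choose signs by infratype in $E$, push down through $Q$ using $\norm{Q}\le 1$, and let $\eta\to 0$. Since the minimum over the finite set $\{\pm1\}^m$ does not depend on $\eta$, this limit gives $\Ip_p(Y)\le C$.
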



\begin{proof}[Proof of \Href{Theorem}{thm:intro-infratype-helly}]
Let \(\mathcal F\) be a finite family of convex subsets of \(\ball{}_{X}\) such
that every subfamily of size at most \(k\) has a common point.  For every such
subfamily choose one point in its intersection, and let \(E\subset X\) be the
finite-dimensional subspace spanned by all chosen points.  Replacing each
\(K\in\mathcal F\) by \(K\cap E\), we preserve the \(k\)-wise intersection
property and it is enough to find an approximate common point in \(E\).

The restriction map \(X^*\to E^*\) is a quotient map of norm one.  Hence, by
\Href{Lemma}{lem:infratype-quotient}, the space \(E^*\) has infratype \(p\) with
constant at most \(\Ip_p(X^*)\).  Applying
\Href{Lemma}{lem:infratype-car-direct} in \(E^*\), we get
\[
        \operatorname{Car}_{E^*}(k)
        \le
        \frac{\Ip_p(X^*)}{2\parenth{1-2^{-1+1/p}}}
        k^{-1+1/p}.
\]

We may now argue in the finite-dimensional space \(E\).  Replacing the sets by
their closures, define
\[
        f(x)=\max_{K\in\mathcal F}\dist{x}{K\cap E},
        \qquad
        \rho=\min_{x\in E}f(x),
\]
and choose a minimizer \(x_0\).  Since \(f(0)\le1\), we may assume
\(x_0\in2\ball{}_{E}\).  If \(\rho=0\), there is nothing to prove.

The standard separation condition at the minimizer gives finitely many active
sets \(K_s\in\mathcal F\) and functionals \(u_s\in E^*\), \(\norm{u_s} \le 1\), 
such that
\[
        0\in\conv\braces{u_s}
\]
and
\[
        \iprod{u_s}{y-x_0}
        \le
        -\rho
        \qquad\text{for every }y\in K_s\cap E.
\]
By the definition of \(\operatorname{Car}_{E^*}(k)\), applied to the finite set
\(\{u_s\}\) of diameter at most \(2\), we can choose
\(s_1,\ldots,s_\ell\), \(\ell\le k\), and coefficients
\(\lambda_t\ge0\), \(\sum_t\lambda_t=1\), such that
\[
        \norm{\sum_{t \in [\ell]}
        \lambda_t u_{s_t}}_{E^*}
        \le
        2\operatorname{Car}_{E^*}(k).
\]
By the \(k\)-wise intersection assumption, choose
\[
        q\in \ball{}_{E}\cap K_{s_1}\cap\cdots\cap K_{s_\ell}.
\]
Taking the corresponding convex combination of the active inequalities at
\(q\), we obtain
\[
        \rho
        \le
        -\iprod{\sum_{t=1}^\ell\lambda_tu_{s_t}}{q-x_0}
        \le
        2\operatorname{Car}_{E^*}(k)\norm{q-x_0}_E
        \le
        6\operatorname{Car}_{E^*}(k).
\]
The displayed estimate for \(\operatorname{Car}_{E^*}(k)\) gives the desired
bound.
\end{proof}

\section{Dimension strikes back}
\label{sec:dimension_strikes_back}

The preceding sections show that, in spaces of trivial type, the corresponding
no-dimensional errors need not tend to zero.  In finite-dimensional spaces this
cannot be the whole story, because every finite-dimensional Banach space has
non-trivial type $p=2$.  The point is that the dimension must then enter the estimates.
We give a few concrete examples showing that the logarithmic dependence on the
dimension in standard finite-dimensional bounds is unavoidable in the natural
regime.

For example, it was shown in \cite{ivanov2026nodimensionalresults} that
\[
        \vhelly_{\ell_1^n}(k)
        \le
        C\sqrt{\frac{\ln n}{k}}
\]
for some absolute constant \(C\).  Similarly, \cite[Theorem~3.3]{barman2015approximating}
shows that
\[
        \operatorname{Car}_{\ell_\infty^n}(k)
        \le
        C\sqrt{\frac{\ln n}{k}}
\]
for some absolute constant \(C\).  On the other hand, the classical Helly and
Carath\'eodory theorems give
\[
        \vhelly_{\ell_1^n}(n + 1)
        =
        \operatorname{Car}_{\ell_\infty^n}(n + 1)
        =
        0.
\]
Thus the relevant question is what happens between these two regimes.  The
examples below show that the logarithmic term cannot be simply removed: when
the combinatorial parameter is comparable with \(\ln n\), the corresponding
error may still be bounded from below by a positive absolute constant.
\subsection{The Helly sequence in \texorpdfstring{\(\ell_1^n\)}{ell1n}}

The following example gives a finite-dimensional obstruction for the Helly
sequence in \(\ell_1^n\) when the dimension is exponential in \(k\).

\begin{lem}
\label{lem:l1-binomial-helly}
Let \(k,s \in \N\) and \(s > k\).  Then
\[
        \vhelly_{\ell_1^{\binom{s}{k}}}(k)
        \ge
        \frac{s - k}{s + k}.
\]
In particular,
\[
        \vhelly_{\ell_1^{\binom{2k}{k}}}(k)
        \ge
        \frac{1}{3}.
\]
\end{lem}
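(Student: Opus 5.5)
We prove the lemma directly in \(\ell_1^N\), \(N=\binom{s}{k}\), with an explicit family of half-space slices of the unit ball. The argument mirrors \Href{Lemma}{lem:lp-dual-model-helly}, using that \(\ell_1^N\) is the dual of \(\ell_\infty^N\). Index the coordinates by \(J\in\binom{[s]}{k}\), with unit vectors \(e_J\). Fix a parameter \(0<t\le 1\), chosen below. For \(i\in[s]\) define \(f_i\in\ell_\infty^N\) by
\[
        f_i(J)=1 \ \text{ if } i\in J,
        \qquad
        f_i(J)=-t \ \text{ if } i\notin J .
\]
Then \(\norm{f_i}_\infty=1\). Put
\[
        K_i=\braces{y\in\ball{}_{\ell_1^N}\st \iprod{f_i}{y}\ge 1}.
\]
Each \(K_i\) is a convex subset of the unit ball. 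For any \(J\in\binom{[s]}{k}\), the point \(e_J\) satisfies \(\iprod{f_i}{e_J}=1\) for all \(i\in J\), so \(e_J\in\bigcap_{i\in J}K_i\). Hence every subfamily of at most \(k\) of the sets \(K_i\) has a common point.

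Suppose the \(\alpha\)-neighborhoods of all \(K_i\) share a point \(y\). Choose \(z_i\in K_i\) with \(\norm{z_i-y}_1\le\alpha\). Since \(\norm{f_i}_\infty\le1\), we get
\[
        \iprod{f_i}{y}\ge 1-\alpha
        \qquad\text{for every } i\in[s],
\]
and also \(\norm{y}_1\le 1+\alpha\). Summing over \(i\),
\[
        s(1-\alpha)\le\iprod{\textstyle\sum_i f_i}{y}.
\]
For every \(J\) the sum \(\sum_i f_i\) takes the same value \(k-(s-k)t\), since each \(J\) contains exactly \(k\) indices \(i\). Therefore
\[
        s(1-\alpha)\le \enorm{k-(s-k)t}\,(1+\alpha).
\]
Everything thus reduces to choosing \(t\) well.

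If \(s\ge 2k\), take \(t=k/(s-k)\le1\). Then the right-hand side vanishes, which forces \(\alpha\ge1\ge (s-k)/(s+k)\). If \(k<s<2k\), take \(t=1\). Then \(s(1-\alpha)\le(2k-s)(1+\alpha)\), which rearranges to
\[
        \alpha\ge \frac{2(s-k)}{2k}=\frac{s-k}{k}\ge\frac{s-k}{s+k}.
\]
In either case \(\vhelly_{\ell_1^N}(k)\ge (s-k)/(s+k)\). Substituting \(s=2k\) gives the value \(1/3\).

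The only delicate point is the choice of the negative weight \(-t\) off \(J\). With weight \(0\) the sum \(\sum_i f_i\) is too large and the estimate degenerates. Weight \(-t\) makes \(\sum_i f_i\) constant in \(J\) while keeping \(\norm{f_i}_\infty=1\). This preserves the \(k\)-wise intersections at the vertices \(e_J\) and makes the averaged inequality tight.
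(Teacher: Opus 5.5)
Your proof is correct, and it uses the same mechanism as the paper: sum over $i$ one norm-one functional per set, each equal to $1$ on its $K_i$, and use that every $J\in\binom{[s]}{k}$ contains exactly $k$ indices, so $\sum_i f_i$ is constant in $J$. The difference is the negative weight $-t$ off the support. The paper takes $K_i=\conv\{e_A : i\in A\}$ and tests with $f_i=\mathbf{1}_{\{A\,:\,i\in A\}}$, which is your functional with $t=0$. Then $\sum_i f_i=k\mathbf{1}_\Omega$, and one gets $s(1-\alpha)\le k(1+\alpha)$, which is exactly $\alpha\ge\frac{s-k}{s+k}$. So your closing remark is inaccurate: weight $0$ does not degenerate, it proves the lemma as stated.

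What the negative weight buys is a strictly stronger conclusion. For $t<1$, your set $\{y : \norm{y}_1\le 1,\ \iprod{f_i}{y}\ge 1\}$ coincides with the paper's $K_i$. Moreover, every $f_i$ with $0\le t\le 1$ equals $1$ on the paper's $K_i$. So your computation applies verbatim to the paper's own configuration and gives $\alpha\ge 1$ whenever $s\ge 2k$. Since trivially $\vhelly_X(k)\le 1$ (take the common point $0$), you actually obtain $\vhelly_{\ell_1^{\binom{s}{k}}}(k)=1$ for $s\ge 2k$. In particular $\vhelly_{\ell_1^{\binom{2k}{k}}}(k)=1$, rather than merely $\ge\frac{1}{3}$. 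For $k<s<2k$ you obtain the bound $\frac{s-k}{k}$, which is also better than $\frac{s-k}{s+k}$.
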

For \(n = \binom{2k}{k}\), the quantity \(\ln n\) is bounded from above and from
below by positive absolute multiples of \(k\), while
\[
        \vhelly_{\ell_1^n}(k)
        \ge
        \frac{1}{3}.
\]
Thus, the logarithmic dependence on the dimension in the finite-dimensional
estimate cannot be removed.
\begin{proof}
Let
\[
        \Omega
        =
        \braces{A \subset [s] \st \card{A} = k}.
\]
We identify \(\ell_1^{\binom{s}{k}}\) with \(\ell_1(\Omega)\).  For each
\(i \in [s]\), define
\[
        K_i
        =
        \conv\braces{e_A \st A \in \Omega,\ i \in A}
        \subset
        \ball{}_{\ell_1(\Omega)}.
\]
If \(J \subset [s]\) and \(\card{J} = k\), then \(e_J \in K_i\) for every
\(i \in J\).  Hence every subfamily of \(\braces{K_i}_{i = 1}^s\) of size
exactly \(k\) has non-empty intersection.  The same is then true for every
subfamily of size at most \(k\), by extending it to a \(k\)-subfamily.

Assume that the \(\alpha\)-neighborhoods of all \(K_i\)'s have a common point
\(y \in \ell_1(\Omega)\).  For \(i \in [s]\), let \(f_i \in \ell_\infty(\Omega)\)
be given by
\[
        f_i(A)
        =
        \begin{cases}
        1, & i \in A,\\
        0, & i \notin A.
        \end{cases}
\]
Then \(\norm{f_i}_\infty = 1\), and \(\iprod{f_i}{x} = 1\) for every
\(x \in K_i\).  Therefore
\[
        \iprod{f_i}{y}
        \ge
        1 - \alpha
        \qquad\text{for every } i \in [s].
\]
Summing over \(i\), we get
\[
        \iprod{\sum\limits_{i \in [s]} f_i}{y}
        \ge
        s(1 - \alpha).
\]
On the other hand, for every \(A \in \Omega\), exactly \(k\) indices
\(i \in [s]\) belong to \(A\).  Hence
\[
        \sum\limits_{i \in [s]} f_i
        =
        k \mathbf 1_\Omega.
\]
Moreover, since \(y\) is \(\alpha\)-close to \(K_1 \subset \ball{}_{\ell_1(\Omega)}\),
we have \(\norm{y}_1 \le 1 + \alpha\).  Thus
\[
        s(1 - \alpha)
        \le
        \iprod{\sum\limits_{i \in [s]} f_i}{y}
        \le
        k\norm{y}_1
        \le
        k(1 + \alpha).
\]
Consequently,
\[
        \alpha
        \ge
        \frac{s - k}{s + k}.
\]
Taking \(s = 2k\) gives the second assertion.
\end{proof}

\subsection{Examples in \texorpdfstring{\(\ell_\infty^n\)}{ellinftyn}}

We now pass the \(\ell_1\)-examples to cubes.  For \(m \in \N\), define
\[
        \Phi_m : \ell_1^m \to \ell_\infty^{2^m},
        \qquad
        \Phi_m x
        =
        \parenth{\sum_{i = 1}^m \varepsilon_i x_i}_{\varepsilon \in \braces{-1,1}^m}.
\]
Then
\[
        \norm{\Phi_m x}_\infty
        =
        \sum\limits_{i \in [m]} \abs{x_i}
        =
        \norm{x}_1.
\]
Thus every lower-bound configuration in \(\ell_1^m\) gives an isometric
configuration in \(\ell_\infty^{2^m}\).

\begin{lem}[Carath\'eodory lower bounds in cubes]
\label{lem:cube-caratheodory}
Let \(m > k\).  Then
\[
        \operatorname{Car}_{\ell_\infty^{2^m}}(k)
        \ge
        1 - \frac{k}{m}.
\]
In particular,
\[
        \operatorname{Car}_{\ell_\infty^{2^{2k}}}(k)
        \ge
        \frac{1}{2}.
\]
\end{lem}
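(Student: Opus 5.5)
The plan is to transfer the \(p=1\) case of \Href{Lemma}{lem:caratheodory-model} into the cube through the isometric embedding \(\Phi_m\). Take the coordinate Carath\'eodory configuration \(P_m=\braces{e_1,\dots,e_m}\subset\ell_1^m\) from \Href{Model example}{modelex:caratheodory-coordinate} with \(p=1\), together with its centroid \(a_m\). Put
\[
        P=\Phi_m(P_m)\subset \ell_\infty^{2^m}.
\]
Since \(\Phi_m\) is a linear isometry, we have \(\diam P=\diam P_m=2\), and the point \(\Phi_m a_m\) lies in \(\conv P\).

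The key observation is that linearity of \(\Phi_m\) gives \(\conv_k P=\Phi_m(\conv_k P_m)\). So every point of \(\conv_k P\) has the form \(\Phi_m b\) with \(b\in\conv_k P_m\). By the isometry and the \(p=1\) part of \Href{Lemma}{lem:caratheodory-model},
\[
        \norm{\Phi_m a_m-\Phi_m b}_\infty=\norm{a_m-b}_1\ge 2\parenth{1-\frac{k}{m}}.
\]
Taking the infimum over \(b\) bounds \(\dist{\Phi_m a_m}{\conv_k P}\) from below by \(2(1-k/m)\). Dividing by \(\diam P=2\) yields \(\operatorname{Car}_{\ell_\infty^{2^m}}(k)\ge 1-k/m\).

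For the particular case, set \(m=2k>k\), which gives the bound \(1/2\) in \(\ell_\infty^{2^{2k}}\). The only step needing care is the isometry identity \(\norm{\Phi_m x}_\infty=\norm{x}_1\). The upper bound \(\norm{\Phi_m x}_\infty\le\norm{x}_1\) is the triangle inequality. The lower bound comes from choosing \(\varepsilon_i=\operatorname{sign}x_i\), which attains the sum \(\sum_i\abs{x_i}\). The paper already records this identity, so no real obstacle remains. Unlike the infinite-dimensional case, there is no \((1+\delta)\) distortion to send to zero, because the embedding is exact.
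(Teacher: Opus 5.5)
Your proposal is correct and is essentially the paper's own proof: you push the $p=1$ coordinate Carath\'eodory configuration through the isometry $\Phi_m$, observe that the diameter and the distance to $\conv_k$ are preserved, and invoke the $p=1$ case of \Href{Lemma}{lem:caratheodory-model}. Your explicit checks of $\conv_k P=\Phi_m(\conv_k P_m)$ and of $\norm{\Phi_m x}_\infty=\norm{x}_1$ spell out details the paper leaves implicit.
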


\begin{proof}
Apply the isometric embedding \(\Phi_m : \ell_1^m \to \ell_\infty^{2^m}\) to
the coordinate Carath\'eodory configuration from
\Href{Model example}{modelex:caratheodory-coordinate} with \(p = 1\).  Since
\(\Phi_m\) is an isometry, both the distance from the centroid to the
\(k\)-convex hull and the diameter of the configuration are preserved.  The
claim follows from \Href{Lemma}{lem:caratheodory-model}.
\end{proof}

For \(d = 2^{2k}\), the equality \(k = \frac{\ln d}{2\ln 2}\) holds.  Hence the
last lemma shows that the finite-dimensional Carath\'eodory estimate in
\(\ell_\infty^d\) cannot tend to zero at this scale.

\begin{lem}[Radon lower bounds in cubes]
\label{lem:cube-radon}
Let  \(n, r \in \N\).  Then
\[
        \vrad_{\ell_\infty^{2^{2nr}}}(2n,r)
        \ge
        1.
\]
\end{lem}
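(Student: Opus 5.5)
The plan mirrors the proof of \Href{Lemma}{lem:cube-caratheodory}: move the coordinate Radon configuration of \Href{Model example}{modelex:radon-coordinate} with \(p=1\) into the cube by the isometric embedding \(\Phi_{2nr}\). Put \(m = 2nr\). Then \(\Phi_m:\ell_1^{m}\to\ell_\infty^{2^{m}}\) is a linear isometry, with \(\norm{\Phi_m x}_\infty = \norm{x}_1\).

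Take the pairwise disjoint sets \(Z_1,\ldots,Z_r\subset\ell_1^{m}\) of the coordinate Radon configuration. Each \(Z_j\) is to be read as the \(2n\) basis vectors \(e_{j,1},\dots,e_{j,2n}\); the display in the model example lists only \(n\) of them, but the configuration needs cardinality \(2n\). Their images \(\Phi_m(Z_j)\) are pairwise disjoint sets of cardinality \(2n\), because \(\Phi_m\) is injective. Since \(\Phi_m\) is an isometry, each image still has diameter \(D_{\vrad}=2\).

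Every balanced Radon split of \(\bigcup_j \Phi_m(Z_j)\) is the image of a unique balanced Radon split \(Q_0\sqcup Q_1\) of the model configuration. Linearity gives \(\conv \Phi_m(Q_i) = \Phi_m(\conv Q_i)\). Hence, by isometry, the distance between the two image hulls equals \(\dist{\conv Q_0}{\conv Q_1}\) computed in \(\ell_1^m\). Alternatively, \Href{Lemma}{lem:separated-transfer} gives at least the lower bound, which is all we need.

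\Href{Lemma}{lem:radon-model} with \(p=1\) then shows that this distance equals \(2^{1}(nr)^{0}=2\) for every split. Directly, points of \(\conv Q_0\) and \(\conv Q_1\) are probability vectors with disjoint supports, so their \(\ell_1\)-distance is exactly \(2\). Dividing by \(D=2\) gives a normalized separation of \(1\) for every balanced split. Taking the infimum over splits and then the supremum over configurations yields \(\vrad_{\ell_\infty^{2^{2nr}}}(2n,r)\ge 1\).

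There is no real obstacle in this argument. The only points to check are that \(\Phi_m\) commutes with taking convex hulls, which is immediate from linearity, and that the model uses all \(2n\) coordinates in each color class.
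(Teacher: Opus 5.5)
Your proposal is correct and is exactly the paper's argument: push the $p=1$ coordinate Radon configuration through the isometry $\Phi_{2nr}$. This isometry preserves both the convex-hull distances (equal to $2$ for every balanced split) and the color-class diameters (equal to $2$), and your remark that each $Z_j$ must consist of all $2n$ vectors $e_{j,1},\dots,e_{j,2n}$ correctly fixes a typo in the model example.
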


\begin{proof}
Apply the isometric embedding
 \(\Phi_{2nr} : \ell_1^{2nr} \to \ell_\infty^{2^{2nr}}\)
to the coordinate Radon configuration from
\Href{Model example}{modelex:radon-coordinate} with \(p = 1\).  Since the
embedding is isometric, both the distances between the convex hulls and the
diameters of the color classes are preserved.  The lower bound is therefore
\(1\).
\end{proof}

If \(d = 2^{2nr}\), then \(2nr = \frac{\ln d}{\ln 2}\).  Thus the Radon error in
\(\ell_\infty^d\) can still be at least \(1\) when the product \(nr\) is
logarithmic in the dimension.

\begin{lem}[Tverberg lower bounds in cubes]
\label{lem:cube-tverberg}
Let \(r \in \N\) and \(k \ge 2\).  Then
\[
        \vtv_{\ell_\infty^{2^{rk}}}(r,k)
        \ge
        \frac{1}{2}.
\]
\end{lem}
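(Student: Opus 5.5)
The plan is the same as in \Href{Lemma}{lem:cube-caratheodory} and \Href{Lemma}{lem:cube-radon}. We transport the coordinate colorful Tverberg configuration from \Href{Model example}{modelex:tverberg-coordinate} with \(p = 1\) into \(\ell_\infty^{2^{rk}}\) using the isometric embedding \(\Phi_{rk} : \ell_1^{rk} \to \ell_\infty^{2^{rk}}\). Concretely, we take the color classes
\[
        \Phi_{rk}(Z_j) = \braces{\Phi_{rk} e_{j,1},\ldots,\Phi_{rk} e_{j,k}},
        \qquad j \in [r].
\]
Because \(\Phi_{rk}\) is a linear isometry, these are pairwise disjoint sets of cardinality \(k\), each of diameter \(D_{\vtv} = 2\).

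Next, fix an arbitrary colorful \(k\)-partition of the transferred configuration. Since \(\Phi_{rk}\) is injective, this partition is the image of a colorful \(k\)-partition \(S = S_1 \sqcup \cdots \sqcup S_k\) of the model configuration. Linearity gives \(\conv \Phi_{rk}(S_i) = \Phi_{rk}(\conv S_i)\). By \Href{Lemma}{lem:tverberg-model} with \(p=1\), the model hulls satisfy \(\dist{\conv S_i}{\conv S_\ell} \ge D_{\vtv} r^{0} = 2\) for \(i \ne \ell\). Then \Href{Lemma}{lem:separated-transfer}, applied with \(T = \Phi_{rk}\) and \(\rho = 2\), shows that the images are still \(2\)-separated. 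It also shows that their \(\alpha\)-neighborhoods are pairwise disjoint for every \(\alpha < 1\).

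The conclusion follows from \(k \ge 2\). Since the neighborhoods are disjoint, no point \(x \in \ell_\infty^{2^{rk}}\) lies in all of them. Hence
\[
        \max_{i} \dist{x}{\conv \Phi_{rk}(S_i)} \ge 1 = \tfrac12 D_{\vtv}.
\]
Dividing by \(D = 2\) gives \(\vtv_{\ell_\infty^{2^{rk}}}(r,k) \ge \tfrac12\).

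No step here is a real obstacle. The one point that needs checking is that the distances from points of the whole ambient space \(\ell_\infty^{2^{rk}}\) are controlled, and not only distances from points of the embedded copy of \(\ell_1^{rk}\). \Href{Lemma}{lem:separated-transfer} handles exactly this, because it bounds the distance between the transferred hulls themselves. The case \(p=1\) also needs no limiting argument, since \(\Phi_{rk}\) is an exact isometry.
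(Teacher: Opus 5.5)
Your proposal is correct and is the paper's own argument. Like the paper, you push the \(p=1\) block-coordinate Tverberg configuration into \(\ell_\infty^{2^{rk}}\) via the isometry \(\Phi_{rk}\) and invoke \Href{Lemma}{lem:tverberg-model} together with \Href{Lemma}{lem:separated-transfer}; you just spell out the steps (injectivity, linearity of convex hulls, exterior points) that the paper leaves implicit.
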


\begin{proof}
Apply the isometric embedding \(\Phi_{rk}:\ell_1^{rk} \to \ell_\infty^{2^{rk}}\)
to the block-coordinate Tverberg configuration from
\Href{Model example}{modelex:tverberg-coordinate} with \(p = 1\).  The embedding
preserves the relevant distances and diameters, and the model lower bound is
\(1/2\).
\end{proof}

If \(d = 2^{rk}\), then \(rk = \frac{\ln d}{\ln 2}\).  Hence the same
phenomenon occurs for colorful Tverberg: the error need not be small when the
number of sampled color classes is only logarithmic in the dimension.

\begin{lem}[Selection lower bounds in cubes]
\label{lem:cube-selection}
Let \(r \in \N\), let \(0 < \theta < 1\), and choose \(N \in \N\) such that
\[
        N \ge 2r
        \qquad\text{and}\qquad
        \frac{r}{N} < \theta .
\]
Then
\[
        \vsel_{\ell_\infty^{2^N}}(r,\theta)
        \ge
        \frac{1}{2}.
\]
\end{lem}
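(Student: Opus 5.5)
We argue exactly as in the proof of \Href{Theorem}{thm:selection-sup-type} with \(p_X = 1\), replacing the almost isometric embedding from \Href{Proposition}{prp:MP} by the isometric embedding \(\Phi_N : \ell_1^N \to \ell_\infty^{2^N}\). Since \(\Phi_N\) is an exact isometry, no \(\delta\)-loss occurs. Put \(P = \Phi_N(P_N) = \braces{\Phi_N e_1,\ldots,\Phi_N e_N}\), where \(P_N\) is the coordinate selection configuration from \Href{Model example}{modelex:selection-coordinate} with \(p = 1\). Then \(\card P = N \ge r\) and \(\diam P = D_{\vsel} = 2\). By \Href{Lemma}{lem:selection-model} with \(p=1\), for disjoint \(r\)-sets \(A,B\subset[N]\) we have \(\dist{\Delta_A}{\Delta_B} = 2\). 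Applying \Href{Lemma}{lem:separated-transfer} to \(T = \Phi_N\) then gives \(\dist{\Phi_N(\Delta_A)}{\Phi_N(\Delta_B)} \ge 2\).

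Fix \(0<\gamma<\frac12\) and any \(x_0 \in \ell_\infty^{2^N}\). Let \(\mathcal A\) be the family of all \(A \in \binom{[N]}{r}\) for which \(\Phi_N(\Delta_A)\) meets the ball \(x_0 + 2\gamma\,\ball{}_{\ell_\infty^{2^N}}\). If \(A,B\in\mathcal A\) were disjoint, the triangle inequality would give \(\dist{\Phi_N(\Delta_A)}{\Phi_N(\Delta_B)} \le 4\gamma < 2\), which contradicts the previous paragraph. Hence \(\mathcal A\) is an intersecting family of \(r\)-subsets of \([N]\).

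Since \(N \ge 2r\), the Erd\H{o}s--Ko--Rado theorem gives
\[
        \card{\mathcal A}
        \le
        \binom{N-1}{r-1}
        =
        \frac{r}{N}\binom{N}{r}
        <
        \theta\binom{N}{r}.
\]
Because \(\Phi_N\) is injective, the \(r\)-point subsets of \(P\) correspond bijectively to \(\binom{[N]}{r}\), and their convex hulls are exactly the sets \(\Phi_N(\Delta_A)\). Thus no ball of radius \(\gamma\diam P\) pierces a \(\theta\)-fraction of the convex hulls of \(r\)-point subsets of \(P\). This gives \(\vsel_{\ell_\infty^{2^N}}(r,\theta)\ge\gamma\), and letting \(\gamma\to\frac12\) yields the claim.

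No step here is a genuine obstacle. The one point to check is that the centers \(x_0\) range over all of \(\ell_\infty^{2^N}\) and not only over the image of \(\Phi_N\). \Href{Lemma}{lem:separated-transfer} was stated precisely to handle this, and the triangle-inequality argument above is carried out in the ambient space \(\ell_\infty^{2^N}\), so it covers arbitrary centers.
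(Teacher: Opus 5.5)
Your proof is correct and follows the paper's argument exactly: push the $p=1$ coordinate selection configuration through the isometric embedding $\Phi_N$, and use \Href{Lemma}{lem:selection-model} and \Href{Lemma}{lem:separated-transfer} to show that the family pierced by a ball of radius $\gamma\diam P$ is intersecting. The Erd\H{o}s--Ko--Rado theorem then finishes the proof. You have simply written out in full the steps that the paper compresses into ``repeating the proof of \Href{Theorem}{thm:selection-sup-type}.''
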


\begin{proof}
Apply \(\Phi_N:\ell_1^N \to \ell_\infty^{2^N}\) to the coordinate selection
configuration from \Href{Model example}{modelex:selection-coordinate} with
\(p = 1\).  Repeating the proof of \Href{Theorem}{thm:selection-sup-type} with
this isometric embedding, and using the Erd\H{o}s--Ko--Rado theorem exactly as
there, gives the claim.
\end{proof}

For fixed \(\theta\), one may choose \(N\) proportional to \(r\).  Therefore the
selection radius in a cube may be bounded below by \(1/2\) when \(r\) is
logarithmic in the dimension \(2^N\).

\section{Proofs of the theorems from the introduction}
\label{sec:intro-proofs}

\begin{proof}[Proof of \Href{Theorem}{thm:intro-qualitative}]
Assume first that \(X\) has non-trivial type.  The no-dimensional
Carath\'eodory theorem follows from Maurey's lemma.  The no-dimensional colorful Radon theorem, colorful
Tverberg theorem, selection lemma, and weak \(\eps\)-net theorem in Banach
spaces of non-trivial type were proved in \cite{ivanov2021no}.  Finally,
no-dimensional Helly theorem follows from
\cite{ivanov2026banachspaceshellyapproximation}.

Conversely, suppose that \(X\) has trivial type.  Then the no-dimensional
Carath\'eodory theorem fails by \Href{Theorem}{thm:caratheodory-sup-type}; the
colorful Radon theorem fails by \Href{Corollary}{cor:radon-trivial-type}; the
colorful Tverberg theorem fails by \Href{Corollary}{cor:tverberg-trivial-type};
and the selection lemma fails by \Href{Corollary}{cor:selection-trivial-type}.
The weak \(\eps\)-net theorem fails by \Href{Theorem}{thm:weak-net-trivial-type}.
For Helly, trivial type of \(X\) implies trivial type of \(X^*\), again by the
self-duality of non-trivial type.  Thus, the no-dimensional Helly theorem fails
by \Href{Corollary}{cor:helly-trivial-type}.  This proves all equivalences.
\end{proof}

\begin{proof}[Proof of \Href{Theorem}{thm:intro-quantitative}]
The Carath\'eodory estimate is \Href{Theorem}{thm:caratheodory-sup-type}.  The
Radon, Tverberg, and selection estimates are
\Href{Theorem}{thm:radon-sup-type}, \Href{Theorem}{thm:tverberg-sup-type}, and
\Href{Theorem}{thm:selection-sup-type}, respectively.  If \(X\) has type
\(p_X\), these lower bounds have the same powers as the upper estimates recalled
in the introduction, and hence those powers are optimal.
\end{proof}

\begin{proof}[Proof of \Href{Theorem}{thm:intro-helly-quantitative}]
This is exactly \Href{Theorem}{thm:helly-sup-type}.  If the supremal type of
\(X^*\) is attained, the lower bound from \Href{Theorem}{thm:helly-sup-type}
has the same power as the upper bound from
\cite{ivanov2026banachspaceshellyapproximation}.
\end{proof}


\end{document}